\documentclass{amsart}
\usepackage{amsthm,amssymb,amsmath,dsfont,bigdelim}
\usepackage{graphicx}
\usepackage{float}
\usepackage[arc,all]{xy}
\usepackage{longtable}
\vfuzz2pt 
\hfuzz2pt 
\newtheorem{thm}{Theorem}[section]

\newtheorem{lem}[thm]{Lemma}
\newtheorem{prop}[thm]{Proposition}
\theoremstyle{definition}
\newtheorem{defn}[thm]{Definition}

\newtheorem*{proof M}{\textbf{Proof of  Main Theorem}}
\theoremstyle{remark}

\numberwithin{equation}{section}

\begin{document}
\title[ superbreath]{On the nilpotent Lie superalgebras of small superbreadth  }%
\author[A. Shamsaki]{afsaneh shamsaki}%
\address{School of Mathematics and Computer Science,
Damghan University, Damghan, Iran}
\email{Shamsaki.afsaneh@yahoo.com}%
\author[P. Niroomand]{Peyman Niroomand}\thanks{Corresponding Author: Peyman Niroomand}
\address{School of Mathematics and Computer Science,
Damghan University, Damghan, Iran}
\email{niroomand@du.ac.ir, p$\_$niroomand@yahoo.com}
\author[M. Ladra]{Manuel Ladra}%
\address{Department of Mathematics, Institute of Mathematics, Universidade de Santiago de Compostela, 15782Santiago de Compostela, Spain }
\email{manuel.ladra@usc.es}
\keywords{ nilpotent Lie superalgebra, superbreadth}%
\subjclass{17B30, 17B05, 17B99}

\begin{abstract}
In this paper, we classify  finite-dimensional nilpotent Lie superalgebras  of superbreadth   at most   two.
\end{abstract}
\maketitle
\section{ Introduction}
The classification of algebraic object is a classical question in mathematics. The classification of nilpotent Lie algebras has been attracted by many
authors. In general, classifying of all non-isomorphic nilpotent Lie algebras of finite-dimension is not easy and only  small dimension nilpotent Lie algebras of dimension at most $ 7 $ are characterized over complex numbers. Appearing many numbers of finite-dimensional Lie algebras in their classifications make difficult the problem of classifying finite-dimensional nilpotent Lie algebras.  Hence to make the classification 
attainable, many authors have to put on some conditions on nilpotent Lie algebras. One such condition is the breadth of  nilpotent Lie algebras. The breadth for a Lie algebra has been introduced by Leedham-Green, Neumann and Wiegold. The breadth $ b(L) $ of a Lie algebra $ L $ is defined to the maximum of the images of $ ad_x $ for all $ x\in L. $
The finite-dimensional nilpotent Lie algebras of breadth $ 1, $ $ 2 $ and $ 3 $ are characterized in \cite{b2,b3}. The concept of Lie superalgebras is a topic of interest in mathematical physics. Recently, many papers on the theory of Lie superalgebras tried to obtain well known facts of Lie algebras to the theory of Lie  superalgebras.   The classification of all finite-dimensional nilpotent Lie superalgebras is still an open problem as same as finite-dimensional 
nilpotent Lie algebras. \\
In this paper, we are going to characterize all finite-dimensional nilpotent Lie superalgebras of superbreadth at most two.\\ 
We recall some terminologies on Lie superalgebras. Put $ \mathbb{Z}_{2}=\lbrace \overline{0}, \overline{1} \rbrace $ as a field.
Let $ V_{\overline{0}} $ and $ V_{\overline{1}} $ be  vector spaces. Then a $ \mathbb{Z}_{2} $-graded vector space $ V $  is a direct sum of   $ V_{\overline{0}} $ and $ V_{\overline{1}} $ such that $ V=V_{\overline{0}}\oplus V_{\overline{1}}. $ Also, it is called a superspace.  Elements in $ V_{\overline{0}} $ and $ V_{\overline{1}} $ are called even and odd,  respectively. Non-zero elements of $ V_{\overline{0}} \cup V_{\overline{1}}$ are called homogeneous elements  for $ v\in V_\sigma $ such that $ \sigma \in \mathbb{Z}_2. $ Then $ | v |=\sigma $ is the degree of $ v. $ A superalgebra is a $ \mathbb{Z}_{2} $-graded algebra  $ V=V_{\overline{0}}\oplus V_{\overline{1}} $ (that is, if $ x\in V_{\alpha}, $ $ y\in V_{\beta}, $ $ \alpha, \beta\in V_{\alpha+\beta}, $ then $ xy\in V_{\alpha+\beta} $).  \\ 
A Lie superalgebra $ L=L_{\overline{0}}\oplus L_{\overline{1}} $ is a superalgebra with a bilinear map $ [\cdot, \cdot] : L\times L \rightarrow L $ satisfying in the following identities.
\begin{itemize}
\item[(i).] $[L_{\sigma_1}, L_{\sigma_2}]\subset L_{\sigma_1+\sigma_2}$ for $ \sigma_1, \sigma_2\in \mathbb{Z}_{2} $ ($\mathbb{Z}_{2}$-grading),
\item[(ii).] $ [x, y]=-(-1)^{\mid x\mid \mid y \mid} [y, x]$ (graded skew-symmetry),
\item[(iii).] $ (-1)^{\mid x\mid \mid z \mid} [x,[y, z]]+(-1)^{\mid y\mid \mid x \mid} [y,[z, x]]+ (-1)^{\mid z\mid \mid y \mid} [z,[x, y]]=0 $ (graded Jacobi identity) for all $ x, y, z\in L. $

\end{itemize}
The even part $ L_{\overline{0}} $ is a Lie algebra and $ L_{\overline{1}} $ is a $ L_{\overline{0}} $-module. If $ L_{\overline{1}}=0, $ then $ L $ is a Lie algebra. But in general a Lie superalgebra is not a Lie algebra. A Lie superalgebra without the even part is an abelian Lie superalgebra i.e $ [x,y]=0 $ for all $ x, y\in L. $ A subalgebra of $ L $ is a $ \mathbb{Z}_{2} $-graded vector subspace that is closed under bracket operation. A   $ \mathbb{Z}_{2} $-graded vector subspace $ I $ is a graded ideal of $ L $ if $ [I, L]\subseteq I. $ The center of $ L $
is  a graded ideal $ Z(L)=\lbrace z\in L \mid [z,x]=0~~ \text{for all } x\in L\rbrace.$ It is clear that if $ I $ and $ J $ are graded ideals of $ L, $ then so is $ [I, J]. $
\section{Classification of nilpotent Lie superalgebras with $  b(L)\leq 1$  }
In this section, we define the superbreadth $ b(L) $ of a  finite-dimensional  Lie superalgebra $ L $ and classify the structures of finite-dimensional nilpotent Lie superalgebras
$ L $ with $ b(L)\leq 1. $
\begin{defn}
Let $ L $ be a finite-dimensional Lie superalgebra. The  superbreadth $ b(L) $  is equal to  maximum of the dimension of the images of $ ad_{x} $ for all $ x\in L. $
\end{defn}
Let $ A $ be an ideal of $ L $ and $ b_{A}(x)=$rank~$ ad_{x}|_{A}. $ Then
\begin{equation*}
b_{A}(L)=\max \lbrace b_{A}(x) \mid x\in L \rbrace.
\end{equation*}
Throughout this paper when a Lie superalgebra $L = L_{\bar{0}} \oplus L_{\bar{1}}$ is of dimension $r + s$, in which $\dim L_{\bar{0}} = r$ and $\dim L_{\bar{1}} = s$, we write $\dim L = (r,s)$. Similarly for superbreadth $b(L).$\\
The following lemma and  proposition give a classification of finite-dimensional  nilpotent  Lie superalgebras $ L $ with $ b(L)\leq 1. $ 
\begin{lem}\label{l0}
Let $ L $ be a finite-dimensional  Lie superalgebra. Then $ L $ is an abelian Lie superalgebra  if and only if $ b(L)=(0,0). $
\end{lem}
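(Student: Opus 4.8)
The plan is to prove the biconditional by simply unwinding the definition of superbreadth in both directions, since the statement is, at bottom, a reformulation of the condition that every adjoint map vanishes. The central observation I would use is that $b(L)=(0,0)$ is equivalent to $\mathrm{ad}_x=0$ for all $x$, which in turn is equivalent to the bracket being identically zero.

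First I would handle the forward implication. Assuming $L$ is abelian, by definition $[x,y]=0$ for all $x,y\in L$. Hence for each $x$ the map $\mathrm{ad}_x=[x,\cdot]$ is the zero map, so its image is the trivial subspace $\{0\}$, whose even and odd parts both have dimension $0$. Taking the maximum over all $x\in L$ yields $b(L)=(0,0)$.

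For the converse, suppose $b(L)=(0,0)$. Since $b(L)$ is defined as a maximum of the dimensions $\dim \mathrm{Im}(\mathrm{ad}_x)$, and these dimensions are nonnegative in each graded component, the maximum being $(0,0)$ forces $\dim \mathrm{Im}(\mathrm{ad}_x)=(0,0)$ for every $x$. A graded subspace whose even and odd parts are both zero-dimensional is itself zero, so $\mathrm{Im}(\mathrm{ad}_x)=\{0\}$, i.e.\ $\mathrm{ad}_x=0$, for all $x$. Thus $[x,y]=0$ for every pair, and $L$ is abelian.

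The only point requiring a little care—and what I would flag as the main, albeit minor, obstacle—is the bookkeeping of the \emph{bigraded} breadth. One should first note that for homogeneous $x$ one has $\mathrm{ad}_x(L_\sigma)\subseteq L_{\sigma+|x|}$, so the image $\mathrm{Im}(\mathrm{ad}_x)$ is a $\mathbb{Z}_2$-graded subspace precisely when $x$ is homogeneous; it therefore makes sense to evaluate the breadth as a pair $(r,s)$ on such $x$. Since every element of $L$ is a sum of homogeneous ones and $\mathrm{ad}$ is linear in its subscript, the vanishing of $\mathrm{ad}_x$ on all homogeneous $x$ already forces $\mathrm{ad}_x=0$ for every $x$ by bilinearity. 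Once this reduction is recorded, both implications follow immediately from the definitions with no further computation.
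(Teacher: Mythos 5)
Your proof is correct and takes essentially the same route as the paper, which simply states that the lemma is clear from the definition of superbreadth; you have merely written out that definitional unwinding in full. Your side remark on the bigrading (that $\mathrm{Im}(\mathrm{ad}_x)$ is graded only for homogeneous $x$, while the vanishing of all $\mathrm{ad}_x$ follows by linearity) is a reasonable clarification but adds nothing beyond the paper's intended one-line argument.
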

\begin{proof}
The proof is clear by using the definition of superbreadth.
\end{proof}
\begin{prop}\label{pr1.1}
Let $ L $ be a finite-dimensional Lie superalgebra. Then $ b(L)=(r,s) $ such that $ r+s=1 $ if and only if $ \dim [L, L]=(n,m) $ such that $ m+n=1. $
\end{prop}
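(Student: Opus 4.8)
The plan is to prove both implications, treating the grading as a bookkeeping layer on top of the scalar statement ``superbreadth $1$ iff $\dim[L,L]=1$''. The observation that handles the $\mathbb{Z}_2$-grading for free is that $[L,L]$ is always a graded ideal (noted in the introduction), so a one-dimensional derived algebra is automatically homogeneous: it is either purely even, giving $\dim[L,L]=(1,0)$, or purely odd, giving $(0,1)$. Hence I only need to establish that the total superbreadth equals $1$ exactly when $\dim[L,L]=1$, and the pair refinement $(r,s)=(n,m)$ will follow at once.

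For the direction ($\Leftarrow$), suppose $\dim[L,L]=(n,m)$ with $n+m=1$, so $[L,L]=\langle e\rangle$ for a single homogeneous element $e$. Since $\mathrm{im}(ad_{x})=[x,L]\subseteq[L,L]$ for every $x\in L$, each $ad_{x}$ has rank at most $1$, hence $b(L)\le 1$; and because $[L,L]\neq 0$ some $ad_{x}$ is nonzero, so $b(L)=1$. As every nonzero image is the homogeneous line $\langle e\rangle$, the breadth pair is $(r,s)=(n,m)$, in particular $r+s=1$.

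For ($\Rightarrow$), assume $b(L)=(r,s)$ with $r+s=1$. Then $L$ is not abelian, so Lemma~\ref{l0} gives $[L,L]\neq 0$, i.e. $\dim[L,L]\ge 1$; the whole content is to show $\dim[L,L]\le 1$. I argue by contradiction. If $\dim[L,L]\ge 2$, then since the lines $[a,L]$ over homogeneous $a$ span $[L,L]$, two of them must be independent; so I can choose homogeneous $x_1,x_2$ with $[x_1,L]=\langle w_1\rangle$, $[x_2,L]=\langle w_2\rangle$ and $w_1,w_2$ linearly independent (each image is at most a line because $b(L)=1$, and equals that line once nonzero). Write $[x_i,y]=\phi_i(y)\,w_i$ for nonzero linear functionals $\phi_i$ on $L$, and split according to whether $\phi_1,\phi_2$ are proportional.

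If $\phi_1,\phi_2$ are linearly independent, the map $y\mapsto(\phi_1(y),\phi_2(y))$ is onto $\Cpx^2$, so I may pick $y_1,y_2$ with $(\phi_1(y_j),\phi_2(y_j))$ independent; then
\[
ad_{x_1+x_2}(y_j)=\phi_1(y_j)\,w_1+\phi_2(y_j)\,w_2
\]
are independent and $ad_{x_1+x_2}$ has rank $\ge 2$. If instead $\phi_2=c\phi_1$ with $c\neq 0$, choose a homogeneous $y_0$ with $\phi_1(y_0)\neq 0$ (possible since homogeneous elements span $L$); then $\phi_2(y_0)\neq 0$ too, so by graded skew-symmetry $[y_0,x_1]$ and $[y_0,x_2]$ are nonzero multiples of $w_1$ and $w_2$, and $ad_{y_0}$ has rank $\ge 2$. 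Either way $b(L)\ge 2$, a contradiction, so $\dim[L,L]=1$, which is homogeneous as remarked. The step I expect to be delicate is the first branch: it tests breadth against $x_1+x_2$, which need not be homogeneous, so it is legitimate only because $b(L)$ is the maximum of $\dim\,\mathrm{im}(ad_{x})$ over \emph{all} $x\in L$ rather than over homogeneous $x$ alone. (Over homogeneous elements only the claim is false: a direct sum of an even and an ``odd'' Heisenberg superalgebra has every homogeneous $ad_{x}$ of rank $\le 1$ yet $\dim[L,L]=2$.) Thus keeping $x_1,x_2,y_0$ homogeneous while using the non-homogeneous test element $x_1+x_2$ is the point to set up carefully; the remaining computations are routine.
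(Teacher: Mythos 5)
Your proof is correct, and it takes a genuinely different (and in one respect more careful) route than the paper's. The paper proves the forward direction by splitting into the parity cases $b(L)=(0,1)$ and $b(L)=(1,0)$, deducing first that the same-parity brackets $[L_{\overline{0}},L_{\overline{0}}]$ and $[L_{\overline{1}},L_{\overline{1}}]$ vanish, then choosing two independent bracket values $[x_0,x_1]=z_1$, $[y_0,y_1]=w_1$ and testing the single element $x_0+y_0$; at the crucial step it asserts $[x_0,y_1]=0$ and $[y_0,x_1]=0$, whereas breadth one only forces $[x_0,y_1]\in\langle z_1\rangle$ and $[y_0,x_1]\in\langle w_1\rangle$, and if $[x_0,y_1]=\beta z_1$ and $[y_0,x_1]=\alpha w_1$ with $\alpha\beta=1$ the two images $z_1+\alpha w_1$ and $\beta z_1+w_1$ of $\mathrm{ad}_{x_0+y_0}$ are proportional, so the paper's test element alone does not yield the contradiction. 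This degenerate configuration is exactly what your dichotomy on the functionals $\phi_1,\phi_2$ is built to catch: when they are independent your element $x_1+x_2$ works, and when they are proportional you switch to a homogeneous element $y_0$ and graded skew-symmetry, which disposes of the remaining case (in the paper's notation, $\mathrm{ad}_{x_1}$ itself already has rank two there). Structurally, you isolate the grading in the single observation that the one-dimensional graded ideal $[L,L]$ is homogeneous and then run an ungraded rank argument, which buys uniformity (no $(0,1)$ versus $(1,0)$ split), works over any field, and is supported by your counterexample showing that superbreadth must be tested on non-homogeneous elements — the legitimacy of using $x_1+x_2$. The paper's parity-based route, by contrast, produces along the way the vanishing statements for same-parity brackets that it echoes in later sections. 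The converse direction is the same in both proofs: $b(L)\le\dim[L,L]$ together with Lemma \ref{l0}.
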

\begin{proof}
Let $ L $ be a finite-dimensional Lie superalgebra with $ b(L)=(r,s) $ such that  $ r+s=1. $ Then $ b(L) $ is equal to $ (0,1) $ or $ (1,0). $ 
First, let $ b(L)=(0,1). $ Then $ \dim [L,L]=\dim [L_{\overline{0}}, L_{\overline{1}}] \neq 0$ and $ \dim [L_{\overline{0}}, L_{\overline{0}}]=\dim [L_{\overline{1}}, L_{\overline{1}}]=0. $ On the contrary,  assume that $ \dim [L_{\overline{0}}, L_{\overline{1}}] \geq 2.$  Since  $ \dim [L_{\overline{0}}, L_{\overline{1}}] \geq 2,$  we can consider $ x_0, y_0\in L_{\overline{0}}, $  $ x_1, y_1\in L_{\overline{1}} $ and $ [x_0, x_1]=z_1, $ $  [y_0, y_1]=w_1 $ such that  the set $ \lbrace z_1, w_1\rbrace \subseteq  L_{\overline{1}} $ is  linearly independent. Since $ b(L)=(0,1), $ we have 
\begin{equation*}
b(x_1)=b(y_1)=b(x_0)=b(y_0)=(0, 1). 
\end{equation*}
Hence $[x_0, y_1]=0 $ and  $  [y_0, x_1]=0. $ Then $ [x_0+y_0, x_1]=z_1 $ and  $[x_0+y_0,y_1]=w_1 $ imply $ b(x_0+y_0)=(n', m')$ such that $n'+m'\geq 2. $ It is a contradiction. Therefore $ \dim [L, L]=(0,1). $ If $ b(L)=(1,0), $ then $ \dim [L, L]=(1,0) $ by using  a similar method.\\  
Conversely, assume that $ \dim [L, L]=(0, 1). $ Then $ \dim [L_{\overline{0}}, L_{\overline{0}}]=\dim [L_{\overline{1}}, L_{\overline{1}}]=0 $ and $ \dim [L_{\overline{0}}, L_{\overline{1}}]\neq 0. $ Also, we have 
 $ b(L)\leq \dim [L,L]=\dim[L_{\overline{0}}, L_{\overline{1}}]=1  $ by using the definition of superbreadth. Since $ L $ is a non-abelian Lie superalgebra,  $ b(L)\neq (0,0) $ by using Lemma \ref{l0}. Since   $ \dim [L,L]=\dim [L_{\overline{0}}, L_{\overline{1}}] \neq 0$ and $ \dim [L_{\overline{0}}, L_{\overline{0}}]=\dim [L_{\overline{1}}, L_{\overline{1}}]=0, $  we have  $ b(L)=(0,1). $  If $ \dim [L, L]=(1,0),$  then $ b(L)=(1,0) $  by using a similar way. Therefore the result follows.
\end{proof}
The next theorem determines the structure of  finite-dimensional  nilpotent Lie superalgebras $ L $ with $ b(L)=(r,s) $ such that $ r+s=1. $
\begin{thm}
Let $ L $ be a finite-dimensional  nilpotent Lie superalgebra and   $ b(L)=(r, s). $ Then  $ r+s=1 $ if and only if  $ L $ is isomorphic to one of the   following nilpotent Lie superalgebras
\[ H_e=\langle x_1, \dots, x_m, x_{m+1}, \dots, x_{2m}, z \rangle \oplus \langle y_1, \dots, y_n  \rangle \oplus A(k)
\] 
with $ [x_i, x_{i+m}]=z $ and $  [y_j, y_j]=z  $ for all  $ i, $  $ j $ such that $1 \leq i\leq m, 1\leq j\leq n $   
or 
\[H_o=\langle x_1, \dots, x_{m}  \rangle \oplus \langle y_1,\dots, y_m, z  \rangle \oplus A(k)
\] 
with $ [x_i, y_i]=z$ for all  $ i $   such that $1 \leq i\leq m, $ where $ A(k) $ is an abelian Lie superalgebra for $ k\geq 0. $ 
\end{thm}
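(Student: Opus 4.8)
The plan is to route everything through Proposition \ref{pr1.1} and then reduce the problem to the linear algebra of a single scalar-valued graded bilinear form. By Proposition \ref{pr1.1}, the condition $r+s=1$ is equivalent to $\dim[L,L]=(n,m)$ with $n+m=1$, so it suffices to show that $\dim[L,L]\in\{(1,0),(0,1)\}$ holds precisely when $L\cong H_e$ or $L\cong H_o$. The easy ($\Leftarrow$) direction is a direct computation: in each model the only nonzero brackets land in $\langle z\rangle$, so $[L,L]=\langle z\rangle$ is one-dimensional and central (even for $H_e$, odd for $H_o$), whence $\dim[L,L]$ equals $(1,0)$ or $(0,1)$ and the conclusion follows from Proposition \ref{pr1.1}.

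For the ($\Rightarrow$) direction I would first observe that nilpotency forces the one-dimensional ideal $[L,L]=\langle z\rangle$ to be central. Indeed, the lower central series of a nonzero nilpotent superalgebra strictly decreases until it reaches $0$, so $[[L,L],L]\subsetneq[L,L]$; since $[L,L]$ is one-dimensional this gives $[[L,L],L]=0$ and $z\in Z(L)$. Writing each bracket as $[a,b]=\beta(a,b)\,z$ then defines a scalar-valued bilinear form $\beta$ on $L$, and the graded skew-symmetry identity $[a,b]=-(-1)^{|a||b|}[b,a]$ translates into symmetry properties of $\beta$ on the homogeneous pieces. The argument then splits according to the parity of $z$.

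If $z$ is even ($\dim[L,L]=(1,0)$), then $[L_{\overline{0}},L_{\overline{1}}]\subseteq L_{\overline{1}}\cap[L,L]=0$, so $\beta$ decouples into an \emph{alternating} form on $L_{\overline{0}}$ and a \emph{symmetric} form on $L_{\overline{1}}$ (symmetric because the sign rule gives $[y,y]=[y,y]$ for odd $y$, so $[y,y]$ need not vanish). Putting the alternating form in symplectic normal form produces the Heisenberg basis $x_1,\dots,x_{2m}$ with $[x_i,x_{i+m}]=z$, while diagonalizing the symmetric form over $\mathbb C$ produces $y_1,\dots,y_n$ with $[y_j,y_j]=z$; the radicals of the two forms are central and assemble into the abelian summand $A(k)$, yielding $H_e$. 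If $z$ is odd ($\dim[L,L]=(0,1)$), then $[L_{\overline{0}},L_{\overline{0}}]=[L_{\overline{1}},L_{\overline{1}}]=0$, so $\beta$ is merely a bilinear pairing between $L_{\overline{0}}$ and $L_{\overline{1}}$; bringing this pairing to rank normal form gives bases $x_1,\dots,x_m\in L_{\overline{0}}$ and $y_1,\dots,y_m\in L_{\overline{1}}$ with $[x_i,y_i]=z$, and the left and right kernels again feed into $A(k)$, yielding $H_o$.

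The main obstacle is the odd--odd interaction in the even-$z$ case: the graded sign rule makes $\beta|_{L_{\overline{1}}}$ symmetric rather than alternating, which is exactly why the odd generators are governed by a symmetric form and produce the relations $[y_j,y_j]=z$ rather than symplectic pairs. Two further points require care. First, the diagonalization of the symmetric form uses that the ground field is algebraically closed of characteristic $\neq 2$ (so that each nonzero diagonal value can be normalized to give $[y_j,y_j]=z$). Second, one must check that the complements chosen to the radicals split $A(k)$ off as a genuine graded direct summand; together with the nilpotency-forces-centrality step and the parity bookkeeping of which brackets can be nonzero, these are the places where the reasoning must be pinned down carefully.
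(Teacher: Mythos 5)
Your proposal is correct, but it takes a genuinely different route from the paper on the structural half of the theorem. The paper's own proof is two lines: it applies Proposition \ref{pr1.1} to get $\dim[L,L]=(n_1,m_1)$ with $n_1+m_1=1$, and then simply cites three external results (\cite[Lemma 2.2]{sh}, \cite[Page 4]{Na2}, \cite[Proposition 1]{Rodr}) to conclude $L\cong H_e$ or $L\cong H_o$, declaring the converse clear. You share the first step exactly, but instead of outsourcing the second step you inline a complete proof: nilpotency forces the one-dimensional derived ideal $\langle z\rangle$ to be central (this is the right use of nilpotency --- without it a one-dimensional derived algebra need not be central, as the two-dimensional non-abelian Lie algebra shows), and then the bracket becomes a scalar-valued graded bilinear form whose parity analysis splits into the alternating/symmetric pair on $L_{\overline{0}}$ and $L_{\overline{1}}$ when $z$ is even, and an even--odd pairing when $z$ is odd; symplectic, diagonal, and rank normal forms then produce exactly the bases of $H_e$ and $H_o$, with the radicals (minus $\langle z\rangle$) giving $A(k)$. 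This is essentially the content of the Heisenberg-superalgebra classification the paper cites, so your argument buys self-containedness, and it has the additional virtue of making explicit the hypotheses the paper leaves implicit in its citations: the ground field must be algebraically closed (or at least admit the needed square roots) of characteristic $\neq 2$, both to normalize the diagonal values $[y_j,y_j]=z$ and to make the even form alternating via $2[x,x]=0$. The trade-off is length; the paper's version is shorter but leaves the reader to reassemble the structure theorem from three references, one of which is listed as in preparation. Your two flagged caveats (splitting $A(k)$ off as a graded complement of the radicals, and the odd--odd symmetry sign) are exactly the points that need pinning down, and both go through without obstruction.
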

\begin{proof}
Let $ L $ be a Lie superalgebra with $ b(L)=(r, s) $ such that $ r+s=1. $  Then $ \dim [L,L]=(n_1, m_1) $ such that $  n_1+m_1=1 $ by using Proposition \ref{pr1.1}. Hence $ L $ is isomorphic to $ H_e $ or $ H_o $ by using  \cite[Lemma 2.2]{sh}, \cite[Page 4]{Na2} and \cite[Proposition 1]{Rodr}. The converse  is clear.
\end{proof}
The following theorem  is  used  in the next section.
\begin{thm}\label{th2.6}
Let $ L $ be a finite-dimensional  Lie superalgebra with $ b(L)=(r,s) $ such that $ r+s=n>0. $  Then there exists $ x\in L $ such that $ b(x)=n $ and 
\begin{itemize}
\item[(i).] if $ [x, x]\neq 0, $ then $ \dim L/Z(L)\geq n, $
\item[(ii).] if $ [x, x]= 0, $ then $ \dim L/Z(L)\geq n+1. $
\end{itemize}
\end{thm}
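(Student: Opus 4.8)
The plan is to combine a rank--nullity count for the adjoint operator with the elementary containment $Z(L)\subseteq C_L(x)$, and then to detect the extra dimension in case (ii) from the element $x$ itself. First I would use the definition of $b(L)$ directly: since $b(L)=(r,s)$ with $r+s=n$ and $L$ is finite-dimensional, the maximum is attained, so there is a (homogeneous) element $x\in L$ whose adjoint $ad_x\colon L\to L$ has image of total dimension $n$; this is the $x$ promised in the statement. Setting $C_L(x)=\ker(ad_x)=\{\,y\in L : [x,y]=0\,\}$, rank--nullity for the linear map $ad_x$ gives
\begin{equation*}
\dim L=\dim C_L(x)+\dim\operatorname{im}(ad_x)=\dim C_L(x)+n .
\end{equation*}
All quantities here are ordinary vector-space dimensions, so the grading does not interfere with this identity.

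Next I would record the containment $Z(L)\subseteq C_L(x)$: if $z\in Z(L)$ then $[z,w]=0$ for all $w\in L$, and graded skew-symmetry converts $[z,x]=0$ into $[x,z]=0$, so $z\in C_L(x)$. Hence $\dim Z(L)\le\dim C_L(x)$, and substituting into the identity above yields
\begin{equation*}
\dim L/Z(L)=\dim L-\dim Z(L)\ge \dim L-\dim C_L(x)=n .
\end{equation*}
This proves (i), and in fact holds with no assumption on $[x,x]$; the hypothesis $[x,x]\neq 0$ in (i) simply records the case in which this bare bound is all one can extract.

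For (ii) the point is to promote the containment $Z(L)\subseteq C_L(x)$ to a strict one, which gains exactly the one dimension needed. If $[x,x]=0$ then $x\in C_L(x)$, while $x\notin Z(L)$ because $b(x)=n>0$ forces $ad_x\neq 0$ and hence $[x,y]\neq 0$ for some $y\in L$. Thus $x$ exhibits $Z(L)\subsetneq C_L(x)$, so $\dim Z(L)\le\dim C_L(x)-1$, and the displayed inequality improves to $\dim L/Z(L)\ge n+1$.

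I do not expect a serious obstacle here; the argument is essentially two applications of rank--nullity together with the separating element $x$. The only points needing care are that the breadth-maximizer $x$ can be chosen homogeneous, so that the dichotomy $[x,x]\neq 0$ versus $[x,x]=0$ is meaningful (note that for even $x$ skew-symmetry already forces $[x,x]=0$, so case (i) concerns odd elements), and that the gain of $+1$ in (ii) is genuinely tied to $x$ lying in $C_L(x)\setminus Z(L)$, a membership available precisely when $[x,x]=0$.
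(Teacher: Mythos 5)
Correct, and essentially the paper's own argument in dual form: the paper verifies directly that the cosets $x_i+Z(L)$ of preimages of a basis of $\mathrm{im}\, ad_{x}$ are linearly independent in $L/Z(L)$ (adjoining $x+Z(L)$ when $[x,x]=0$), which is exactly your rank--nullity identity $\dim L-\dim \ker ad_{x}=n$ combined with the inclusion $Z(L)\subseteq \ker ad_{x}$, strict in case (ii) because $x\in \ker ad_{x}\setminus Z(L)$; your remark that (i) needs no hypothesis on $[x,x]$ is likewise implicit in the paper's proof. One small caution: your parenthetical claim that the maximizer $x$ can be chosen homogeneous is unjustified---the paper's Lemma \ref{ll} and Case 2 of Proposition \ref{pr3.6} show that elements realizing the full superbreadth may have to be non-homogeneous---but since neither the statement of the theorem nor any step of your argument uses homogeneity, you should simply drop that remark and nothing breaks.
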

\begin{proof}
Let $ L $ be a finite-dimensional Lie superalgebra  with $ b(L)=(r,s)$ such that $ r+s=n>0. $ Then there exists $ x\in L $ such that $ b(x)=n. $ Since $ b(x)=n, $ there are $ x_1, \dots, x_n \in L$ such that the set $ \lbrace z_i \mid   [x, x_i]=z_i ~~\text{ for all}~~  1\leq i\leq n \rbrace$ is linearly independent. If $ [x,x]\neq 0, $ then we show that $ \lbrace x_i+Z(L) \mid 1\leq i\leq n \rbrace $ is linearly independent. Let
\begin{equation*}
\alpha_1 (x_1+Z(L))+ \dots + \alpha_n (x_n+Z(L))=Z(L)
\end{equation*}
for all $ \alpha_1, \dots, \alpha_n \in \mathbb{F}. $ Hence $ \alpha_1 x_1+ \dots + \alpha_n x_n \in Z(L)$ and
\begin{align*}
0&=[x, \alpha_1 x_1+\dots + \alpha_{n}x_{n}] \cr
&=\alpha_1[x,x_1]+\dots +\alpha_{n}[x, x_{n}] \cr
&=\alpha_1 z_1+\dots + \alpha_{n} z_{n}.
\end{align*}
Since the set $ \lbrace z_i \mid  1\leq i\leq n \rbrace $ is  linearly independent, we have $ \alpha_1=\dots =\alpha_{n}=0. $ Therefore 
$ \lbrace x_i+Z(L) \mid  1\leq i\leq n \rbrace $ is  linearly independent and so $ \dim L/Z(L)\geq n. $ If $ [x,x]=0, $ then $ \lbrace x+Z(L), x_i+Z(L) \mid 1\leq i\leq n \rbrace $ is linearly independent by using a similar method and so $ \dim L/Z(L)\geq n+1. $ 
\end{proof}
\section{Classification of Lie superalgebras  $ L $ with $  b(L)=2$ }
In this section, we obtain the classification of finite-dimensional nilpotent Lie superalgebras  $ L $ with $ b(L)=2. $ We start with the following lemmas. 
\begin{lem}\label{lem3.1}
Let $ L $ be a finite-dimensional nilpotent Lie superalgebra with $ b(L)=(0,r) $ such that $ r\geq 2 $ and $ A $ be a maximal abelian ideal of $ L. $ Then $ C_{L}(A)=A. $
\end{lem}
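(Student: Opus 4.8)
The plan is to argue by contradiction: whenever $C_{L}(A)\neq A$, I will manufacture an abelian graded ideal strictly containing $A$, violating maximality. First I would record the routine setup. Since $A$ is abelian, $A\subseteq C_{L}(A)$; and since $A$ is a graded ideal, a degree comparison on $[x,a]$ (for homogeneous $a\in A$ and $x=x_{\overline{0}}+x_{\overline{1}}$) shows $C_{L}(A)$ is a graded subspace. The graded Jacobi identity then upgrades it to a graded ideal: for $x\in C_{L}(A)$, $y\in L$, $a\in A$ one has $[y,a]\in A$, so both $[x,[y,a]]$ and $[a,x]$ vanish, forcing $[[x,y],a]=0$. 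I would also extract from the hypothesis $b(L)=(0,r)$ its essential structural content, namely that the even part of every image $[x,L]$ is zero: applying this to even $x$ (where $[x,L_{\overline{0}}]\subseteq L_{\overline{0}}$) gives $[L_{\overline{0}},L_{\overline{0}}]=0$, and applying it to odd $x$ (where $[x,L_{\overline{1}}]\subseteq L_{\overline{0}}$) gives $[L_{\overline{1}},L_{\overline{1}}]=0$. In particular $[w,w]=0$ for every odd $w$.

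Next, assume toward a contradiction that $C_{L}(A)\supsetneq A$. Then $C_{L}(A)/A$ is a nonzero graded ideal of the nilpotent Lie superalgebra $L/A$. Using the standard fact that a nonzero graded ideal $I$ of a nilpotent Lie superalgebra meets its center (the last nonzero term of the descending chain $I\supseteq[L,I]\supseteq[L,[L,I]]\supseteq\cdots$ is a nonzero graded central subspace contained in $I$), the graded subspace $(C_{L}(A)/A)\cap Z(L/A)$ is nonzero and hence contains a nonzero homogeneous element $x+A$. Lifting, I obtain a homogeneous $x\in C_{L}(A)\setminus A$ with $[x,L]\subseteq A$.

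Finally I would set $B=A+\mathbb{F}x$. This is a graded subspace properly containing $A$, and it is an ideal since $[B,L]\subseteq[A,L]+[x,L]\subseteq A\subseteq B$. To see that $B$ is abelian, note $[A,A]=0$, that $[x,A]=[A,x]=0$ because $x\in C_{L}(A)$, and that $[x,x]=0$: if $x$ is even this is graded skew-symmetry, while if $x$ is odd it follows from $[L_{\overline{1}},L_{\overline{1}}]=0$ established above. Thus $B$ is an abelian graded ideal strictly containing $A$, contradicting the maximality of $A$; hence $C_{L}(A)=A$.

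The step I expect to be the genuine obstacle, and the only place where the argument departs from the classical Lie-algebra proof, is verifying $[x,x]=0$ for the odd central lift $x$, since odd elements may have nonzero self-bracket in general. The hypothesis $b(L)=(0,r)$ is precisely what removes this difficulty, through the vanishing of $[L_{\overline{1}},L_{\overline{1}}]$.
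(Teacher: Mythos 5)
Your proof is correct and follows essentially the same route as the paper: assume $x\in C_{L}(A)\setminus A$, use nilpotency to arrange $[L,x]\subseteq A$ (the paper cites Engel's theorem where you run the descending chain argument explicitly), deduce $[x,x]=0$ from $b(L)=(0,r)$ forcing $[L_{\overline{0}},L_{\overline{0}}]=[L_{\overline{1}},L_{\overline{1}}]=0$, and contradict maximality with the abelian ideal $A+\mathbb{F}x$. Your added verifications (that $C_{L}(A)$ is a graded ideal, and the homogeneous choice of the central lift) are careful touches the paper omits, but they do not change the argument.
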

\begin{proof}
We know that $ A\subseteq C_{L}(A). $ On the contrary, let there exist $ x\in  C_{L}(A)\setminus A.$ Since $ C_{L}(A) $ is an ideal of $ L $ and $ L $ is nilpotent, $ L $ acts nilpotency on $ C_{L}(A)  /A.$ Now, by using Engel's Theorem we have $ [L, x] \in A.$ On the other hand, $ b(L)=(0,r) $ for $ r\geq 2 $ thus  $ \dim [L,L]=\dim [L_0, L_1] $ and so $ [x, x]=0. $ Hence  $ A'=\langle x, A \rangle $ is an abelian ideal of $ L $ such that $ A\subset A'. $ We get a contradiction. Therefore $ C_{L}(A)=A. $
\end{proof}
\begin{lem}\label{lem3.2}
Let $ L $ be a finite-dimensional  Lie superalgebra with $ b(L)=(r, 0)$  such that $ r\geq 2 $ and $ A $ be a maximal abelian ideal of $ L. $ If $ [L_{\overline{0}}, L_{\overline{0}}]=0, $ then $ [x, x]\neq 0 $ for all $ x\in L_{\overline{1}}
 \setminus A_{\overline{1}}. $
\end{lem}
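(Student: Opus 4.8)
The plan is to first translate the hypotheses into a concrete bracket structure and then run a maximality argument in the spirit of Lemma \ref{lem3.1}. Since $b(L)=(r,0)$ has vanishing odd part, the odd component of the image of every $ad_{y}$ must be zero; for even $y$ this component is $[y,L_{\overline{1}}]$, so we get $[L_{\overline{0}},L_{\overline{1}}]=0$. Together with the hypothesis $[L_{\overline{0}},L_{\overline{0}}]=0$ this yields $[L_{\overline{0}},L]=0$, hence $L_{\overline{0}}\subseteq Z(L)$. Consequently the only surviving brackets are the odd--odd ones, so the whole multiplication is encoded by the symmetric $L_{\overline{0}}$-valued form $\beta(u,v)=[u,v]$ on $L_{\overline{1}}$, whose image is $[L,L]=[L_{\overline{1}},L_{\overline{1}}]\subseteq L_{\overline{0}}$; here $r=\max_{x\in L_{\overline{1}}}\dim[x,L_{\overline{1}}]\geq 2$ records the maximal rank of the linear maps $\beta(x,\cdot)$.

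Next I would pin down $A$. As $Z(L)$ is an abelian ideal and any abelian ideal can be enlarged by $Z(L)$ while staying abelian, maximality forces $Z(L)\subseteq A$, so $A_{\overline{0}}=L_{\overline{0}}$ and $A=L_{\overline{0}}\oplus A_{\overline{1}}$. Because $[L_{\overline{0}},L]=0$, every such graded subspace is automatically an ideal, and it is abelian exactly when $\beta$ vanishes on $A_{\overline{1}}$; thus $A$ being a maximal abelian ideal is equivalent to $A_{\overline{1}}$ being a maximal subspace of $L_{\overline{1}}$ on which $\beta$ vanishes identically. In particular $[a,a]=0$ for every $a\in A_{\overline{1}}$, which confirms that the asserted property can only be tested on $L_{\overline{1}}\setminus A_{\overline{1}}$.

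Finally I would argue by contradiction. Suppose some $x\in L_{\overline{1}}\setminus A_{\overline{1}}$ satisfies $[x,x]=0$; the aim is to produce an abelian ideal strictly containing $A$, contradicting maximality. The natural candidate is $A+\langle x\rangle$: by the paragraph above it is a graded ideal since $[x,L]=[x,L_{\overline{1}}]\subseteq L_{\overline{0}}=A_{\overline{0}}\subseteq A$, and it is abelian precisely when $[x,A_{\overline{1}}]=0$. Establishing $[x,A_{\overline{1}}]=0$, i.e. that $x$ centralizes $A_{\overline{1}}$, is the heart of the matter, and this is the step where nilpotency (via Engel's theorem, exactly as in Lemma \ref{lem3.1}) and the hypothesis $r\geq 2$ must be brought in. I expect this to be the main obstacle: the condition $[x,x]=0$ by itself only says that $x$ is $\beta$-isotropic, and the isotropic vectors of a vector-valued symmetric form need not all lie inside a single maximal totally isotropic subspace, so some genuine input beyond isotropy is needed to force $x\in C_{L}(A_{\overline{1}})$. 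The delicate point is therefore to combine the rank bound guaranteed by $r\geq 2$ with the maximality of $A$ (and nilpotency) in order to control $[x,A_{\overline{1}}]$ and close the contradiction.
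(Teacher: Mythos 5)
Your reductions are correct and coincide with the paper's opening moves: $b(L)=(r,0)$ is read (here as in the paper) as forcing $[L_{\overline{0}},L_{\overline{1}}]=0$, whence $L_{\overline{0}}\subseteq Z(L)$, $Z(L)\subseteq A$ by maximality, $A=L_{\overline{0}}\oplus A_{\overline{1}}$, and the whole structure is encoded in the symmetric form $\beta$ on $L_{\overline{1}}$. But as a proof your proposal is incomplete: everything hinges on the claim $[x,A_{\overline{1}}]=0$, and you only announce that this ``must'' somehow follow from $r\geq 2$, maximality and nilpotency (which, note, is not even a hypothesis of this lemma), without supplying an argument. Identifying the obstacle is not the same as overcoming it, so there is a genuine gap at exactly the decisive step.

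That said, your diagnosis of the obstacle is sharper than the paper's own treatment of it. The paper's proof sets $H=\langle x,L_{\overline{0}}\rangle$, checks that $H$ is an abelian ideal, and then concludes $H\subseteq A$ ``since $A$ is a maximal abelian ideal''; this is a non sequitur, because maximality only forbids abelian ideals strictly containing $A$ and does not force $A$ to contain every abelian ideal. Your remark that the isotropic vectors of a vector-valued symmetric form need not lie in a single maximal totally isotropic subspace is precisely what defeats the statement as written: over $\mathbb{C}$ (or any field of characteristic $\neq 2$) take $L_{\overline{0}}=\langle z_1,z_2\rangle$ central and $L_{\overline{1}}=\langle u,v,w\rangle$ with $[u,v]=z_1$, $[u,w]=z_2$ and all other brackets zero. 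This $L$ is nilpotent with $[L_{\overline{0}},L_{\overline{0}}]=0$ and $b(L)=(2,0)$ (the rank of $ad_y$ is $2$ exactly when the $u$-coefficient of $y$ is nonzero, and the image always lies in $L_{\overline{0}}$), and one checks that $A=L_{\overline{0}}\oplus\langle u\rangle$ is a maximal abelian ideal, since adjoining any $\beta v+\gamma w\neq 0$ produces the nonzero bracket $\beta z_1+\gamma z_2$ with $u$; yet $v\in L_{\overline{1}}\setminus A_{\overline{1}}$ and $[v,v]=0$. So the step you could not close cannot be closed in this generality: the lemma is true, and is what Proposition \ref{Pr3.4} actually needs, only under the additional hypothesis $A=Z(L)$ (equivalently $b_{A}(L)=(0,0)$), and under that hypothesis your own plan terminates immediately, because $[x,A_{\overline{1}}]\subseteq [x,Z(L)]=0$ makes $A+\langle x\rangle$ an abelian ideal strictly containing $A$.
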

\begin{proof}
 On the contrary, let there exists $ x\in  L_{\overline{1}}
 \setminus A_{\overline{1}} $ such that $ [x,x]=0. $  Since $ b(L)=(r,0) $ for $ r\geq 2 $ thus $ \dim [L_{\overline{0}}, L_{\overline{1}}]=0$ and so $ [x, L_{\overline{0}}]=0. $   On the other hand, $ Z(L)\subseteq A $ and $ x\notin A,$ we have  
 $ [x,L_{\overline{1}}] \in L_{\overline{0}}. $ 
   Put $ H=\langle x, L_{\overline{0}}\rangle. $ Therefore $ H $ is an abelian ideal and since   $ A $ is a maximal abelian ideal of $ L, $ we have $ H\subseteq  A.$
Since $ x\notin A, $ it is a contradiction.  Hence $ [x,x]\neq 0 $ for all $ x\in L_{\overline{1}} \setminus A_{\overline{1}}.  $ 
\end{proof}
\begin{lem}\label{lem3.3}
Let $ L $ be a finite-dimensional  nilpotent Lie superalgebra with $ b(L)=(r, s)$ such that $ r\geq 1, $ $ s\geq 0 $  and $ A $ be a maximal abelian ideal of $ L $ with $ b_{A}(L)=(0,0). $ Then   $ \dim [L_{\overline{0}}, L_{\overline{0}}]=0. $
\end{lem}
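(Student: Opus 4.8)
The plan is to convert the breadth hypothesis into a statement about the centre and then run an ``enlarge the abelian ideal'' argument of the kind already used in Lemmas~\ref{lem3.1} and \ref{lem3.2}. First I would unwind $b_{A}(L)=(0,0)$: since $b_{A}(x)=\mathrm{rank}\, ad_{x}|_{A}$ for every $x\in L$, the hypothesis says $[x,A]=0$ for all $x$, i.e.\ $A\subseteq Z(L)$. As $A$ is a maximal abelian ideal and $Z(L)$ is an abelian ideal containing it, maximality forces $A=Z(L)$. Thus the lemma reduces to the following: if $L$ is nilpotent and its maximal abelian ideal coincides with $Z(L)$, then $L_{\overline{0}}$ is abelian.

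The basic tool is an extension principle: if $x\in L$ is homogeneous with $x\notin A$, $[x,L]\subseteq A$ and $[x,x]=0$, then $\langle x\rangle+A$ is a graded abelian ideal strictly containing $A$ (it is abelian because $[x,x]=0$, $[x,A]=0$ and $[A,A]=0$, and it is an ideal because $[L,x]\subseteq A$), contradicting the maximality of $A$. For even $x$ the condition $[x,x]=0$ is automatic, so no even $x\notin A$ can satisfy $[x,L]\subseteq A$; equivalently, every even element $x$ with $[x,L]\subseteq A$ already lies in $A$.

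I would argue by contradiction, assuming $[L_{\overline{0}},L_{\overline{0}}]\neq 0$. The key auxiliary object is $W=\{\,w\in L_{\overline{0}}:[L_{\overline{1}},w]\subseteq A_{\overline{1}}\,\}$; a short graded Jacobi computation shows $W$ is an ideal of $L_{\overline{0}}$ containing $A_{\overline{0}}$. If $W\supsetneq A_{\overline{0}}$, then applying Engel's Theorem to the nilpotent action of $L_{\overline{0}}$ on $W/A_{\overline{0}}$ yields $w\in W\setminus A_{\overline{0}}$ with $[L_{\overline{0}},w]\subseteq A_{\overline{0}}$; together with $[L_{\overline{1}},w]\subseteq A_{\overline{1}}$ this gives $[L,w]\subseteq A$ for an even $w\notin A$, contradicting the extension principle. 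Hence $W=A_{\overline{0}}$. On the other hand, since $L_{\overline{0}}$ is a non-abelian nilpotent Lie algebra with $A_{\overline{0}}\subseteq Z(L_{\overline{0}})$, the quotient $L_{\overline{0}}/A_{\overline{0}}$ is a nonzero nilpotent Lie algebra, so Engel's Theorem again produces an even $u\in L_{\overline{0}}\setminus A_{\overline{0}}$ with $[u,L_{\overline{0}}]\subseteq A_{\overline{0}}\subseteq Z(L)$.

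The main obstacle is to convert this element $u$ into the final contradiction. Because $W=A_{\overline{0}}$, the element $u$ must act non-centrally on $L_{\overline{1}}$, so $u$ itself does not extend $A$; the naive candidate $B\oplus A_{\overline{1}}$, with $B$ a maximal abelian ideal of $L_{\overline{0}}$, fails for the same reason, since $B\not\subseteq W$. I therefore expect the genuine difficulty to lie wholly in the odd part: one must instead manufacture an odd element $\eta\notin A$ with $[\eta,L]\subseteq A$ and $[\eta,\eta]=0$, and then invoke the extension principle. My proposed route is to examine the nilpotent operator $ad_{u}$ on $L_{\overline{1}}$, use that $[u,L_{\overline{0}}]\subseteq Z(L)$ makes $ad_{u}$ commute with every $ad_{v}$ $(v\in L_{\overline{0}})$ on $L_{\overline{1}}$, and locate $\eta$ in a suitable image or kernel of a power of $ad_{u}$, controlling $[\eta,L_{\overline{0}}]$ and $[\eta,\eta]$ through the super Jacobi identity. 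Checking that $\eta$ can be chosen outside $Z(L)$ with vanishing square is the delicate point, and it is precisely where the nilpotency of $L$ must be used in full.
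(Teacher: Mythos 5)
Your opening moves coincide with the paper's: from $b_{A}(L)=(0,0)$ you get $A\subseteq Z(L)$, hence $A=Z(L)$ by maximality, and your ``extension principle'' (an even $x\notin A$ with $[L,x]\subseteq A$ enlarges $A$, since $[x,x]=0$ is automatic for even $x$) is exactly the engine of the paper's proof. Your $W$-argument is also sound as far as it goes: $W=\{w\in L_{\overline{0}}:[L_{\overline{1}},w]\subseteq A_{\overline{1}}\}$ is indeed an ideal of $L_{\overline{0}}$ containing $A_{\overline{0}}$ (the graded Jacobi identity plus $[A,L]=0$ gives this), and Engel on $W/A_{\overline{0}}$ combined with the extension principle forces $W=A_{\overline{0}}$. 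But the proof is not finished. Everything now hinges on manufacturing an odd $\eta\notin A$ with $[\eta,L]\subseteq A$ and $[\eta,\eta]=0$, and you only sketch a route (kernels and images of powers of $ad_{u}$, commuting with the operators $ad_{v}$, $v\in L_{\overline{0}}$) without carrying it out --- indeed you flag the decisive verification yourself as ``the delicate point.'' Note that under the standing hypotheses your own extension principle shows that every homogeneous element of the second centre lying outside $Z(L)$ is odd with nonzero square, so any such $\eta$ must be dug out of strictly deeper layers of the upper central series, and nothing in your sketch shows that the commuting nilpotent operators force a vanishing odd square there. That is a genuine missing idea, not a routine check: the heart of the lemma is left as a plan.

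For comparison, the paper dispatches precisely this point in one line: assuming $L_{\overline{0}}\neq A_{\overline{0}}$, it invokes Engel's Theorem to assert an \emph{even} $x\notin A$ with $[L,x]\subseteq A$, then applies the extension principle and concludes the stronger statement $L_{\overline{0}}=A_{\overline{0}}$ (from which $[L_{\overline{0}},L_{\overline{0}}]=0$ is immediate). Your analysis in fact isolates why that one line is non-trivial: Engel applied to the $L$-module $L/A$ yields a homogeneous annihilated element that may be purely odd (where $[x,x]\neq 0$ blocks the extension), while Engel applied to the $L_{\overline{0}}$-module $L_{\overline{0}}/A_{\overline{0}}$ controls only $[L_{\overline{0}},x]$, not $[L_{\overline{1}},x]$ --- your $W$ was built exactly to bridge that, and the case $W=A_{\overline{0}}$ is where you stall. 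To complete your route you must either prove that the annihilator of $L$ in $L/A$ has nonzero even part whenever $L_{\overline{0}}\neq A_{\overline{0}}$ (which would also repair the paper's terse Engel step), or actually carry out the construction of $\eta$; as submitted, neither is done.
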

\begin{proof}
Since $ b_{A}(L)=(0,0), $ we have $ C_{L}(A)=L $ and so $ A=Z(L). $ Thus
 $ C_{L_{\overline{0}}}  (A)=L_{\overline{0}}\setminus A_{\overline{0}}  $ and $ C_{L_{\overline{1}}} (A)=L_{\overline{1}}. $ We claim that
 $ [L_{\overline{0}}, L_{\overline{0}}]=0. $ Since $ A $ is abelian,  $ A_{\overline{0}} \subseteq C_{L_{\overline{0}}} (A). $ Let 
  $ C_{L_{\overline{0}}}(A)\neq A_{\overline{0}}. $ Then there is $ x\in C_{L_{\overline{0}} }(A)=L_{\overline{0}}\setminus A_{\overline{0}}$ and so  $ x\in C_{L}(A)=L\setminus A.$ Now, by using Engel's Theorem  $ [L, x] \subseteq A. $ Then
   $ A'= \langle x, A \rangle $ is an abelian ideal of $ L. $ Hence $ A \subseteq A' \subseteq L $ and $ A' $ a maximal abelian ideal of $ L. $ Also, $ A $ is a maximal abelian ideal of $ L $ and  we have $ L=A'. $
Since $ L $ is non-abelian, we get a contradiction. Hence $ C_{L_{\overline{0}}} (A)= A_{\overline{0}}=L_{\overline{0}}$
 and so $ [L_{\overline{0}}, L_{\overline{0}}]=0. $
\end{proof}

\begin{prop}\label{Pr3.4}
Let $ L $ be a finite-dimensional nilpotent Lie superalgebra with $ b(L)=(2,0) $   and $ A $ be a maximal abelian ideal such that $ b_{A}(L)=(0,0). $ Then one of the following conditions holds. 
\begin{itemize}
\item[(i).] $ \dim [L,L]=(2,0), $
\item[(ii).] $ \dim [L,L]=(3,0) $ and $ \dim L/Z(L)=(0,2). $
\end{itemize}
\end{prop}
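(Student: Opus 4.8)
The plan is to strip the hypotheses down to a statement about the bracket on $L_{\overline{1}}$ alone, and then to reduce the whole proposition to a bounded-rank question for a symmetric bilinear map. First I would extract the structural consequences. Since $b(L)=(2,0)$, the odd component of every $b(x)$ vanishes, so exactly as in the proof of Lemma \ref{lem3.2} one gets $[L_{\overline{0}},L_{\overline{1}}]=0$. Because $b_{A}(L)=(0,0)$, Lemma \ref{lem3.3} applies and gives $[L_{\overline{0}},L_{\overline{0}}]=0$, and as in its proof $A=Z(L)$. Combining these, $[L,L]=[L_{\overline{1}},L_{\overline{1}}]\subseteq L_{\overline{0}}$, every even element is central, and $Z(L)=L_{\overline{0}}\oplus Z(L)_{\overline{1}}$; in particular $L/Z(L)$ is purely odd and $[L,L]\subseteq Z(L)$.

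Next, since the total value of $b(L)$ is $2>0$, Theorem \ref{th2.6} yields an element $x$ with $b(x)=2$. As even elements are central, $x$ must be odd, and since $[L_{\overline{0}},L_{\overline{0}}]=0$ Lemma \ref{lem3.2} gives $[x,x]\neq 0$. Hence Theorem \ref{th2.6}(i) applies and $\dim L/Z(L)\ge 2$; being purely odd, $\dim L/Z(L)=(0,t)$ with $t\ge 2$. Moreover $b(x)=\dim[x,L_{\overline{1}}]=2$ and $[x,L_{\overline{1}}]\subseteq [L,L]$, so $\dim [L,L]\ge 2$.

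The remaining content is to bound $u:=\dim[L,L]$ and to force $t$ down when $u$ is large. Writing $V=L_{\overline{1}}/Z(L)_{\overline{1}}$ (of dimension $t$), the bracket descends to a symmetric bilinear map $\beta\colon V\times V\to [L,L]$ which is nondegenerate (its radical is trivial by construction), surjective, and satisfies $\dim\beta(v,V)=b(v)\le 2$ for all $v$. I must show that $t=2$ forces $u\le 3$ and that $t\ge 3$ forces $u=2$. The first is immediate, since for $t=2$ the map $\beta$ is determined by the three vectors $[e_1,e_1],[e_1,e_2],[e_2,e_2]$, whence $u\le 3$; together with $u\ge 2$ this gives case (i) if $u=2$ and case (ii) if $u=3$ (where indeed $\dim L/Z(L)=(0,2)$). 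For the second I would fix the rank-two element $x$, set $P_x=\beta(x,V)=\langle z_1,z_2\rangle$ with $z_1=[x,x]$, and let $K=\ker\beta(x,\cdot)$, so $\dim K=t-2\ge 1$ and $x\notin K$. For $w\in K$ every operator $\beta(x+sw,\cdot)$ has rank at most $2$; evaluating on $x$, on a complement $\langle w_1,w_2\rangle$ of $K$, and on $K$, the images are spanned by vectors linear in $s$, and the vanishing of the relevant $2\times 2$ wedges (polynomials in $s$, so all coefficients vanish) forces the image planes to collapse into $P_x$. Carrying this out for all $w\in K$ over a spanning set of $V$ would give $\beta(V,V)=P_x$, that is $u=2$, which is case (i).

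I expect the main obstacle to be precisely this last step: showing that a nondegenerate symmetric bilinear map on a space of dimension at least $3$, all of whose partial maps $\beta(v,\cdot)$ have rank at most $2$, necessarily has two-dimensional image — equivalently, that the $t$-dimensional linear family $\{\beta(v,\cdot)\}$ of rank-$\le 2$ operators must be a compression space. The delicate subcase is when a perturbation vector $\beta(w,w_i)$ is only partially transverse to $P_x$, so that a single direction does not close the argument; there one must combine several directions in $K$, using the symmetry $\beta(w,w_i)=\beta(w_i,w)$ and the intermediate fact $\beta(K,K)\subseteq P_x$, to exclude a third independent bracket value. I anticipate that this bounded-rank analysis, and not the structural reductions of the first two paragraphs, is where the genuine work lies.
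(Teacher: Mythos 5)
Your first two paragraphs are correct and coincide with the paper's opening moves: the paper likewise uses Lemma \ref{lem3.3} and $b_A(L)=(0,0)$ to get $A=Z(L)$ and $[L_{\overline{0}},L_{\overline{0}}]=0$, the $b(L)=(2,0)$ hypothesis to kill $[L_{\overline{0}},L_{\overline{1}}]$, Lemma \ref{lem3.2} to get $[x,x]\neq 0$ for odd noncentral $x$, and Theorem \ref{th2.6}(i) for the bound on $\dim L/Z(L)$. Your reformulation — pass to $V=L_{\overline{1}}/Z(L)_{\overline{1}}$ and a symmetric bilinear $\beta\colon V\times V\to [L,L]$ with trivial radical and all partial ranks $\le 2$ — is sound, and your $t=2$ branch is complete. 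But the proposal has a genuine gap exactly where you say you expect one: the claim that $t\ge 3$ forces $u=2$ (the ``compression space'' step) is asserted, not proved. The generic part of your wedge argument does work: for $w\in K$ with $\beta(w,w)\notin\langle [x,x]\rangle$, evaluating $\beta(x+sw,\cdot)$ at $x$, at $w$, and at a vector $v$ with $\beta(x,v)$ spanning $P_x$ with $[x,x]$, and using two values of $s$, yields $\beta(w,V)\subseteq P_x$. What is missing is (a) the subcase $\beta(w,w)\in\langle[x,x]\rangle$, where $[x,x]$ and $s\beta(w,w)$ are dependent and the triple wedges degenerate, and (b) the final absorption of $\beta(v,v)$ for $v$ outside $K\oplus\langle x\rangle$; neither follows from what you wrote, and you explicitly defer both. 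These are precisely the computations on which the paper spends its Subcases 1-1, 1-2, 2-1, 2-2 and the closing paragraph of Case 2, so as it stands your attempt reproduces the paper's easy half and leaves its hard half as a plan.

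Two further cautions. First, your ``polynomials in $s$, so all coefficients vanish'' step (the relevant minors are $3\times 3$ triple wedges of degree up to $3$ in $s$, not $2\times 2$) silently needs $\mathbb{F}$ to have enough elements, and deducing $\beta(K,K)\subseteq P_x$ from diagonal values by polarization fails in characteristic $2$ — the paper shares this blind spot, but a blind proof should flag it if it leans on polynomial identities. Second, note the difference in organization: the paper fixes $a$ with $[a,a]=z_1$, $[a,x]=z_2$ and excludes a third (resp.\ fourth) independent bracket value by exhibiting elements such as $a+x+y$ of breadth $3$, deriving $\dim L/Z(L)=(0,2)$ when $\dim[L,L]=3$; you split on $t=\dim L/Z(L)$ and aim at the contrapositive $t\ge 3\Rightarrow u=2$. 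The two are logically equivalent, and your framing is arguably cleaner and would make the case analysis systematic rather than ad hoc — but until the subcase in (a) and the step in (b) are actually carried out, the proposition is not proved.
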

\begin{proof}
Since $ b_{A}(L)=(0,0) $ and $ A $ is a maximal abelian ideal, we have $ A=Z(L). $  On the other hand, 
$ b(L)=(2,0) $ thus $ \dim [L_{\overline{0}}, L_{\overline{0}}]=0 $ by using Lemma \ref{lem3.3} and $ \dim[L_{\overline{0}}, L_{\overline{1}}]=0. $ 
  Also, $ [x,x]\neq 0 $ for all $ x\in L_{\overline{1}}\setminus Z(L) $ by using Lemma \ref{lem3.2}.
   Since $ b(L)=(2,0) $ and $ \dim [L, L]=\dim [L_{\overline{1}}, L_{\overline{1}}], $ there exists $ a\in L_{\overline{1}}\setminus Z(L) $ such that $ b(a)=(2,0) $ and  $ [a,a]\neq 0. $ 
  Hence $ [a, a]=z_1 $ and $ [a, x]=z_2 $ for  $ a, x\in L_{\overline{1}}, $  $ z_1, z_2\in L_{\overline{0}} $ and  $\lbrace  z_1, z_2 \rbrace$ is a linearly independent set. Since $ x\in  L_{\overline{1}}\setminus Z(L)$ and  $ [L_{\overline{0}}, L_{\overline{0}}]\neq 0, $ we have $ [x, x]\neq 0 $ by using Lemma \ref{lem3.2}. Consider the following cases. \\
 Case 1. If $ b(x)=(1, 0), $ then $ [x, x]=z_2. $ We show that there is no $ y\in L_{\overline{1}}\setminus Z(L) $ such that $ [y, y]=z_3 $ for all $ z_3 \in L_{\overline{0}}$ and $\lbrace z_1, z_2, z_3\rbrace $ is a linearly independent set.  On the contrary, let there exist such an element.\\
Subcase 1-1. Let $ b(y)=(1,0). $ Since $ b(x)=(1,0), $  $ b(a)=(2,0), $ and  $\lbrace  z_1, z_2, z_3 \rbrace$ is a linearly independent set,
we have $ [y, a]=[y, x]=0. $
  One can easily see $ b(a+x+y)=(3,0). $ It is a contradiction.\\
Subcase 1-2. Let $ b(y)=(2,0). $  Since $ b(x)=(1,0) $ and $ b(a)=(2,0), $  we have $ [y, x]=\alpha z_2 $  and $  [y, a]=\beta z_1+\gamma z_2$ for $ \alpha, \beta, \gamma \in \mathbb{F}. $  If  $ [y, a]=0 $  and $  [y, x]=\alpha z_2\neq 0 $ or $ [y, a]=[y, x]=0, $   then $ b(\alpha a+y)=(3,0) $ for $ \alpha \in \mathbb{F} $ and $ \alpha\neq 0. $ In the case $ [y, a] $  and $  [y, x] $ are non-zero since $ b(y)=(2,0), $ $ [y, x]=\alpha z_2 $  and $  [y, a]=\gamma z_2 $ for $ \alpha, \gamma \in \mathbb{F}. $ Hence $ b( a+y)=(3,0). $  If  $ [y, x]=0 $  and $  [y, a]=\beta z_1+\gamma z_2 $ for $ \beta, \gamma \in \mathbb{F}, $ then $ b(x+y)=(3,0). $
  Therefore we get a contradiction and so $ \dim [L, L]=(2,0). $ \\
 Case 2. Let $ b(x)=(2,0). $ If $ [x,x]=z_1, $ then  $ \dim [L, L]=(2,0) $  by using a similar way in Case 1.
 Let $ [x,x]=z_2. $ Since $ b(x)=(2,0)$ and $ x\in L_{\overline{1}}\setminus Z(L), $ there is $ y\in L_{\overline{1}} \setminus Z(L)$ such that $ [x, y]=z_3. $
  Hence $ b(\alpha x+a)=(3,0) $ for $ \alpha \in \mathbb{F}  $ and $ \alpha \neq 0 $ which is a contradiction.  Thus $ \dim [L, L]=(2,0). $\\
 If $ [x,x]=z_3 $ such that $ \lbrace z_1, z_2, z_3 \rbrace $ is a linearly independent set, then we claim that there is no element  $ y \in L_{\overline{1}}$ such that $ [y, y]=z_4 $ and  $ \lbrace z_1, z_2, z_3, z_4 \rbrace $ is a linearly independent set. On the contrary, assume that there exists such an element. We have the following subcases.  \\
 Subcase 2-1. Assume that $ b(y)=(1,0). $ Since $ b(x)=b(a)=(2,0) $ and  $ b(y)=(1,0), $ we have $ [a, y]= [x, y]= 0.$ Thus $ b(a+y)=(3,0).$ It is a contradiction.\\
 Subcase 2-2. Let $ b(y)=(2,0). $ Since $ [a, a]=z_1, $  $ [a, x]=z_2 $ and $ [x, x]=z_3, $ we have
  $ [a,y]=\alpha z_2+ \beta z_1 $ and $ [x,y]=\gamma z_2+\delta z_3 $ for $ \alpha, \beta, \gamma, \delta \in \mathbb{F}. $
  Since $ b(y)=(2,0), $ some cases  on coefficients $ \alpha, \beta, \gamma, \delta \in \mathbb{F} $ do not occur. By considering possible cases on coefficients $ \alpha, \beta, \gamma, \delta \in \mathbb{F}$  and  using a similar method in subcase 1-2 thus $ b(y)=(3,0) $ or $ b(y+a)=(3,0). $ It is a contradiction.\\
  Therefore $ \dim [L,L]=(3,0) $ by using subcases 2-1 and  2-2. Now, we prove that if  $ \dim [L,L]=(3,0), $ then
   $ \dim L/Z(L)=(0,2). $ In the following, we show that there is no $ y\in L_{\overline{1}}\setminus  \langle x, a \rangle  $ such that $ [y,y] $ is equal to $ z_1, $ $ z_2 $ or $ z_3. $ On the contrary, let $ [y, y]=z_1 $ and $ b(y)=(1,0). $ Then $ b(y+x)=(3,0). $ It is a contradiction. In the case  $ [y, y] $ is equal to $ z_2 $ or $ z_3 $ and $ b(y)=(1,0)$  we get a contradiction by using a similar method.
 Assume that $ [y, y]=z_1 $ and $ b(y)=(2,0). $ Since $ b(a)=b(x)=(2,0), $ $ [x, y]=\alpha z_1 $ and $ [a,y]=\beta_1 z_1+\beta_2 z_2 $ 
 for $ \alpha, \beta_1, \beta_2\in \mathbb{F}. $
By considering different cases on coefficients there is an element $ w $ such that $ b(w)\geq 3. $ It is a contradiction. Hence $ x, a \in L_1 $ are only elements belonging to  $ L_{\overline{1}} \setminus Z(L). $  Since  $ [L_{\overline{0}}, L_{\overline{1}}]=0 $ and  by using Lemma \ref{lem3.3},  we have  $ L_{\overline{0}} \subseteq Z(L).$ Hence  $ \dim L/Z(L)\leq 2. $  On the other hand, 
$2\leq \dim L/Z(L) $  by using Theorem \ref{th2.6} (i) and $ \dim (L/Z(L))_{\overline{1}}=2 $ thus $ \dim L/Z(L)=(0,2). $
Therefore   $ \dim [L, L]=(3,0) $ and $ \dim L/Z(L)=(0,2). $
\end{proof}
\begin{prop}\label{pr3.3}
There is no  finite-dimensional nilpotent Lie superalgebra  $ L $ with $ b(L)=(0,2) $   and  a maximal abelian ideal $ A $ such that $ b_{A}(L)=(0,0). $ 
\end{prop}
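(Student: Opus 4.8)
The plan is to argue by contradiction, assuming such an $L$ exists, and to play the two hypotheses against each other through the centralizer $C_L(A)$.

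First I would translate the hypothesis $b_A(L)=(0,0)$ into a statement about $C_L(A)$. By the definition of $b_A$, the condition $b_A(L)=(0,0)$ says that $\mathrm{rank}\,(ad_x|_A)=(0,0)$ for every $x\in L$, that is, $[x,a]=0$ for all $x\in L$ and all $a\in A$. Hence every element of $L$ centralizes $A$, so $C_L(A)=L$. (Equivalently $A\subseteq Z(L)$, and maximality of $A$ then forces $A=Z(L)$, exactly as in the earlier propositions, but for the contradiction I only need $C_L(A)=L$.)

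Next I would invoke Lemma \ref{lem3.1}. Since $b(L)=(0,2)$ is of the form $(0,r)$ with $r=2\geq 2$ and $A$ is a maximal abelian ideal of the nilpotent Lie superalgebra $L$, that lemma yields $C_L(A)=A$. Combining the two descriptions of $C_L(A)$ gives $L=C_L(A)=A$, so $L$ coincides with its maximal abelian ideal and is therefore abelian. But an abelian Lie superalgebra has $b(L)=(0,0)$ by Lemma \ref{l0}, which contradicts $b(L)=(0,2)$. This contradiction shows that no such $L$ can exist.

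I do not expect a genuine obstacle here: the argument is essentially immediate once Lemma \ref{lem3.1} is in place. The only real content is recognizing that the hypothesis $b_A(L)=(0,0)$ is precisely the assertion $C_L(A)=L$, which collides with the conclusion $C_L(A)=A$ of Lemma \ref{lem3.1}. The substantive work — the Engel-theorem argument that rules out $C_L(A)\supsetneq A$ by producing a strictly larger abelian ideal $\langle x,A\rangle$ when $b(L)=(0,r)$ forces $[x,x]=0$ — has already been carried out in the proof of Lemma \ref{lem3.1}, so no new estimate or case analysis is needed.
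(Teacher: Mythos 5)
Your proof is correct and follows essentially the same route as the paper's: translate $b_A(L)=(0,0)$ into $C_L(A)=L$, invoke Lemma \ref{lem3.1} to get $C_L(A)=A$, and derive the contradiction. Your version is in fact slightly more complete, since you spell out via Lemma \ref{l0} why $L=A$ abelian contradicts $b(L)=(0,2)$, a step the paper leaves implicit.
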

\begin{proof}
Since $ b_{A}(L)=(0,0)$ and $ A $ is a maximal abelian ideal of $ L, $ we have $ A=Z(L) $ and so $ C_{L}(A)=L. $  On the other hand, 
$ C_{L}(A)=A $ by using Lemma \ref{lem3.1}, it is a contradiction. Hence there is no such a Lie superalgebra.
\end{proof}
\begin{lem}\label{ll}
Let $ L $ be a finite-dimensional  nilpotent Lie superalgebra with $ b(L)=(1,1). $ Then there is $ x\in L $ such that $ b(x)=(1,1). $
\end{lem}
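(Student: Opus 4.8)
The plan is to produce the required element by combining a witness for the even part of the breadth with a witness for the odd part. First I would unpack the hypothesis: $b(L)=(1,1)$ says that $\max_{y\in L}\dim[y,L]_{\overline{0}}=1$ and $\max_{y\in L}\dim[y,L]_{\overline{1}}=1$, so both maxima are attained and both are capped at $1$. Hence there exist $a,c\in L$ with $[a,L]_{\overline{0}}\neq 0$ and $[c,L]_{\overline{1}}\neq 0$, and by the cap each of these images is one-dimensional. I may moreover take $a$ and $c$ homogeneous: if $[x,L]_{\overline{0}}\neq 0$ for some $x=x_{\overline{0}}+x_{\overline{1}}$, then, projecting the bracket onto $L_{\overline{0}}$, one sees $[x,L]_{\overline{0}}\subseteq[x_{\overline{0}},L_{\overline{0}}]+[x_{\overline{1}},L_{\overline{1}}]$, so one homogeneous part of $x$ already has nonzero even image; symmetrically for the odd breadth.

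If the chosen $a$ happens to satisfy $[a,L]_{\overline{1}}\neq 0$, or the chosen $c$ satisfies $[c,L]_{\overline{0}}\neq 0$, then that single element has both graded components of its image nonzero, each of dimension $1$, and so it has superbreadth $(1,1)$ and we are done. The only remaining case is $[a,L]_{\overline{1}}=0$ and $[c,L]_{\overline{0}}=0$, that is, $b(a)=(1,0)$ and $b(c)=(0,1)$. Here I would set $x=a+c$ and compute $[x,y]=[a,y]+[c,y]$ for $y\in L$. Since $c$ contributes nothing to the even part, the even component of $[x,y]$ coincides with that of $[a,y]$ for every $y$, so the span of these even components is exactly $[a,L]_{\overline{0}}$, of dimension one; symmetrically, since $a$ contributes nothing to the odd part, the odd components reproduce $[c,L]_{\overline{1}}$, again of dimension one. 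Therefore $b(x)=(1,1)$.

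The point where care is needed is precisely this bookkeeping of the two graded projections, and in particular the observation that in the remaining case there is no cancellation: because $c$ has zero even image and $a$ has zero odd image, passing from $a,c$ to $a+c$ cannot destroy either one-dimensional image. This ``no interference'' step is the crux, and reducing to homogeneous $a$ and $c$ (so that $[a,L]$ and $[c,L]$ are genuinely graded) is what makes the two projections behave as stated.

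Finally, I would note that the conclusion also follows quickly from Theorem \ref{th2.6}: applying it with $n=r+s=2$ yields $x\in L$ with total breadth $b(x)=2$, and since $b(L)=(1,1)$ bounds the even and odd breadths of $x$ separately by $1$, a total of $2$ is achievable only with the split $(1,1)$, whence $b(x)=(1,1)$. Either route rests on the same substantive fact---that the two independently attained one-dimensional images can be realised on a single element rather than only on separate ones---and the explicit combination $x=a+c$ is the concrete device that realises it.
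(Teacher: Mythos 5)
Your strategy is genuinely different from the paper's (the paper argues by contradiction: assuming no element has $b(x)=(1,1)$, it takes a total-breadth-$2$ element with $b(x)=(2,0)$ or $(0,2)$ and, using the existence of brackets of the other parity, manufactures an element of total breadth $3$), but your direct construction has a genuine gap at exactly the point you call the crux. Write $[a,y]=\alpha(y)u$ and $[c,y]=\beta(y)v$ with $u\in L_{\overline{0}}$, $v\in L_{\overline{1}}$ fixed and $\alpha,\beta$ nonzero linear functionals, so that $[a+c,y]=\alpha(y)u+\beta(y)v$. Your ``no interference'' argument does show that the two graded \emph{projections} of $\mathrm{Im}\, ad_{a+c}$ are $\langle u\rangle$ and $\langle v\rangle$, but that does not make the image two-dimensional: if $\ker ad_{a}=\ker ad_{c}$, i.e.\ $\beta=\lambda\alpha$, then $\mathrm{Im}\, ad_{a+c}=\langle u+\lambda v\rangle$ is a single non-graded line and $ad_{a+c}$ has rank $1$. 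Since the paper computes $b(x)=\dim L-\dim \ker ad_{x}$ (see the proof of Proposition \ref{pr3.6}, where this very lemma is applied to produce a kernel of codimension $2$), in this degenerate case $a+c$ has breadth $1$, not $(1,1)$. The case really occurs: take $L_{\overline{0}}=\langle c,u\rangle$, $L_{\overline{1}}=\langle y,a,v\rangle$ with $u,v$ central and the only brackets $[y,a]=u$, $[y,c]=v$ (extended by graded skew-symmetry). Then $b(a)=(1,0)$, $b(c)=(0,1)$, $\ker ad_{a}=\ker ad_{c}=\langle a,c,u,v\rangle$, and $\mathrm{Im}\, ad_{a+c}$ is one-dimensional; the $(1,1)$-element is $y$, not $a+c$. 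The gap is repairable along your own lines: when $\ker ad_{a}=\ker ad_{c}=H$, choose a homogeneous $y\notin H$ (possible, since $L_{\overline{0}}\cup L_{\overline{1}}\subseteq H$ would force $H=L$); graded skew-symmetry gives $0\neq [y,a]\in\langle u\rangle$ and $0\neq [y,c]\in\langle v\rangle$, so $\mathrm{Im}\, ad_{y}\supseteq \langle u,v\rangle$ and $b(y)=(1,1)$. But as written, Case B is false.

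Your fallback via Theorem \ref{th2.6} does not close the gap either. The existence clause of that theorem (an $x$ with total $b(x)=2$) is just the attainment of the maximal total breadth; to force the split $(1,1)$ you then assert that ``$b(L)=(1,1)$ bounds the even and odd breadths of $x$ separately by $1$,'' which is a componentwise reading of $b(L)$. The paper does not grant that reading: its entire proof of this lemma consists of excluding $b(x)=(2,0)$ and $(0,2)$ for the total-breadth-$2$ element, so on the paper's reading your separate bounds are precisely what must be proved, while on the componentwise reading the existence of a single rank-$2$ element is itself the whole content of the lemma --- and, as the five-dimensional example above shows, it can fail for the obvious candidate $a+c$. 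Either way the fallback assumes what is to be shown.
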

\begin{proof}
On the contrary, let there is no element  $ x\in L $  such that $ b(x)=(1,1). $ Since $ b(L)=(1,1), $ we have $ x\in L $ such that $ b(x) $ is equal to $ (2,0) $ or $ (0,2). $ Assume that $ b(x)=(2,0). $ Thus there exist $ y,z\in L$ such that the set $ \lbrace [x,y], [x,z] \rbrace \subseteq L_{\overline{0}}$ is linearly independent. On the other hand,   $ b(L)=(1,1) $ and since there is no element $ x $ such that $ b(x)=(1,1), $ 
 hence there are $ y_1, y_2 \in
 L\setminus \langle x,y,z \rangle$ such that $ [y_1, y_2]\in L_{\overline{1}} $ and  $ [y_1, y_2]\neq 0. $ Without loss generality, let $ y_1\in L_{\overline{0}} $ and $ y_2\in L_{\overline{1}}. $ If $ x, y, z\in L_{\overline{0}}, $ then $ b(x+y_1)=(3, 0).$
 If $ x, y, z\in L_{\overline{1}}, $ then $ b(x+y_2)=(3, 0).$
 It is a contradiction.  If $ b(x)=(0,2), $ then we get a contradiction by a similar way. Therefore the result follows.
\end{proof}
\begin{prop}\label{pr3.6}
Let $ L $ be a finite-dimensional  nilpotent Lie superalgebra with $ b(L)=(1,1) $   and $ A $ be a maximal abelian ideal such that $ b_{A}(L)=(0,0). $ Then one of the following conditions holds. 
\begin{itemize}
\item[(i).] $ \dim [L,L]=(1,1), $
\item[(ii).] $ \dim [L,L]=(1,2), $   $ \dim L/Z(L)=(1,2) $ and there is no element $ w\in L $ such that $ [w,w]\neq 0.$   
\end{itemize}
\end{prop}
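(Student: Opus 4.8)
The plan is to mirror the strategy of Proposition \ref{Pr3.4}, replacing the purely even breadth by the mixed breadth $(1,1)$. First I would record the standing reductions. Since $b_A(L)=(0,0)$ and $A$ is a maximal abelian ideal, every $ad_x$ vanishes on $A$, so $C_L(A)=L$ and hence $A=Z(L)$; applying Lemma \ref{lem3.3} (with $r=1\geq 1$) yields $[L_{\overline{0}},L_{\overline{0}}]=0$. Consequently the derived superalgebra splits as $[L,L]_{\overline{0}}=[L_{\overline{1}},L_{\overline{1}}]$ and $[L,L]_{\overline{1}}=[L_{\overline{0}},L_{\overline{1}}]$, and because both components of $b(L)=(1,1)$ are positive, both pieces are nonzero. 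Using Lemma \ref{ll} I then fix $x\in L$ with $b(x)=(1,1)$; since any even element $e$ satisfies $[e,L_{\overline{0}}]=0$ and hence has even-breadth $0$, the even part of $b(x)$ must be contributed by an odd element, so I may select $a\in L_{\overline{1}}$ with $\dim[a,L_{\overline{1}}]=1$ while keeping $x$ available to witness the odd direction.

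Second, I would pin the even part of the derived algebra at $\dim[L_{\overline{1}},L_{\overline{1}}]=1$. The key structural fact is that, by graded skew-symmetry, the bracket restricted to $L_{\overline{1}}$ is symmetric, while the even-breadth bound forces each $ad_a|_{L_{\overline{1}}}$ to have rank at most one. The argument then runs exactly as in Proposition \ref{pr1.1}: assuming $\dim[L_{\overline{1}},L_{\overline{1}}]\geq 2$, choose odd elements producing two linearly independent even outputs $z_1,z_2$, and combine them into a single odd element whose restricted adjoint has rank two, contradicting even-breadth $1$. This pins the even part of $[L,L]$ at exactly one dimension, consistent with both alternatives in the statement.

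Third comes the core case analysis on the odd part $[L_{\overline{0}},L_{\overline{1}}]$, carried out in the same subcase-by-subcase fashion as Proposition \ref{Pr3.4}. Starting from $a$ with $[a,a]=z_1$ (or $[a,a]=0$) and the witness $x$, for any further element $y$ producing a new independent odd output $w$ I would enumerate the admissible values of $[a,y]$ and $[x,y]$ forced by $b(y)\in\{(1,1),(2,0),(0,2)\}$, and in each configuration exhibit a combination $\alpha x+\beta a+y$ whose total breadth exceeds $2$, thereby excluding a third independent odd direction and giving $\dim[L_{\overline{0}},L_{\overline{1}}]\leq 2$. The dichotomy is then governed by Theorem \ref{th2.6}: if some breadth-$2$ element can be chosen with $[x,x]\neq 0$, one lands in alternative (i) with $\dim[L,L]=(1,1)$; otherwise every breadth-$2$ element squares to zero, Theorem \ref{th2.6}(ii) gives $\dim L/Z(L)\geq 3$, and one shows the extra odd direction is realized, yielding $\dim[L,L]=(1,2)$ and pinning $\dim L/Z(L)=(1,2)$ from above through the now-explicit generating set.

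I expect the main obstacle to lie in the simultaneous control of the two breadth directions in the third step: forming sums of elements can raise the even-breadth (through $[L_{\overline{1}},L_{\overline{1}}]$) and the odd-breadth (through $[L_{\overline{0}},L_{\overline{1}}]$) at the same time, so the bookkeeping of admissible coefficient configurations is genuinely more intricate than in the single-parity Proposition \ref{Pr3.4}. The most delicate point is alternative (ii), where one must reconcile the nonvanishing even derived part required for even-breadth $1$ with the condition that no $w$ satisfies $[w,w]\neq 0$, and then extract the sharp value $\dim L/Z(L)=(1,2)$ rather than only the bound $\geq 3$ supplied by Theorem \ref{th2.6}(ii).
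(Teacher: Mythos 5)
Your standing reductions and the general engine coincide with the paper's: $A=Z(L)$ from $b_A(L)=(0,0)$, $[L_{\overline{0}},L_{\overline{0}}]=0$ from Lemma \ref{lem3.3}, a breadth-$(1,1)$ witness from Lemma \ref{ll}, and breadth-$3$ contradictions built from sums of elements; your separate step pinning $\dim [L_{\overline{1}},L_{\overline{1}}]=1$ is a reasonable reorganization of what the paper does implicitly. But there are two genuine gaps. First, Lemma \ref{ll} only provides some $x$ with $b(x)=(1,1)$, possibly inhomogeneous; the paper splits accordingly into Case 1 (the witness is homogeneous, hence necessarily in $L_{\overline{1}}$ because $[L_{\overline{0}},L_{\overline{0}}]=0$ forces even elements to have even-breadth $0$) and a separate Case 2 in which \emph{no} homogeneous element has breadth $(1,1)$ and the witness is $x+y$ with $x\in L_{\overline{1}}$, $y\in L_{\overline{0}}$; there the paper must first establish $[x,z]=[x,w]=[y,z]=[y,w]=0$ before concluding $\dim [L,L]=(1,1)$. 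Your substitute --- extracting the odd component $a$ of the witness with $\dim [a,L_{\overline{1}}]=1$ ``while keeping $x$ available'' --- does not dispose of this case: you obtain no control on $b(a)$ nor on the mixed brackets that your coefficient enumeration presupposes, which is exactly the content of the paper's Case 2. (Relatedly, your step-two premise that every odd $a$ satisfies $\mathrm{rank}\, ad_a|_{L_{\overline{1}}}\leq 1$ tacitly excludes elements of image type $(2,0)$; the paper does not assume these away but eliminates such configurations by explicit sums, as in its subcases with $b(y)=2$.)

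Second, and decisively, your closing dichotomy is not the branching criterion the proof needs, and one of its implications fails. In the paper the fork between (i) and (ii) occurs \emph{inside} the subcase $[x,x]=0$ and is decided by whether some pair $y,z$ yields $[y,z]$ linearly independent of $\lbrace [x,y],[x,z]\rbrace$; both outcomes genuinely occur there, and the extra properties in (ii) --- $\dim L/Z(L)=(1,2)$ and the vanishing of all squares --- are derived \emph{afterwards} by further $b(w)\geq 3$ contradictions, not used to branch. Your implication ``every breadth-$2$ element squares to zero $\Rightarrow$ the extra odd direction is realized, hence $\dim [L,L]=(1,2)$'' is a non sequitur: vanishing squares cannot manufacture a third independent bracket, and $\dim [L,L]=(1,1)$ is entirely compatible with that hypothesis; conversely, in your first branch the Subcase 1-1 argument needs the \emph{witness itself}, not merely some breadth-$2$ element, to satisfy $[x,x]\neq 0$. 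Worse, your branch hypothesis collides with your own step two: if no element squares nontrivially, then polarization ($[u+v,u+v]=[u,u]+[v,v]+2[u,v]$ for odd $u,v$, characteristic $\neq 2$) forces $[L_{\overline{1}},L_{\overline{1}}]=0$, contradicting the one-dimensional even derived part you just established. So the tension you correctly flag in alternative (ii) is fatal to your proposed route --- and, it should be said, it is also glossed over in the paper's own Subcase 1-2, where the configuration contains $[x+z,x+z]=2[x,z]\neq 0$ despite the asserted no-square conclusion; but the paper at least does not make that tension carry the weight of the case division, whereas your proof does.
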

\begin{proof}
Let $ x\in L\setminus Z(L). $  Since $ b(L)=(1,1) $ and $ b_{A}(L)=(0,0), $ we have $ A=Z(L) $ and 
 $ L=H\oplus Z(L) $ where $ H $ is a supersubalgebra of $ L $ and 
\begin{align}\label{m}
b(x)&=\dim L-\ker ad_{x}\cr
& = \dim H-\ker ad_{x}|_{H} \leq 2.
\end{align}
Hence there is  $x\in  H $  with  $ b(x)=(1,1) $  by using Lemma \ref{ll}.  Consider the following cases.\\
Case 1. Let $ x\in L_{\overline{0}} $ or $ x\in L_{\overline{1}}. $  
Since $ [L_{\overline{0}}, L_{\overline{0}}]=0 $ by using Lemma \ref{lem3.3}, we have  $ x\in L_{\overline{1}}. $ \\
Subcase 1-1. Let $ [x,x]\neq 0. $ Then there is $  y \in H\setminus \langle x \rangle $ and the set $ \lbrace [x,y], [x,x] \rbrace$ is linearly  independent. Since $ x\in L_{\overline{1}}, $ we have $ y\in  L_{\overline{0}}$ and so $ [y, y]=0. $
Also, the set  $\lbrace [x, L\setminus \langle x,y \rangle], [x,x], [x,y]\rbrace$ is  Linearly dependent by using  inequality \eqref{m}, otherwise $ b(x)\geq 3 $ which is  a contradiction. 
 If $ y_1, y_2 \in H \setminus \langle x \rangle $ and $ \lbrace [x,x], [x, y], [y_1, y_2] \rbrace $
is  linearly  independent, then $ b(x+y_1)=3. $ It is a contradiction and so $ \dim [L, L]=(1,1). $\\
 Subcase 1-2. Let $ [x,x]=0. $ Then there  are $ y, z\in H\setminus \langle x \rangle $ such that $ \lbrace [x, y], [x, z] \rbrace$ is linearly independent.
 We claim that there are no $ y_1, y_2 \in H \setminus\langle  x, y, z \rangle $ such that $ \lbrace [x,y], [x,z], [y_1, y_2] \rbrace $ is linearly
   independent.  Let the set be $ \lbrace [x,y], [x,z], [y_1, y_2] \rbrace $ linearly independent. Then $ b(x+y_1)=3 $ and it is a contradiction. If 
   $ b(y)=1 $ and $ b(z)=1, $ then $ \dim [L,L]=(1,1). $ If $ b(y)=2 $ and the set $ \lbrace [x,y], [x,z], [y_1, y] \mid y_1\in H\setminus \langle x, y, z \rangle \rbrace $ is linearly independent, then $ b(x+y_1)=3 $ which is a contradiction. If $ b(z)=2 $ and the set $ \lbrace [x,y], [x,z], [y_1, z] \mid  y_1\in H\setminus \langle x, y, z \rangle \rbrace $ is linearly
    independent, then we get a contradiction by using a  similar method. The only   remaining  case  is when $ [y, z]\neq 0, $ $ [y,z]\in L_{\overline{1}} $ and $  \lbrace [x,y], [x,z], [y, z] \rbrace  $ is linearly independent. Hence $ \dim [L,L]=(1,2). $ 
We show that $[x, y_1]=0,$  $[y, y_1]=0$ and $[z, y_1]=0$ for all $ y_1 \in  L\setminus \langle x, y, z \rangle. $ On the contrary, let  
$[x, y_1]\neq 0.$ Then $ y_1- z-y \in \ker ad_x\subseteq \langle Z(L), H\setminus \langle  y, z \rangle \rangle.$ It is a contradiction. By using a similar way one can see $[y, y_1]=0$ and $[z, y_1]=0$ for all $ y_1 \in  L\setminus \langle y_1, y, z \rangle. $
 Now, we claim that $[y_1, y_2]=0$ for all $ y_1, y_2\in  L\setminus \langle x, y, z \rangle. $ On the contrary, let there exist $  y_1, y_2 \in L\setminus \langle x, y, z \rangle  $ and $[y_1, y_2]\neq 0.$ Then $ b(x+y_1)=3, $  $ b(y+y_1)=3 $ or $ b(z+y_1)=3 $   which is a contradiction.
 Hence     $ \dim L/ Z(L)=(1,2). $  Without loss generality, let $ y\in L_{\overline{0}} $ and $ z\in L_{\overline{1}}. $ Then $ [y,y]=0. $ On the other hand, if $ [z, z]\neq 0, $ then $ \ker ad_{z} |_{H}=0$ and so $ b(z)=\dim L-\dim Z(L)=3. $ It is a contradiction.  
 Hence $ [x,x]=[y,y]=[z,z]=0 $ and so $ [w,w]=0 $ for all $ w\in L. $ Otherwise there is $ w\in  L$ such that $ b(w)\geq 3 $ and it is a contradiction.  Therefore $ \dim [L,L]=(1,2), $   $ \dim L/Z(L)=(1,2) $ and there is no element $ w\in L $ such that $ [w,w]\neq 0.$    \\
Case 2.   Assume that $ b(x+y)=(1,1) $ such that $ y \in L_{\overline{0}},$ $ x\in L_{\overline{1}} $ and 
there is no element in $ H_{\overline{0}} $ and $ H_{\overline{1}} $ with superbreadth $ (1,1). $ Since $ b(L)=(1,1), $ we consider $ [x,z]\in L_{\overline{1}} $ and $ [y,w]\in L_{\overline{0}} $ for $ x,y,z,w\in H $ such that $ \lbrace y, w\rbrace $ is linearly independent and once can see $ [x+y, x+y]=0 $ and $ b(x+y)=(1,1). $ On the other hand, since there is no element in $ H_{\overline{0}} $ and $ H_{\overline{1}} $ with superbreadth $ (1,1), $  it is clear $ [x, z]=[x,w]=[y,z]=[y,w]=0. $ If there are $ x_1, y_1\in H \setminus \langle x, y, z, w \rangle $ such that  $ [x_1, y_1] \in L_{\overline{0}} $ and  $ \lbrace  [x,y], [x,z], [x_1, y_1] \rbrace $ is linearly independent, then $ b(y+x_1+z)=3 $ or $ b(y+w)=3. $  When  $ [x_1, y_1] \in L_{\overline{1}} $ and  $ \lbrace  [x,y], [x,z], [x_1, y_1] \rbrace $ is linearly independent, then $ b(x+x_1+z)=3 $ or $ b(y+z)=3. $ Therefore $ \dim [L,L]=(1,1). $
\end{proof}


Let $ L $ be a finite-dimensional  nilpotent Lie superalgebra and $ A $ be an ideal of $ L. $  Recall that centralizer of $ A $ in $ L $ is 
\begin{equation*}
 C_L(A)=\cap _{a\in A} \ker ad_{a} =\lbrace x\in L \mid ad_{x}(a)=0 ~~\forall a\in A \rbrace.
\end{equation*}
For all $ x\in L, $ define  $ M_x=C_L(A)+\ker ad_x $  and $ D_x=\cap_{a\in C_L(A)} M_{a+x}$ and  let 
\begin{equation*}
T_A=\lbrace x\in L \mid b_A(x)=(r,s), r+s=1 \rbrace.
\end{equation*}
\begin{prop}\label{pro3.8}
Let $L $ be a finite-dimensional  nilpotent Lie superalgebra, $ A $ be a maximal abelian ideal of $ L $ with   $ b_{A}(L)=(r_1, s_1) $ such that $ r_1+s_1=1$ and $ \ker ad_x|_{A}=\ker ad_y|_{A} $ for all $ x, y\in L\setminus C_{L}(A). $  Then 
\begin{itemize}
\item[(i).] $ \dim A/Z(L)=(r', s') $ such that $ r'+s'=1 $ and $ C_{A}(z)=Z(L) $ for all $ z\in T_A. $
\item[(ii).] If $ C_{L}(A) $ is abelian, then $ \dim M_z=\dim L+1-b(z) $ for all $ z\in T_A. $
\item[(iii).] If $ C_{L}(A) $ is non-abelian, then $ \dim M_z>\dim L-b(z) $ for all $ z\in T_A. $
\end{itemize}
\end{prop}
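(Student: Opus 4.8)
The plan is to reduce everything to the rank--nullity identity for the operators $ad_{z}$ together with two structural facts peculiar to this configuration: that $T_A = L\setminus C_L(A)$, and that a maximal abelian ideal $A$ coincides with $C_L(A)$ exactly when the latter is abelian. Since $b_A(L)=(r_1,s_1)$ with $r_1+s_1=1$, every $ad_{x}|_{A}$ has rank at most $1$, so $T_A$ is precisely the set of $x$ for which $ad_{x}|_{A}$ has rank $1$, that is, $T_A=L\setminus C_L(A)$. The hypothesis asserts that all these operators share one kernel on $A$; I would fix this common space $K=\ker ad_{z}|_{A}$, which is independent of the choice of $z\in T_A$.

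For part (i) the first step is to prove $K=Z(L)$. The inclusion $Z(L)\subseteq K$ is immediate: $Z(L)\subseteq A$ because $A+Z(L)$ is an abelian ideal containing $A$, and central elements lie in every kernel. For the reverse inclusion I would take $a\in K$ and check that $a$ commutes with all of $L$: for $z\in T_A$ one has $[z,a]=0$ because $a$ lies in the common kernel, while for $z\in C_L(A)$ one has $[z,a]=0$ by definition of the centralizer. This is exactly the step where the coincidence-of-kernels hypothesis does the work, since it lets a single element of $K$ be annihilated by every $ad_{z}$ at once. Once $K=Z(L)$, applying rank--nullity to $ad_{z}|_{A}$ (image $[z,A]$ of total dimension $b_A(z)=1$, kernel $Z(L)$) shows that $A/Z(L)$ is a graded space of total dimension $1$, hence $\dim A/Z(L)=(r',s')$ with $r'+s'=1$. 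The remaining claim $C_A(z)=Z(L)$ is then just $C_A(z)=\ker ad_{z}|_{A}=K=Z(L)$ for every $z\in T_A$.

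For parts (ii) and (iii) the common engine is the identity $\dim M_z=\dim L-b(z)+\dim[z,C_L(A)]$, which I would derive by expanding $\dim M_z=\dim C_L(A)+\dim\ker ad_{z}-\dim\big(C_L(A)\cap\ker ad_{z}\big)$ and identifying $\dim C_L(A)-\dim\big(C_L(A)\cap\ker ad_{z}\big)$ as the rank of $ad_{z}$ restricted to $C_L(A)$, namely $\dim[z,C_L(A)]$. In case (ii) the graded ideal $C_L(A)$ is abelian and contains $A$, so maximality of $A$ forces $C_L(A)=A$; then $\dim[z,C_L(A)]=\dim[z,A]=b_A(z)=1$ for $z\in T_A$, giving $\dim M_z=\dim L+1-b(z)$ exactly. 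In case (iii) I only need $[z,C_L(A)]\neq 0$: since $A\subseteq C_L(A)$ we have $[z,A]\subseteq[z,C_L(A)]$ and $\dim[z,A]=1$ for $z\in T_A$, whence $\dim[z,C_L(A)]\geq 1$ and $\dim M_z>\dim L-b(z)$ follows a fortiori.

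The genuinely delicate point is part (i): verifying $K=Z(L)$ from the hypothesis and then controlling the grading so that $\dim A/Z(L)$ comes out with total dimension exactly $1$ rather than merely $\geq 1$, both of which hinge on reading off $\operatorname{rank}(ad_{z}|_{A})=1$ from $z\in T_A$. Parts (ii) and (iii) reduce to bookkeeping with the single dimension formula, the only substantive observation being the maximality argument $C_L(A)=A$ in the abelian case; I expect no obstacle there beyond care with the graded dimension count.
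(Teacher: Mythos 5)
Your proof is correct and follows essentially the same route as the paper: you identify the common kernel $\ker ad_z|_{A}$ with $Z(L)$ (the paper does this by intersecting kernels over $C_{L}(A)$ and $L\setminus C_{L}(A)$, you do it element-wise), apply rank--nullity to $ad_z|_{A}$, and use the inclusion--exclusion formula for $\dim(C_{L}(A)+\ker ad_z)$ together with the maximality argument $C_{L}(A)=A$ in the abelian case. If anything, your unified identity $\dim M_z=\dim L-b(z)+\dim [z, C_{L}(A)]$ combined with $\dim [z, C_{L}(A)]\geq \dim [z,A]=1$ for $z\in T_A$ actually justifies the inequality in part (iii), which the paper asserts without proof.
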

\begin{proof}
(i). We know that $ Z(L)=\cap_{x\in L} \ker ad_{x} $  and $ \ker ad_{x}|_{A}=A$ for $ x\in C_{L}(A). $  Then
\begin{align*}
A\cap Z(L)&= A\cap (\cap_{x\in L} \ker ad_{x})\cr
& =\cap_{x\in L} (A \cap \ker ad_{x}) \cr
&=\cap_{x\in L} \ker ad_{x}|_{A}\cr
&=(\cap_{x\in C_{L}(A)} \ker ad_{x}|_{A})\cap (\cap_{x\in L\setminus C_{L}(A)} \ker ad_{x}|_{A})\cr
&=A\cap  \ker ad_{x}|_{A}\qquad x\in T_{A}\cr
&= \ker ad_{x}|_{A} \qquad \qquad x\in T_{A}.
\end{align*}
Hence $ A\cap Z(L)= \ker ad_{x}|_{A}$ for some $ x\in T_{A} $ and so
\begin{align*}
 \dim A/A\cap Z(L)&=\dim A/  \ker ad_{x}|_{A}\quad x\in T_{A}\cr
&=\mathrm{rank}~ ad_{x}|_{A}\qquad\qquad x\in T_{A}\cr
&=b_{A}(x)\cr
&=(r'_1, s'_1)\quad \text{such that }  r'_1+ s'_1=1.
\end{align*}
Assume that  $ A $ be a maximal abelian ideal. Then 
 \begin{equation*}
 \dim A/Z(L)=\dim A/A\cap Z(L)=(r'_1, s'_1) \quad \text{such that }  r'_1+ s'_1=1,
 \end{equation*}
 so
 \begin{align*}
 \dim C_{A}(z)&=\dim \ker ad_{z}|_{A}\quad\qquad\qquad z\in T_{A}\cr
& =\dim A- \mathrm{rank}~ ad_{z}|_{A}\quad z\in T_{A}\cr
 &=\dim A-b_{A}(z)\qquad \qquad \quad z\in T_{A}\cr
 &=\dim A-1\cr
 &=\dim Z(L).
 \end{align*}
 On the other hand, $ Z(L)\subseteq C_{A}(z) $ thus $ C_{A}(z)=Z(L). $\\
 (ii). Suppose that $ C_{L}(A) $ is abelian. Since $ A $ is a maximal abelian ideal of $ L, $  we have $ A=C_{L}(A). $ Since $ M_{x}=C_{L}(A)+\ker ad_{x} $ for all $ x\in L $ and $ A=C_{L}(A), $ we have 
  \begin{align*}
\dim M_{z}&=\dim  (A+\ker ad_{z})\cr
&=\dim A+\dim \ker ad_{z}-\dim (A \cap \ker ad_{z})\cr
&=\dim A+\dim \ker ad_{z}-\dim \ker ad_{z}|_{A}\cr
&=\dim A+(\dim L-b(z))-\dim C_{A}(z)\cr
&=\dim A+(\dim L-b(z))-\dim Z(L) \cr
&=\dim L+(\dim A-\dim Z(L))-b(z)\cr
&= \dim L+1-b(z)
 \end{align*}
 for all $ z\in T_{A}. $\\
 (iii). Let $ C_{L}(A) $ be non-abelian. Then 
 \begin{align*}
 \dim M_{z}&=\dim  (C_{L}(A)+\ker ad_{z})\cr
&\geq \dim L+1-b(z) \cr
&> \dim L-b(z)
 \end{align*}
 for all $ z\in T_{A}. $
 Hence $ \dim M_{z}> \dim L-b(z) $ for all $ z\in T_{A}. $\\
\end{proof}
\begin{prop}\label{pro3.9}
Let $L $ be a finite-dimensional  nilpotent Lie superalgebra  and $ A $ be a maximal abelian ideal of $ L $ with $ b(L)=(2,0) $  and $ b_A(L)=(1,0) $ such that $ C_{L}(A) $ is non-abelian and $ \dim L/C_{L}(A)=1. $  
\begin{itemize}
\item[(i).]  If $ \dim C_{L}(A)/A=1, $ then $ \dim L/A=2. $
\item[(ii).] If $ \dim C_{L}(A)/A\geq 2, $ then $ \dim [L,L]=(2,0). $
\end{itemize}
\end{prop}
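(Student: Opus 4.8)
The plan is to treat the two parts quite differently. Part~(i) is a pure dimension count: since $A$ is an ideal contained in $C_L(A)$, which in turn sits inside $L$, the (total) dimensions add along the tower $A\subseteq C_L(A)\subseteq L$, so that $\dim L/A=\dim L/C_L(A)+\dim C_L(A)/A=1+1=2$. No feature of the superalgebra structure beyond the stated hypotheses is needed here.

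For part~(ii) I would first extract the structural consequences of $b(L)=(2,0)$ and set up notation. Exactly as in the proof of Proposition~\ref{Pr3.4}, $b(L)=(2,0)$ forces $[L_{\overline 0},L_{\overline 1}]=0$, so $[L,L]=[L_{\overline 0},L_{\overline 0}]+[L_{\overline 1},L_{\overline 1}]\subseteq L_{\overline 0}$ and hence $\dim[L,L]=(d,0)$ for some $d$; since the value $(2,0)$ is attained by some $ad_x$, we already have $d\ge 2$, and the whole point of~(ii) is to prove $d\le 2$. Writing $C:=C_L(A)$, the graded Jacobi identity shows that $C$ is in fact an ideal of $L$ with $[C,A]=0$. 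Using $\dim L/C=1$ I fix a homogeneous $t$ with $L=\mathbb{F}t\oplus C$, and a graded complement $C'$ of $A$ in $C$, so $\dim C'=\dim C/A\ge 2$. Because $[A,A]=0$ and $[C,A]=0$, every bracket involving $A$ collapses, yielding the key identity
\[
[L,L]=[t,L]+[C,C],\qquad [C,C]=[C',C'],
\]
where $\dim[t,L]=b(t)\le 2$ and, from $b_A(L)=(1,0)$, also $\dim[t,A]=b_A(t)\le 1$. Thus proving $d\le 2$ reduces to showing $[C,C]\subseteq[t,L]$.

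I would establish this by contradiction. Assume $\dim[L,L]\ge 3$; then $[C,C]=[C',C']$ is not contained in $[t,L]$, and since $C$ is non-abelian and $\dim C'\ge 2$, there are homogeneous $c_1,c_2\in C'$ whose bracket lies outside $[t,L]$. Starting from an element of breadth $(2,0)$ and adjoining a suitable multiple of $c_1$ or $c_2$ (or of $t$), I would produce a single element whose image under $ad$ is at least three-dimensional, contradicting $b(L)=(2,0)$. The point that makes this feasible is that $[c_i,t]\in[t,C]\subseteq[t,L]$, so adjoining $c_i$ cannot destroy the two directions already carried by $[t,L]$, while it does reach the new direction $[c_1,c_2]$.

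The main obstacle is precisely this last step. The construction of the breadth-$3$ element branches according to the parities of $t,c_1,c_2$ and according to which of $[t,c_1],[t,c_2],[c_1,c_1],[c_2,c_2]$ vanish, and in each branch one must check that an explicit combination $\lambda t+\mu c_1+\nu c_2$ (or $c_1+c_2$) has three linearly independent adjoint values. This is the same style of careful case analysis carried out in the subcases of Propositions~\ref{Pr3.4} and~\ref{pr3.6}; organising it so that the hypotheses $\dim C/A\ge 2$ and the non-abelianness of $C$ are invoked exactly where they are needed is where the real work lies.
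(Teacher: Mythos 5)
Your part (i) is exactly the paper's argument (pick $u$ spanning $L$ modulo $C_L(A)$ and $v$ spanning $C_L(A)$ modulo $A$), and your global strategy for part (ii) --- assume $\dim [L,L]\geq 3$ and manufacture an element of breadth at least $3$ --- is also the paper's. One smaller problem first: your claimed reduction of $d\leq 2$ to the containment $[C,C]\subseteq [t,L]$ is false, not merely unproved. In the very configuration the paper arrives at in its case $b(u)=(1,0)$, one has $[u,u]=0$ and $[H,u]=0$, so $[t,L]=\langle [u,v]\rangle$ is one-dimensional while $[L,L]=\langle [v_1,v_2],[u,v]\rangle$ with $[v_1,v_2]\in [C,C]\setminus [t,L]$. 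Your subsequent contradiction argument only uses the correct contrapositive (that $\dim[L,L]\geq 3$ forces $[C,C]\not\subseteq [t,L]$), so this slip is repairable, but it shows the picture guiding your case analysis is inaccurate.

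The genuine gap is the final step, which you yourself flag as the main obstacle: the construction of the breadth-$3$ element is never carried out, and the principle you offer to make it routine --- that $[c_i,t]\in[t,L]$, so adjoining $c_i$ ``cannot destroy the two directions already carried by $[t,L]$'' --- fails. For $w=\lambda t+\mu c_1$ and a general $x=\alpha t+a+c'\in L$ one gets $[w,x]=\lambda [t,x]\pm \mu\alpha[t,c_1]+\mu[c_1,c']$, and the term $[c_1,c']\in[C,C]$ need not lie in $[t,L]$; since $[t,L]\cap[C,C]$ can be nonzero, this term can cancel or collapse the directions $[t,x_1],[t,x_2]$ (for instance $[c_1,x_1]=-[t,x_1]$ is possible), and rescuing the argument by choosing $\mu$ generically requires enough scalars, whereas the statement is over an arbitrary field. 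The paper closes exactly this hole before any case analysis, with machinery your sketch never develops: since $\dim L/C_L(A)=1$, the hypothesis of Proposition \ref{pro3.8} holds automatically; its part (i) gives $\dim A/Z(L)=1$ and hence a distinguished pair $u\notin C_L(A)$, $v\in A\setminus Z(L)$ with $[A,L]=\langle [u,v]\rangle$ and $b(v)=(1,0)$; and its part (iii) --- this is precisely where the non-abelianness of $C_L(A)$ enters --- gives $\dim M_u>\dim L-b(u)$, hence $M_u=C_L(A)+\ker ad_u=L$ when $b(u)=(1,0)$, which locates $\ker ad_u$ and yields $[H,u]=0$ (and analogously $[H\setminus\langle v_1\rangle,u]=0$ in the cases $b(u)=(2,0)$). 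Only with $u$ thus decoupled from $H$ does each branch of the case analysis actually produce a breadth-$3$ element. Without an analogue of these two facts, the ``remaining case analysis'' you defer is not a finite routine check, and several of its branches cannot be closed.
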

\begin{proof}
(i). Since $ \dim L/C_{L}(A)=1, $ there is $ u\in L\setminus C_{L}(A) $ such that  $ L=C_{L}(A) \oplus \langle u \rangle. $ On the other hand, $ \dim C_{L}(A)/ A=1, $ so there exist $ v\in C_{L}(A)\setminus A $ such that $ C_{L}(A)=A\oplus \langle v\rangle. $ Therefore $ L/A=\langle \overline{u}, \overline{v} \rangle $ and so $ \dim L/A=2. $ \\
(ii). Let $ \dim C_{L}(A)/ A\geq 2. $ Then there exist a Lie subsuperalgebra  $ H $
of $C_{L}(A)$  such that $ C_{L}(A)=A\oplus H $ and $ \dim H\geq 2. $ Since $ \dim L/C_{L}(A)=1, $
we have
 $ \dim A/Z(L)=1 $ by using Proposition \ref{pro3.8} (i) and   there are $ u\in L\setminus C_{L}(A) $ and  $ v\in A\setminus Z(L). $ Therefore $ L=Z(L)\oplus \langle v\rangle \oplus H\oplus \langle u\rangle. $  Since $ b_{A}(L)=(1,0) $ and $ [A, C_{L}(A)]=0, $ we have $ [v, u]\neq 0 $ and so $ b(v)=(1,0). $ 
If $ b(u)=(1, 0) $ then $ \dim M_{u}> \dim L-1 $ by using  Proposition \ref{pro3.8}(iii). Thus $ M_{u}=L $ and $ [u,u]=0. $
 We claim that $ [H,u]=0. $ If there exists $ v_1\in H $ such that $ [u, v_1]\neq 0, $ then $ \ker ad_{u} \subseteq \langle u, Z(L), H\setminus \langle v_1\rangle \rangle .$ 
Since $ b(u)=(1,0), $ the set $ \lbrace  [u, v_1], [u, v]  \rbrace $ is linearly dependent and so $ [u, v_1]= \alpha [u, v] $ for $ \alpha \in \mathbb{F}. $ Hence $ [u, v_1 -\alpha v]=0 $ and so $ v_1 -\alpha v\in \ker ad_{u} \subseteq \langle u,  Z(L), H\setminus\langle v_1  \rangle \rangle .$  It is a contradiction. Thus $ [H,u]=0. $ Since $ b(L)=(2,0), $  $ b(u)=b(v)=(1, 0), $ and $ [u, L\setminus \langle v \rangle]=[v, L\setminus \langle u\rangle]=0, $
there is  $ v_1\in L $ such that $ b(v_1)=(2,0)$  and $ \lbrace [v_1, v_2], [v_1, v_3] \mid v_1, v_2,  v_3\in H \rbrace $ is linearly independent. We show that $ \dim [L,L]=(2,0). $
On the contrary, the set  $ \lbrace [v_1, v_2], [v_1, v_3], [u,v]\rbrace $ is   linearly independent. Thus $ b(u+v_1)=3, $ and  we get a contradiction. 
Without loss of generality, let $ \lbrace [v_1, v_2],  [u,v]\rbrace $ be   linearly independent. We show that $ \lbrace [v_1, v_2],  [u,v], [a, b] \mid ~~\text{for all}~~ a,b \in L\rbrace $ is   linearly dependent. On the contrary, let $ \lbrace [v_1, v_2],  [u,v], [a, b] \mid ~~\text{for all}~~ a,b \in L\rbrace $ be   linearly independent. Hence $ b(v_1+u+a) $ or $ b(v_2+u) $ is equal to $ 3 $ and we get a contradiction.
Therefore $ \dim [L,L]=(2,0). $\\
Let $ b(u)=(2,0)$  and $ [u, u]\neq 0. $ We show that $ [H, u]=0. $
If there are $ v_1\in H $ such that $ [v_1, u]\neq 0, $ then $ \ker ad_u \subseteq  \langle  A\setminus \lbrace v \rbrace, H\setminus \lbrace v_1 \rbrace  \rangle.$ 
Moreover $ b(u)=(2,0) $ thus the set $ \lbrace  [v_1, u], [v, u], [u,u] \rbrace $ is linearly dependent.  Hence $ [v_1, u]= \beta [v, u]+\gamma [u, u]  $ for $  \beta, \gamma \in \mathbb{F} $ and so $ v_1+\gamma u+\beta v \in \ker ad_{u} \subseteq \langle  A\setminus \langle v \rangle, H\setminus \langle v_1 \rangle  \rangle. $ It is a contradiction. Thus $ [H, u]=0. $ The set $ \lbrace  [v_1,v_2], [v, u], [u,u] \rbrace $ is linearly dependent for all $ v_1, v_2 \in L, $ otherwise $ b(u+v)=(3,0) $ it is a contradiction. Hence in this case we have $ \dim [L,L]=(2,0). $ \\
Let $ b(u)=(2,0)$ and  $ [u, u]= 0. $ Then there exists $ v_1\in L $ such that $ \lbrace [u,v], [u, v_1] \rbrace$ is linearly independent. By using a similar way $ [H\setminus \langle v_1 \rangle, u]=0. $  If  the set $ \lbrace [u,v], [u, v_1], [v_2,v_3]\rbrace$ is linearly independent for all $ v_2, v_3 \in L,$ then we have $ b(u+v_1+v_2)=(3,0) $ or $ b(u+v_3)=(3,0). $ It is a contradiction.  Therefore $ \dim [L,L]=(2,0). $
\end{proof}
\begin{prop}\label{pro3.99}
Let $L $ be a finite-dimensional  nilpotent Lie superalgebra  and $ A $ be a maximal abelian ideal of $ L $ with $ b(L)=(1,1) $  and $ b_A(L)=(r,s) $ such that  $ r+s=1, $ $ C_{L}(A) $ is non-abelian and $ \dim L/C_{L}(A)=1. $  
\begin{itemize}
\item[(i).]  If $ \dim C_{L}(A)/A=1, $ then $ \dim L/A=2. $
\item[(ii).] If $ \dim C_{L}(A)/A\geq 2, $ then $ \dim [L,L]=(1,1). $
\end{itemize}
\end{prop}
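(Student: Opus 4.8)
The plan is to follow the template of Proposition~\ref{pro3.9} throughout, recasting every breadth count in the graded regime $b(L)=(1,1)$. Part (i) depends on nothing but dimensions: the hypotheses give a tower $A\subseteq C_L(A)\subseteq L$ with both quotients one-dimensional, so choosing $u\in L\setminus C_L(A)$ with $L=C_L(A)\oplus\langle u\rangle$ and $v\in C_L(A)\setminus A$ with $C_L(A)=A\oplus\langle v\rangle$ yields $L/A=\langle\overline{u},\overline{v}\rangle$, whence $\dim L/A=2$, exactly as in Proposition~\ref{pro3.9}(i).

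For part (ii) I would first fix the decomposition. From $\dim C_L(A)/A\geq 2$ I take a subsuperalgebra $H$ with $C_L(A)=A\oplus H$ and $\dim H\geq 2$; from $\dim L/C_L(A)=1$ together with Proposition~\ref{pro3.8}(i), which yields $\dim A/Z(L)=(r',s')$ with $r'+s'=1$, I obtain $u\in L\setminus C_L(A)$ and $v\in A\setminus Z(L)$ with $L=Z(L)\oplus\langle v\rangle\oplus H\oplus\langle u\rangle$. Since $C_L(A)$ centralizes $A$ by definition, $[A,C_L(A)]=0$, so $v\in A$ can fail to be central only against $u$; hence $[v,u]\neq 0$, and as the image of $ad_v$ is spanned by $[v,u]$ we get $b(v)=(r,s)$ with $r+s=1$, pinning one generator of $[L,L]$ in a single graded component.

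The heart of the proof is the case analysis on $u$. As $u\notin C_L(A)$ we have $b_A(u)=(r,s)\neq(0,0)$, and as $b(L)=(1,1)$ the superbreadth of $u$ is bounded componentwise by $(1,1)$; thus $b(u)$ runs over $(1,1)$ and the degenerate values where one graded part is forced, each split further according to whether $[u,u]=0$. In every case I would first prove $[H,u]=0$ by the kernel argument of Proposition~\ref{pro3.9}(ii): if some $v_1\in H$ had $[v_1,u]\neq 0$, then, the image of $ad_u$ being graded-one-dimensional in the relevant component, $[v_1,u]$ is a scalar combination of $[v,u]$ and $[u,u]$, so a suitable $v_1-\alpha u-\beta v$ lies in $\ker ad_u$, contradicting the direct-sum decomposition. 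Once $[H,u]=0$, the only remaining brackets are those of $[H,H]$, and I would show that any $[v_2,v_3]$ with $v_2,v_3\in H$, independent in its graded component of the generator already found, lets me assemble an element $w=\alpha u+\beta v+\cdots$ whose $ad_w$ has two independent images in one graded component, i.e. $b(w)$ exceeding $(1,1)$; this contradiction forces $[H,H]$ into the span already accounted for, giving $\dim[L,L]=(1,1)$.

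The main obstacle, and the genuine difference from Proposition~\ref{pro3.9}, is the graded bookkeeping. In the $(2,0)$ case all brackets lived in $L_{\overline{0}}$, so independence and ``breadth at least $3$'' were decided by a single linear-algebra count. Here the four families $[v,u]$, $[u,u]$, $[H,u]$, $[H,H]$ distribute across $L_{\overline{0}}$ and $L_{\overline{1}}$, and the operative bound is the sharper componentwise $b(w)\leq(1,1)$. The delicate part is therefore tracking the parity of each bracket and choosing, in each subcase (and in the two symmetric cases $(r,s)=(1,0)$ and $(0,1)$), the combination $w$ that produces two independent images in the \emph{same} graded component; each individual verification is the routine dependence argument above, but the casework is where the real labour sits.
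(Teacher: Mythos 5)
Your part (i) and the global architecture of your part (ii) --- the decomposition $L=Z(L)\oplus\langle v\rangle\oplus H\oplus\langle u\rangle$ obtained from Proposition \ref{pro3.8}(i), the forced bracket $[v,u]\neq 0$ with $b(v)$ of total breadth one, the case split on $b(u)$, and the closing dependence test on $[H,H]$ --- coincide with the paper's route, which is literally ``similar to Proposition \ref{pro3.9}''. However, one step of your plan fails as stated: the blanket claim that $[H,u]=0$ can be proved ``in every case'' by the kernel argument. That argument extracts a dependence $[v_1,u]=\alpha[v,u]+\beta[u,u]$ from the one-dimensionality of a component of $\mathrm{Im}\,ad_u$, and this inference is available only when $[v_1,u]$ lies in a graded component already occupied by $[v,u]$ or $[u,u]$. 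In the principal subcase $b(u)=(1,1)$ with $[u,u]=0$ the claim is not merely unprovable but incompatible with the hypotheses: $[Z(L),u]=0$, the element $v$ may be taken homogeneous (since $\dim A/Z(L)=(r',s')$ with $r'+s'=1$, the quotient is concentrated in one parity), so $[v,u]$ spans a single component, and $[u,u]=0$; hence the second graded component of $\mathrm{Im}\,ad_u$ can only be produced by $H$, i.e. $[H,u]\neq 0$ is forced rather than excluded --- unless you prove this subcase vacuous under the hypotheses, which you do not attempt. The same defect appears when $[u,u]\neq 0$ (recall $[u,u]$ is always even) but $[v_1,u]$ has the opposite parity to both $[v,u]$ and $[u,u]$. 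The paper's own template already signals the correct behaviour: in the subcase $b(u)=(2,0)$, $[u,u]=0$ of Proposition \ref{pro3.9} it does \emph{not} prove $[H,u]=0$, but only $[H\setminus\langle v_1\rangle,u]=0$, retaining one exceptional $v_1$ with $\lbrace [u,v],[u,v_1]\rbrace$ independent and then testing $[H,H]$ against both generators.

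The repair is structural rather than cosmetic. In each case you should prove only the componentwise statement: within each fixed parity, every $[v_1,u]$ is dependent on the generator already present in that parity (there the kernel argument is legitimate, since $b(u)\leq(1,1)$ componentwise). This yields that $\mathrm{Im}\,ad_u$ is spanned by $[v,u]$ together with at most one homogeneous bracket $[v_1,u]$ of the opposite parity, and only then should you run the final argument: every bracket of $[H,H]$, and $[u,u]$, is dependent, parity by parity, on these at most two generators, since otherwise one assembles (as you correctly describe) an element $w$ whose $ad_w$ has two independent images in a single graded component, contradicting $b(L)=(1,1)$. You should also record the lower bound explicitly: Lemma \ref{ll} supplies $x\in L$ with $b(x)=(1,1)$, so $\dim[L,L]\geq (1,1)$ componentwise, and the dependence arguments then give $\dim[L,L]=(1,1)$ exactly.
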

\begin{proof}
The proof is similar to  the proof of Proposition \ref{pro3.9}.
\end{proof}
\begin{prop}\label{prr3.10}
Let $L $ be a finite-dimensional  nilpotent Lie superalgebra with $ b(L)=2, $  $ A $ be a maximal abelian ideal of $ L $  such that  $ b_A(L)=1$ 
  and $ \dim L/C_{L}(A)\geq 2. $  Then $ \dim [L,L]=2. $
\end{prop}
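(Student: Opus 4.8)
The plan is to establish the two bounds $\dim[L,L]\ge 2$ and $\dim[L,L]\le 2$ separately. The lower bound is immediate: since $b(L)=2$ there is $x\in L$ with $\dim\mathrm{Im}\,\mathrm{ad}_{x}=2$, and $\mathrm{Im}\,\mathrm{ad}_{x}\subseteq[L,L]$, so $\dim[L,L]\ge 2$. Everything therefore reduces to proving $\dim[L,L]\le 2$, which I would do by contradiction: assume there are three linearly independent commutators $z_{1},z_{2},z_{3}\in[L,L]$ and manufacture an element $w\in L$ with $b(w)\ge 3$, contradicting $b(L)=2$. The standing observation I would use is that, because $b_{A}(L)=1$, every $x\in L\setminus C_{L}(A)$ lies in $T_{A}$ and has $\mathrm{rank}\,\mathrm{ad}_{x}|_{A}=1$; thus each such $[x,A]$ is a line and each $\ker\mathrm{ad}_{x}|_{A}$ is a hyperplane of $A$.

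The first key step is to exploit $\dim L/C_{L}(A)\ge 2$ together with $b_{A}(L)=1$ to constrain the family $\{\mathrm{ad}_{x}|_{A}:x\in L\}$. Choose $u,v\in L$ linearly independent modulo $C_{L}(A)$. Since $\mathrm{ad}_{u+v}|_{A}=\mathrm{ad}_{u}|_{A}+\mathrm{ad}_{v}|_{A}$ again has rank at most $1$, a short computation (if $\ker\mathrm{ad}_{u}|_{A}\neq\ker\mathrm{ad}_{v}|_{A}$, pick $a\in\ker\mathrm{ad}_{u}|_{A}\setminus\ker\mathrm{ad}_{v}|_{A}$ and $a'\in\ker\mathrm{ad}_{v}|_{A}\setminus\ker\mathrm{ad}_{u}|_{A}$; then $[v,a]$ and $[u,a']$ are nonzero and, by rank one, parallel) forces either a common kernel hyperplane or a common image line. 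Iterating over external pairs yields the dichotomy: \emph{(common kernel)} there is a hyperplane $H\subset A$ with $[L,H]=0$, whence $H\subseteq Z(L)$ and, by Proposition \ref{pro3.8}(i), $\dim A/Z(L)=1$; or \emph{(common image)} there is a line $\langle w\rangle$ with $[L,A]=\langle w\rangle$, so $\dim[L,A]=1$ and in particular $b(a)\le 1$ for every $a\in A$.

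I would then treat the two alternatives and in each deduce that a third independent commutator cannot exist. In the common-kernel case write $A=Z(L)\oplus\langle a_{0}\rangle$; then $[L,A]=[L,a_{0}]=\mathrm{Im}\,\mathrm{ad}_{a_{0}}$ has dimension $b(a_{0})\le 2$, so any putative $z_{3}$ independent of $z_{1},z_{2}$ must come from a bracket of elements outside $A$. In the common-image case $[L,A]=\langle w\rangle$ is already one-dimensional, so again a third independent commutator lives outside $[L,A]$. In both situations I fix $g$ with $b(g)=2$, say $[g,p_{1}]=z_{1}$ and $[g,p_{2}]=z_{2}$ independent, together with the external elements $u,v$ supplied by $\dim L/C_{L}(A)\ge 2$, and examine a combination $w=\alpha u+\beta v+g$ chosen so that $\mathrm{ad}_{w}$ simultaneously detects $z_{1}$, $z_{2}$ and a carrier of $z_{3}$; independence of $z_{1},z_{2},z_{3}$ then gives $\dim\mathrm{Im}\,\mathrm{ad}_{w}\ge 3$, contradicting $b(L)=2$. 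These constructions run exactly as the breadth-$3$ fabrications in Propositions \ref{Pr3.4} and \ref{pro3.9}, the new ingredient being that the rank-one behaviour on $A$ confines all $A$-directions to a single line, so the third commutator cannot be absorbed by adjusting the $A$-component of $w$.

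The main obstacle I anticipate is the common-kernel case with $C_{L}(A)$ non-abelian, where $z_{3}$ may originate in $[C_{L}(A),C_{L}(A)]$ rather than in $[L,A]$; there I would invoke Proposition \ref{pro3.8}(iii) (so that $\ker\mathrm{ad}_{z}$ is large for $z\in T_{A}$, i.e. an external element acts with ample freedom on $C_{L}(A)$) to find an external $u$ that can be added to a carrier of $z_{3}$ without annihilating $z_{1}$ or $z_{2}$. A secondary technical point is the $\mathbb{Z}_{2}$-grading: since brackets are homogeneous, each $\mathrm{Im}\,\mathrm{ad}_{w}$ splits into even and odd parts, and one must verify that the three independent commutators really contribute three independent directions after recording which graded components $g,u,v$ occupy. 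This grading bookkeeping, together with choosing the carriers of $z_{1},z_{2},z_{3}$ compatibly with the grading, is the step demanding the most care, though it is routine once the carriers are fixed.
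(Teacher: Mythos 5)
Your case split is the right one---the rank-one dichotomy for the family $\mathrm{ad}_x|_A$ (a common kernel hyperplane in $A$ versus a common image line) is exactly the paper's division into the case $[A,x]\cap[A,y]\neq\{0\}$ for all external pairs versus the case where some pair has trivial intersection, and the lower bound $\dim[L,L]\geq 2$ is indeed immediate. But the decisive steps of your plan are never carried out, and where you sketch a mechanism it is the wrong one. In the common-image case you propose to manufacture directly an element $w=\alpha u+\beta v+g$ with $b(w)\geq 3$; you never verify that such scalars exist, and the obstruction is real: $\dim\mathrm{Im}\,\mathrm{ad}_w$ is not bounded below by the separate contributions of $u,v,g$, since images can overlap and cancel under the sum. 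The paper avoids this fabrication entirely in that case: from $\dim[A,L]=1$ it passes to the quotient $L/[A,L]$, checks that $b(L/[A,L])<2$ (because $[x,A]\subseteq[x,L]$ with $\dim[x,A]=1$ and $b(L)=2$), rules out $b(L/[A,L])=0$ by Lemma \ref{l0}, and then applies the breadth-one classification, Proposition \ref{pr1.1}, to get $\dim[L,L]/[A,L]=1$, hence $\dim[L,L]=2$. Nothing in your sketch replaces this quotient step, and it is the whole content of that half of the proof.

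In the common-kernel case the gap is sharper still. The paper first pins down $\dim L/C_L(A)=2$ exactly: with the single pivot $a\in A\setminus Z(L)$, independence of $\{[a,x],[a,y],[a,z]\}$ would give $b(a)=3$, so every external $z$ satisfies $z-\alpha x-\beta y\in C_L(A)$. It then eliminates the only dangerous configuration---$\{[x,y],[a,x],[a,y]\}$ linearly independent---by the explicit dimension count \eqref{e3.10}/\eqref{e3.101} on $M_x=C_L(A)+\ker\mathrm{ad}_x$, which forces $[C_L(A),x]=0$ and contradicts $[a,x]\neq 0$. Your plan contains no analogue of either step, and your intended appeal to Proposition \ref{pro3.8}(iii) misreads it: part (iii) bounds $\dim M_z=\dim\left(C_L(A)+\ker\mathrm{ad}_z\right)$ from below, i.e.\ it constrains the \emph{sum}, not $\ker\mathrm{ad}_z$ itself, so it does not show that an external element ``acts with ample freedom on $C_L(A)$''; its actual role in the paper (Proposition \ref{pro3.9}) is to force $M_u=L$, a different deduction. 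A smaller but genuine point: ``iterating over external pairs'' does not by itself globalize the dichotomy, since pairwise conclusions can mix (one pair sharing kernels, another sharing images); you need that $\{\mathrm{ad}_x|_A : x\in L\}$ is a \emph{linear space} of rank-$\leq 1$ operators, for which the common-kernel-or-common-image alternative is a standard fact. That is fixable, but as written it is one more unproved step. In sum, your skeleton matches the paper's, but the two closing arguments---the quotient/Proposition \ref{pr1.1} reduction in the common-image case, and $\dim L/C_L(A)=2$ plus the $M_x$-dimension contradiction in the common-kernel case---are precisely what is missing.
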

\begin{proof}
Let $ \dim L/C_{L}(A)\geq 2 $ and  $ [A, x] \cap [A, y] \neq \lbrace 0 \rbrace $ for all $ x, y \in L\setminus C_{L}(A). $ Since $ b_{A}(L)=1, $ we have $ [A, x]=[A,y] $ for $ x, y \in L\setminus C_{L}(A), $  so $ \dim [A, L]=1. $ 
Assume that $ x+[A,L]\in L/ [A,L]. $ Since $ [x,A]\subseteq [x,L], $ $ \dim [x,A]=1$ and $ b(L)=2, $ we have $ \dim [x,L]/ [x,A]\leq 1. $ It implies $ b(L/[A,L])<2. $ 
 Let $ b(L/[L,A])=0. $ Then $ L/[L,A] $ is abelian by using Lemma \ref{l0} and so $ b(L)=1 $ which is a contradiction.
 Assume that $ b(L/[L,A])=1. $ Then $ \dim [L,L]/[L,A]=1 $ by using Proposition \ref{pr1.1}. Since $ \dim [A, L]=1, $ we have $ \dim [L,L]=2. $ \\
Assume that   there are   $ x, y \in L\setminus C_{L}(A) $ such that 
$ [A, x] \cap [A, y] = \lbrace 0 \rbrace. $  Since $ x,y\in L\setminus C_{L}(A), $ we have $ [A,x]\neq 0 $ and $ [A, y]\neq 0. $ If there exist 
$ a, b\in A\setminus Z(L), $  $ \lbrace a, b \rbrace$  and  $ \lbrace [a,x], [b,y] \rbrace $ are linearly independent, then $ b_{A}(x+y)=2$ and it is a contradiction with  $ b_{A}(L)=1. $ Hence there is only an element  $ a $ such that $ a\in A\setminus Z(L) $ and  $ \lbrace [a,x], [a,y] \rbrace$ is linearly independent.
 Let $ z\in L\setminus C_{L}(A). $ If $ \lbrace [a,x], [a, y], [a, z] \rbrace $ is linearly independent, then $ b(a)=3 $ and it is a contradiction. Hence  $ \lbrace [a,x], [a, y], [a, z] \rbrace $ is linearly dependent and  $ [a,z]=\alpha  [a,x]+\beta [a, y]$ for $ \alpha, \beta \in \mathbb{F} $ and so $ z-\alpha x-\beta y \in C_{L}(A). $ Therefore $ \dim L/C_{L}(A)=2. $ \\
 Case 1. If $ C_{L}(A) $ be non-abelian, then there exists $ z_1\in C_{L}(A)\setminus A. $ Assume that $ z_2 \in C_{L}(A)\setminus A, $ $ b(z_1)=1 $ and $ \lbrace [z_1, z_2],  [a,x], [a, y] \rbrace$ is a linearly independent set. Then $ b(z_1+a)=3 $ which is a contradiction. Consider  $ b(z_1)=2, $   $ \lbrace  [z_1,z_2], [z_1, z_3] \rbrace $   and  $ \lbrace [z_1, z_3],  [a,x], [a, y] \rbrace$ for $ z_1, z_2, z_3 \in C_{L}(A)\setminus A $ are  linearly independent. Then $b(z_1+a)=3, $ which is a contradiction. In the case $ \lbrace [x, z_1], [a,x], [a, y] \rbrace $ is linearly independent, we have $ b(x+a)=3. $ It is a contradiction and so $\lbrace [x, z_1], [a,x], [a, y] \rbrace $ is linearly dependent  for all $ z_1\in C_{L}(A)\setminus A. $ By using a similar method, the sets $\lbrace [y, z_1], [a,x], [a, y] \rbrace $ and
 $\lbrace [a, z_1], [a,x], [a, y] \rbrace $  are  linearly dependent  for all $ z_1\in C_{L}(A)\setminus A. $ Let $ \lbrace [x, y], [a,x], [a, y] \rbrace  $ be  linearly independent.
Then we  show that $ \dim [L, L]=2. $ 
 Since $ b_{A}(L)=1, $ $ [A,x]\cap [A,y]=\lbrace 0 \rbrace $ and the set  $ \lbrace [x, y], [a,x], [a, y] \rbrace  $ is  linearly independent,  so $ \dim M_{x} $ is equal to $ \dim C_{L}(A)+1 $ or $ \dim C_{L}(A). $ On the other hand, $ b(x)=\dim L- \ker ad_x $ and 
 $ M_x =C_{L}(A)+\ker ad_x,$ thus 
 \begin{align}\label{e3.10}
2&=b(x)\cr
&=\dim L-\dim \ker ad_{x}\cr
& =\dim L-(\dim C_{L}(A)+1-\dim C_{L}(A)+\dim C_{L}(A)\cap \ker ad_{x})\cr
&=1-\dim C_{L}(A)/C_{L}(A) \cap \ker ad_{x}
\end{align}
or
\begin{align}\label{e3.101}
2&=b(x)\cr
&=\dim L-\dim \ker ad_{x}\cr
& =\dim L-(\dim C_{L}(A)-\dim C_{L}(A)+\dim C_{L}(A)\cap \ker ad_{x})\cr
&=2-\dim C_{L}(A)/ C_{L}(A)\cap \ker ad_{x}.
\end{align}
The equality \eqref{e3.10} is a contradiction. The equality \eqref{e3.101} implies that $ C_{L}(A)= C_{L}(A)\cap \ker ad_{x} $ and so 
$ [C_{L}(A),x]=0. $ Hence $ [a,x]=0 $ which is a contradiction.
By using a similar way we have $ [C_{L}(A),y]=0$ and we get a contradiction. Therefore $ \lbrace [x, y], [a,x], [a, y] \rbrace  $ is  linearly dependent and so $ \dim [L,L]=2. $\\
Case 2. If $C_{L}(A)  $ is abelian, then $ C_{L}(A)=A. $ Since  $ L/ C_{L}(A)=2,$ $ x,y\in L\setminus C_{L}(A) $ and  $ [x, A]\cap [y, A]=\lbrace 0\rbrace, $ we have $ [A, L\setminus \langle x, y \rangle]=0. $ Hence  it is sufficient to show that $ \lbrace [x, y], [a,x], [a, y] \rbrace  $ is  linearly dependent. Now, by using a similar way in Case 1 one can see $ \lbrace [x, y], [a,x], [a, y] \rbrace  $ is  linearly dependent, so $ \dim [L,L]=2. $
\end{proof}
\begin{prop}\label{pro3.10}
Let $L $ be a finite-dimensional  nilpotent Lie superalgebra  and $ A $ be a maximal abelian ideal of $ L $ with $ b(L)=(2,0) $ and $ b_A(L)=(1,0). $ Then one of the following holds.
\begin{itemize}
\item[(i).] $ \dim A / Z(L)=1 $ and $ \dim L/Z(L)\leq 3, $
\item[(ii).] $ \dim [L,L]=(2,0). $
\end{itemize} 
\end{prop}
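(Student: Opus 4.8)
The plan is to run a case analysis on the codimension $\dim L/C_{L}(A)$, routing each branch into one of the already established Propositions \ref{pro3.8}, \ref{pro3.9} and \ref{prr3.10}. First I would collect three elementary facts that drive everything. Since $b_{A}(L)=(1,0)\neq(0,0)$, not every element centralizes $A$, so $C_{L}(A)\neq L$ and $\dim L/C_{L}(A)\geq 1$. Since $C_{L}(A)$ is a graded ideal containing the maximal abelian ideal $A$, whenever $C_{L}(A)$ is abelian we must have $C_{L}(A)=A$. Finally, $b(L)=(2,0)$ forces $[L_{\overline{0}},L_{\overline{1}}]=0$, hence $[L,L]\subseteq L_{\overline{0}}$ (as already used in the proof of Proposition \ref{Pr3.4}), so the bare statement $\dim[L,L]=2$ coincides with $\dim[L,L]=(2,0)$. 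I will also use that $Z(L)\subseteq A$ (because $A+Z(L)$ is an abelian ideal, forcing $A+Z(L)=A$ by maximality), so the tower $Z(L)\subseteq A\subseteq L$ gives $\dim L/Z(L)=\dim L/A+\dim A/Z(L)$.

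The case $\dim L/C_{L}(A)\geq 2$ is immediate: the hypotheses of Proposition \ref{prr3.10} hold (with $b(L)=2$ and $b_{A}(L)=1$), so $\dim[L,L]=2=(2,0)$, which is conclusion (ii).

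It remains to treat $\dim L/C_{L}(A)=1$. The pivotal preliminary step is to verify the hypothesis $\ker ad_{x}|_{A}=\ker ad_{y}|_{A}$ for all $x,y\in L\setminus C_{L}(A)$ required by Proposition \ref{pro3.8}: fixing a homogeneous $u\notin C_{L}(A)$, every $x\in L\setminus C_{L}(A)$ has the form $x=c+\lambda u$ with $c\in C_{L}(A)$ and $\lambda\neq 0$, whence $ad_{x}|_{A}=\lambda\, ad_{u}|_{A}$ and the kernels agree. Proposition \ref{pro3.8}(i) then yields $\dim A/Z(L)=1$. Now I split on $C_{L}(A)$. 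If $C_{L}(A)$ is abelian, then $C_{L}(A)=A$, so $\dim L/A=\dim L/C_{L}(A)=1$ and $\dim L/Z(L)=1+1=2\leq 3$, giving conclusion (i). If $C_{L}(A)$ is non-abelian, then Proposition \ref{pro3.9} applies: its part (ii) gives $\dim[L,L]=(2,0)$ when $\dim C_{L}(A)/A\geq 2$ (conclusion (ii)), while its part (i) gives $\dim L/A=2$ when $\dim C_{L}(A)/A=1$, and combined with $\dim A/Z(L)=1$ this gives $\dim L/Z(L)=2+1=3\leq 3$ (conclusion (i)).

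The main obstacle is organizational rather than computational: ensuring the double case division over $\dim L/C_{L}(A)$ and over the (non)commutativity of $C_{L}(A)$ is exhaustive, and checking in the $\dim L/C_{L}(A)=1$ branch that the kernel-coincidence hypothesis of Proposition \ref{pro3.8} genuinely holds. That verification is exactly what unlocks the value $\dim A/Z(L)=1$, and hence both the equality and the bound $\dim L/Z(L)\leq 3$ in conclusion (i).
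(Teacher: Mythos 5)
Your proof is correct and takes essentially the same route as the paper's: the same case division ($\dim L/C_{L}(A)\geq 2$ handled by Proposition \ref{prr3.10}; $\dim L/C_{L}(A)=1$ with $C_{L}(A)$ non-abelian handled by Proposition \ref{pro3.9}(i)--(ii); $C_{L}(A)$ abelian forcing $C_{L}(A)=A$ handled directly), combined with Proposition \ref{pro3.8}(i) to obtain $\dim A/Z(L)=1$ and hence $\dim L/Z(L)\leq 3$. The only substantive difference is that you explicitly verify the hypothesis $\ker ad_{x}|_{A}=\ker ad_{y}|_{A}$ of Proposition \ref{pro3.8} in the codimension-one case (via $x=c+\lambda u$, $c\in C_{L}(A)$, $\lambda\neq 0$, so $ad_{x}|_{A}=\lambda\, ad_{u}|_{A}$), a check the paper invokes without comment — a small tightening, not a different method.
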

\begin{proof}
Case 1.  Let $ C_{L}(A)$ be non-abelian. If  $ \dim L/C_{L}(A)=1 $ and $ \dim C_{L}(A)/A=1, $ then   $ L=C_{L}(A)\oplus \langle u \rangle $  for $ u\in L\setminus  C_{L}(A)$ and so by using Proposition \ref{pro3.8}(i) and the third isomorphism theorem, we have  
\begin{equation}\label{e3.11}
\dim L/Z(L)=\dim L/A+\dim A/Z(L)=\dim L/A+1. 
\end{equation} 
On the other hand, we know that $ \dim L/A=2 $ by using Proposition   \ref{pro3.9}(i). Thus  $ \dim L/Z(L)=3. $\\
If  $ \dim L/C_{L}(A)=1 $ and $ \dim C_{L}(A)/A\geq 2, $ then $ \dim [L, L]=(2,0) $ by using Proposition \ref{pro3.9}(ii). \\
Let $ \dim L/C_{L}(A)\geq 2. $ Then   $ \dim [L,L]=(2,0) $ by using Proposition \ref{prr3.10}. 
 \\
Case 2. Let $ C_{L}(A) $ be abelian. Since $ A $ is a maximal abelian ideal and $ A\subseteq C_{L}(A), $ we have $ C_{L}(A)=A. $ 
Let $ \dim L/A\geq 2. $ Then   $ \dim [L,L]=(2,0) $ by using Proposition \ref{prr3.10}. \\
If $ \dim L/A=1, $ then $ L=A\oplus \langle x\rangle. $ 
 Also, $ \dim A/Z(L)=1 $ by using Proposition \ref{pro3.8}(i).
Hence  $ \dim L/Z(L)= 2 $ by using \eqref{e3.11}. \\
 Therefore (i) and (ii) are obtained by using Case 1 and Case 2.
\end{proof}
\begin{prop}\label{pro.131}
Let $L $ be a finite-dimensional  nilpotent Lie superalgebra  and $ A $ be a maximal abelian ideal of $ L $ with $ b(L)=(0,2) $ and $ b_A(L)=(0,1). $ Then $ \dim [L,L]=(0,2). $
\end{prop}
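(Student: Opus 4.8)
The plan is to follow the proof of the even analogue, Proposition \ref{pro3.10}, while exploiting the fact that in the odd-breadth case every homogeneous element squares to zero; this collapses the two configurations that produced the extra alternative there. First I would record the structural consequences of $ b(L)=(0,2). $ Exactly as in the proof of Lemma \ref{lem3.1}, the vanishing of the even coordinate of the superbreadth forces $ [L_{\overline{0}}, L_{\overline{0}}]=0 $ and $ [L_{\overline{1}}, L_{\overline{1}}]=0, $ so that $ [L,L]=[L_{\overline{0}}, L_{\overline{1}}]\subseteq L_{\overline{1}} $ and $ \dim[L,L]=(0,m) $ for some $ m. $ Choosing $ x $ with $ b(x)=(0,2) $ shows $ m\geq 2, $ so it suffices to prove $ m\leq 2; $ and since $ [L,L]\subseteq L_{\overline{1}}, $ any conclusion of the form $ \dim[L,L]=2 $ will automatically read $ (0,2). $ I would also note at the outset that $ [w,w]=0 $ for every homogeneous $ w\in L $ (odd $ w $ because $ [L_{\overline{1}}, L_{\overline{1}}]=0, $ even $ w $ by graded skew-symmetry).

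Next I would split into the same cases as in Proposition \ref{pro3.10}, keyed on whether $ C_{L}(A) $ is abelian and on the codimensions $ \dim L/C_{L}(A) $ and $ \dim C_{L}(A)/A. $ Whenever $ \dim L/C_{L}(A)\geq 2, $ Proposition \ref{prr3.10} gives $ \dim[L,L]=2, $ hence $ (0,2). $ If $ C_{L}(A) $ is abelian then $ C_{L}(A)=A $ by maximality; the subcase $ \dim L/A\geq 2 $ is again settled by Proposition \ref{prr3.10}, while $ \dim L/A=1 $ cannot occur: writing $ L=A\oplus\langle u\rangle $ with $ u $ homogeneous, the relations $ [A,A]=0 $ and $ [u,u]=0 $ force the image of every $ ad_{w} $ into $ ad_{u}(A), $ so $ b(w)\leq b_{A}(L)=1 $ for all $ w, $ contradicting $ b(L)=(0,2). $ This is precisely the step where the even case instead produced the alternative $ \dim L/Z(L)=2, $ the difference being that there $ [u,u] $ could be nonzero.

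The remaining case is $ C_{L}(A) $ non-abelian with $ \dim L/C_{L}(A)=1. $ Writing $ C_{L}(A)=A\oplus H, $ if $ \dim H=1 $ I would take $ H=\langle v\rangle $ with $ v $ homogeneous and observe that $ [v,v]=0 $ together with $ [v,A]=0 $ makes $ C_{L}(A) $ abelian, a contradiction; so this configuration, which gave $ \dim L/Z(L)=3 $ in the even case, is also vacuous here. If instead $ \dim H\geq 2, $ I would run the odd analogue of Proposition \ref{pro3.9}(ii): using Proposition \ref{pro3.8}(i) to get $ \dim A/Z(L)=1 $ and an element $ v\in A\setminus Z(L) $ with $ b(v)=(0,1), $ then analysing $ u\in L\setminus C_{L}(A) $ to show first that $ [H,u]=0 $ and then that the internal brackets of $ H $ together with $ [v,u] $ span only a two-dimensional space, any third independent bracket producing an element of superbreadth $ 3. $ Because $ [u,u]=0 $ automatically, only the vanishing-square subcases of that argument survive, and they yield $ \dim[L,L]=(0,2). $

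Collecting the cases gives $ \dim[L,L]=(0,2) $ in every admissible configuration. The main obstacle is the final case $ \dim H\geq 2 $: it requires transcribing the breadth bookkeeping of Proposition \ref{pro3.9}(ii) to the odd setting and checking, case by case on $ b(u)\in\lbrace(0,1),(0,2)\rbrace, $ that no choice of a bracket independent from $ \lbrace[v,u],[v_{1},v_{2}]\rbrace $ is compatible with $ b(L)=(0,2). $ Everything else is either a direct citation of Proposition \ref{prr3.10} or a short contradiction extracted from $ [w,w]=0. $
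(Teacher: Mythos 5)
Your proposal reaches the correct conclusion and its abelian-case analysis is sound, but it misses the one observation the paper's proof is built on: Lemma \ref{lem3.1} applies here \emph{verbatim}. Its hypotheses are exactly $b(L)=(0,r)$ with $r\geq 2$, $L$ nilpotent, $A$ a maximal abelian ideal, so $C_{L}(A)=A$ outright. Ironically, you cite the \emph{proof} of that lemma for the identities $[L_{\overline{0}},L_{\overline{0}}]=[L_{\overline{1}},L_{\overline{1}}]=0$, yet overlook that its \emph{statement} makes your entire non-abelian branch vacuous: both the subcase $\dim C_{L}(A)/A=1$, which you do dispatch correctly with the $[v,v]=0$ trick (in effect re-proving a special case of the lemma), and the subcase $\dim C_{L}(A)/A\geq 2$, which you yourself flag as ``the main obstacle'' and leave as an unexecuted transcription of Proposition \ref{pro3.9}(ii). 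As written, that unexecuted transcription is the only genuine gap in your text, but it is a gap inside a case that cannot occur, so the repair is a one-line citation of Lemma \ref{lem3.1} rather than a page of breadth bookkeeping.

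With that repair your proof coincides with the paper's skeleton --- $C_{L}(A)=A$, then $\dim L/A\geq 2$ settled by Proposition \ref{prr3.10} (and $\dim[L,L]=2$ reads $(0,2)$ since $[L,L]=[L_{\overline{0}},L_{\overline{1}}]\subseteq L_{\overline{1}}$) --- except in the subcase $\dim L/A=1$, where your route differs from the paper's and is arguably cleaner. The paper gets $\dim A/Z(L)=1$ from Proposition \ref{pro3.8}(i), hence $\dim L/Z(L)=2$, then argues that $\dim[L,L]=\dim[L_{\overline{0}},L_{\overline{1}}]$ forces $\dim L/Z(L)=(1,1)$, so $L=\langle Z(L),v_1,v_2\rangle$ with $v_1$ even and $v_2$ odd, giving $\dim[L,L]\leq 1$ and a contradiction with $b(L)=2$ via Lemma \ref{l0} and Proposition \ref{pr1.1}. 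You instead observe directly that with $L=A\oplus\langle u\rangle$, $u$ homogeneous, the relations $[A,A]=0$ and $[u,u]=0$ confine $\mathrm{Im}\, ad_{w}$ to $[u,A]$ for every $w\in L$, whence $b(L)\leq b_{A}(L)=1$, a contradiction; this is shorter and does not need Proposition \ref{pro3.8} at all. So: invoke Lemma \ref{lem3.1} as a statement, delete the non-abelian branch, and keep your codimension-one argument.
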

\begin{proof}
Since  $ C_{L}(A)=A $ by using Lemma \ref{lem3.1}, we have $ \dim L/Z(L)=2 $ or $ \dim [L,L]=(0,2) $ by a similar argument  in the proof of Case 2  in  Proposition  \ref{pro3.10}. Assume that  $ \dim L/Z(L)=2. $ Since $ b(L)=(0,2), $ we have $ \dim [L,L]=\dim [L_{\overline{0}}, L_{\overline{1}}]. $ Hence $ \dim L/Z(L)=(1,1) $ and so $ L/Z(L)=\langle \overline{v}_{1}, \overline{v}_{2} \rangle.$  Let $ L=\langle Z(L), v_1, v_2 \mid  v_1\in L_{\overline{0}}, v_2\in L_{\overline{1}} \rangle.$ Then $ \dim [L,L]\leq 1 $ which is a contradiction by using Lemma \ref{l0} and Proposition \ref{pr1.1}. Therefore $ \dim [L,L]=(0,2). $
\end{proof}
\begin{prop}\label{pr3.14}
Let $L $ be a finite-dimensional  nilpotent Lie superalgebra  and $ A $ be a maximal abelian ideal of $ L $ with $ b(L)=(1,1) $ and $ b_A(L)=(r,s) $ such that $ r+s=1. $ Then one of the following holds.
\begin{itemize}
\item[(ii).] $ \dim A / Z(L)=(r,s) $ such that $ r+s=1 $ and $ \dim L/Z(L)\leq 3. $
\item[(iii).] $ \dim [L,L]=(1,1). $
\end{itemize} 
\end{prop}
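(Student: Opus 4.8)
The plan is to mirror the proof of Proposition~\ref{pro3.10} almost line for line, replacing each appeal to Proposition~\ref{pro3.9} by the corresponding part of Proposition~\ref{pro3.99}, and then doing the small amount of $\mathbb{Z}_2$-grading bookkeeping that upgrades a bare dimension count $\dim[L,L]=2$ to the graded statement $\dim[L,L]=(1,1)$. The first split is on whether $C_L(A)$ is abelian.

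Suppose first that $C_L(A)$ is non-abelian. I would then split on $\dim L/C_L(A)$. If $\dim L/C_L(A)=1$ and $\dim C_L(A)/A=1$, then Proposition~\ref{pro3.99}(i) gives $\dim L/A=2$, while Proposition~\ref{pro3.8}(i) (whose kernel hypothesis holds automatically because $L=C_L(A)\oplus\langle u\rangle$ is one-dimensional over $C_L(A)$) gives $\dim A/Z(L)=(r,s)$ with $r+s=1$; the third isomorphism theorem, exactly as in \eqref{e3.11}, then yields $\dim L/Z(L)=\dim L/A+\dim A/Z(L)=3$, so the first listed alternative holds. If instead $\dim C_L(A)/A\geq 2$, then Proposition~\ref{pro3.99}(ii) immediately gives $\dim[L,L]=(1,1)$. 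Finally, if $\dim L/C_L(A)\geq 2$, then since $b(L)=2$ and $b_A(L)=1$ Proposition~\ref{prr3.10} applies and gives $\dim[L,L]=2$.

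Now suppose $C_L(A)$ is abelian. Maximality of $A$ forces $C_L(A)=A$. If $\dim L/A\geq 2$, Proposition~\ref{prr3.10} again gives $\dim[L,L]=2$; if $\dim L/A=1$, then $L=A\oplus\langle x\rangle$ and Proposition~\ref{pro3.8}(i) gives $\dim A/Z(L)=(r,s)$ with $r+s=1$, whence $\dim L/Z(L)=\dim L/A+\dim A/Z(L)=2$ by \eqref{e3.11}, again the first alternative.

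The only genuinely new point, compared with Proposition~\ref{pro3.10}, is turning the output $\dim[L,L]=2$ of Proposition~\ref{prr3.10} into the graded statement $\dim[L,L]=(1,1)$ demanded by the second alternative, and this is where I expect the (mild) obstacle to lie, since the rank propositions only control total dimensions. I would settle it with Lemma~\ref{ll}: because $b(L)=(1,1)$ there is $x\in L$ with $b(x)=(1,1)$, so the graded subspace $[x,L]\subseteq[L,L]$ already contributes at least one even and one odd dimension, giving $\dim[L,L]_{\overline{0}}\geq 1$ and $\dim[L,L]_{\overline{1}}\geq 1$; together with $\dim[L,L]=2$ this forces $\dim[L,L]=(1,1)$. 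Collecting the cases then yields one of the two stated alternatives.
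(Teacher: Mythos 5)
Your proposal is correct and takes essentially the same approach as the paper, whose entire proof reads ``similar to the proof of Proposition~\ref{pro3.10} by using Propositions~\ref{pro3.99} and \ref{prr3.10}'' --- you carry out exactly that substitution, case split and all. Your two additions --- verifying that the kernel hypothesis of Proposition~\ref{pro3.8} holds automatically when $\dim L/C_{L}(A)=1$, and upgrading the ungraded conclusion $\dim [L,L]=2$ of Proposition~\ref{prr3.10} to $\dim [L,L]=(1,1)$ via Lemma~\ref{ll} --- are details the paper leaves implicit, and both are sound.
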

\begin{proof}
 The proof is  similar to the proof of  Proposition \ref{pro3.10} by using Propositions \ref{pro3.99} and \ref{prr3.10}. 
 \end{proof}

\begin{lem}\label{lem3.15}
Let $L $ be a finite-dimensional  nilpotent Lie superalgebra and $ A $ be a maximal abelian ideal of $ L $ with $ b(L)=b_A(L)=(r,s) $ such that $ r+s=2. $   Then  $ \dim [C_{L}(A), L]=(r,s) $ such that $ r+s=2. $
\end{lem}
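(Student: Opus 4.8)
The plan is to pin down a single two-dimensional subspace that contains every bracket $[c,y]$ with $c\in C_L(A)$, and then to show that this subspace coincides with $[C_L(A),L]$. First I would record the structural facts I may use: since $A$ is abelian it centralises itself, so $A\subseteq C_L(A)$, and since $A$ is graded and each $ad_{a}$ (for homogeneous $a\in A$) is a graded map, $C_L(A)=\cap_{a\in A}\ker ad_{a}$ is a graded subspace. Because $b_A(L)=(r,s)$ with $r+s=2$, I would choose a homogeneous $x_0\in L$ with $b_A(x_0)=(r,s)$ together with homogeneous $a_1,a_2\in A$ so that $u_1:=[x_0,a_1]$ and $u_2:=[x_0,a_2]$ form a homogeneous basis of $[x_0,A]$; by the meaning of $b_A(x_0)=(r,s)$ the graded subspace $[x_0,A]=\langle u_1,u_2\rangle$ has graded dimension $(r,s)$. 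Note that $x_0\notin C_L(A)$, since $[x_0,A]\neq 0$.

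Next I would establish the two inclusions. For the lower bound, $a_1,a_2\in A\subseteq C_L(A)$ gives $u_1,u_2\in[C_L(A),L]$, so $[x_0,A]\subseteq[C_L(A),L]$ and the even, resp.\ odd, part of $[C_L(A),L]$ already has dimension at least $r$, resp.\ $s$. For the key preliminary observation, I would note that $[x_0,L]$ has total dimension $b(x_0)\le 2$ (as $b(L)=(r,s)$ with $r+s=2$), while $u_1,u_2\in[x_0,L]$ are independent; hence $[x_0,L]=\langle u_1,u_2\rangle$ and, in particular, $[x_0,y]\in\langle u_1,u_2\rangle$ for every $y\in L$. This rigidity of the maximiser $x_0$ is exactly what drives the argument.

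The main step is the reverse inclusion $[C_L(A),L]\subseteq\langle u_1,u_2\rangle$, and this is where the real work lies. Since $C_L(A)$ and $L$ are graded, it suffices to treat homogeneous $c\in C_L(A)$ and $y\in L$. Suppose some such bracket satisfies $[c,y]\notin\langle u_1,u_2\rangle$. I would then test the element $x_0+c$: because $c\in C_L(A)$ one has $[x_0+c,a_1]=u_1$ and $[x_0+c,a_2]=u_2$, whereas $[x_0+c,y]=[x_0,y]+[c,y]$ lies outside $\langle u_1,u_2\rangle$ (since $[x_0,y]\in\langle u_1,u_2\rangle$ but $[c,y]\notin\langle u_1,u_2\rangle$). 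Thus $u_1,u_2,[x_0+c,y]$ are three independent elements of $[x_0+c,L]$, forcing $b(x_0+c)\ge 3$ and contradicting $b(L)=2$. Hence $[c,y]\in\langle u_1,u_2\rangle$ in all cases, so $[C_L(A),L]\subseteq\langle u_1,u_2\rangle=[x_0,A]$. Combined with the lower bound this yields $[C_L(A),L]=[x_0,A]$, whose graded dimension is $(r,s)$ with $r+s=2$.

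I expect the main obstacle to be precisely the collapse carried out in the last paragraph: a priori $[C_L(A),L]$ could grow as $c$ ranges over all of $C_L(A)$, and the whole point is that adding any such $c$ to the full-breadth element $x_0$ cannot create breadth beyond $2$. The trick succeeds because $a_1,a_2$ are annihilated by $c$, so the two test directions $u_1,u_2$ survive untouched in $ad_{x_0+c}$, while $x_0$ has already exhausted its breadth inside $\langle u_1,u_2\rangle$. Two small points I would be careful to state cleanly are that breadth is read as the total dimension of the image for the possibly inhomogeneous element $x_0+c$ when deriving the contradiction, and that the reduction to homogeneous $c,y$ is legitimate because $C_L(A)$ is a graded subspace.
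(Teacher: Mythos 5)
Your proof is correct and reaches the conclusion by a genuinely more direct route than the paper's. The paper fixes $x$ with $b_A(x)=2$ and first proves $D_x=L$, i.e.\ $L=C_{L}(A)+\ker ad_{x+a}$ for every $a\in C_{L}(A)$, using that $[A,C_{L}(A)]=0$ forces $\mathrm{Im}\,ad_{x+a}|_{A}=\mathrm{Im}\,ad_{x+a}$; it then compares the two decompositions $y=a'+y'=a''+y''$ to place $[a,y]$ in $[C_{L}(A),x]$, and finishes with the dimension count $\dim[C_{L}(A),L]=\dim[C_{L}(A),x]$. Your argument rests on exactly the same rigidity --- adding $c\in C_{L}(A)$ to the maximizer does not disturb $ad|_{A}$, whose image already exhausts the available breadth --- but you deploy it contrapositively: if some $[c,y]$ escaped $\langle u_1,u_2\rangle=[x_0,A]$, then $u_1,u_2,[x_0+c,y]$ would be three linearly independent vectors in $\mathrm{Im}\,ad_{x_0+c}$, contradicting $b(L)=2$. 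This buys you the stronger and cleaner identity $[C_{L}(A),L]=[x_0,A]=\mathrm{Im}\,ad_{x_0}$ in a few lines, with none of the $M_x$/$D_x$ formalism set up before Proposition \ref{pro3.8}; it also sidesteps a soft spot in the paper's computation, where the chain $[a,y]=[a,a'+y']=[a,y']$ silently discards the term $[a,a']$, which need not vanish when $C_{L}(A)$ is non-abelian. What the paper's longer route buys in exchange is the structural statement $D_x=L$ itself, phrased in the kernel-decomposition language it uses elsewhere.

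One step you should not assert: that $x_0$ can be chosen \emph{homogeneous} with $b_A(x_0)=(r,s)$. A breadth maximizer may be forced to be inhomogeneous --- the paper itself contemplates exactly this in Case 2 of the proof of Proposition \ref{pr3.6}, where no element of $H_{\overline{0}}$ or $H_{\overline{1}}$ attains superbreadth $(1,1)$ --- so homogeneity of $x_0$ is unjustified in general. Fortunately your argument never actually uses it: the perturbation step is parity-free; the reduction to homogeneous $c$ and $y$ needs only that $C_{L}(A)$ and $L$ are graded, which they are; and the graded dimension $(r,s)$ of $[C_{L}(A),L]$ is read off from $b_A(x_0)=(r,s)$ exactly as the paper does (note that $[C_{L}(A),L]$ is graded because $C_{L}(A)$ is a graded ideal, so the identification with $[x_0,A]$ makes the latter graded a posteriori, even for inhomogeneous $x_0$). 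Simply drop the word ``homogeneous'' for $x_0$, keep $a_1,a_2\in A$ chosen so that $[x_0,a_1],[x_0,a_2]$ span $[x_0,A]$, and the proof stands.
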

\begin{proof}
Since $ b_{A}(L)=(r,s)$ such that $ r+s=2, $ there exists $ x\in L\setminus C_{L}(A)$ such that $ b_{A}(x)=(r,s)$ such that $ r+s=2. $ Clearly, $ C_{L}(A)\subseteq D_x \subseteq L. $ We show that $ D_x=L. $ Since $ [A, C_{L}(A)]=0, $ we have $ b_{A}(x)=b_{A}(x+a)=(r,s)=b(L) $ such that $ r+s=2 $ and for all $ a\in C_{L}(A). $ Hence $ \mathrm{Im}~ad_{a+x}|_{A}=\mathrm{Im}~ad_{a+x} $ for all $ a\in C_{L}(A). $ Therefore there exists $ b\in C_{L}(A) $ such that  $ [x+a, y]=[x+a, b] $ for all $ y\in L. $ 
Hence $ y-b\in \ker ad_{a+x} $ and so $ y=b+(y-b)\in C_{L}(A)+\ker ad_{x+a} $ for all $ a\in C_{L}(A). $ Therefore $ D_x=L. $ Suppose that $ a\in C_{L}(A) $ and $ y\in L=D_x, $ thus we can see $ y=a'+y'=a''+y'' $ such that $ a', a''\in C_{L}(A),$  $y'\in \ker ad_{a+x}$ and $y''\in \ker ad_{x}.$ Then $ a'-a''=y'-y''\in C_{L}(A). $ Also, $ [a+x, y']=0=[x, y''] $ for all $a\in C_{L}(A), $ so $ [a, y]=[a, a'+y']=[a,y']=-(-1)^{\mid y'-y''\mid \mid x \mid}[y'-y'', x]\in [C_{L}(A),x] $ for all $ a\in C_{L}(A). $  Hence $ [C_{L}(A), L]\subseteq [C_{L}(A), x] $ and we have $ \dim [C_{L}(A), L]\leq \dim [C_{L}(A), x]. $ On the other hand, we know that $ \dim [C_{L}(A), x]\leq \dim [C_{L}(A),L]. $ Therefore
$ \dim  [C_{L}(A), x]= \dim [C_{L}(A), L]. $
 Now, since $ b(L)=b_{A}(x)=2, $ we have 
 $ \dim [C_{L}(A), L]=(r, s) $  such that $ r+s=2. $
\end{proof}
\begin{prop}\label{pr3.16}
Let $L $ be a finite-dimensional  nilpotent Lie superalgebra  and $ A $ be a maximal abelian ideal of $ L $ with $ b(L)=b_A(L)=(r,s) $ such that $ r+s=2. $ Then $ \dim [L,L]=(r,s) $ such that $ r+s=2. $ 
\end{prop}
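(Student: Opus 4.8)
The plan is to establish the two inclusions $[C_L(A),L]\subseteq[L,L]$ and $[L,L]\subseteq[C_L(A),L]$, so that $[L,L]=[C_L(A),L]$ and the statement follows at once from Lemma~\ref{lem3.15}. The first inclusion is immediate because $C_L(A)\subseteq L$, and Lemma~\ref{lem3.15} already gives $\dim[C_L(A),L]=(r,s)$ with $r+s=2$; in particular $\dim[L,L]\ge 2$. Put $W:=[C_L(A),L]$, which is a graded ideal of $L$ since $C_L(A)$ is an ideal of $L$. All the work lies in proving $[L,L]\subseteq W$, for which it suffices to check that $[t,L]\subseteq W$ for every homogeneous $t\in L$. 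I would first fix, exactly as in the proof of Lemma~\ref{lem3.15}, a homogeneous $x$ with $b_A(x)=(r,s)$, $r+s=2$; since $[x,A]\subseteq[C_L(A),L]=W$ and $\dim[x,A]=b_A(x)=2=\dim W$, this forces $[x,A]=W$, and then $b_A(x)\le b(x)\le b(L)=2$ gives $b(x)=2$ and hence $[x,L]=[x,A]=W$. Observe also that for every $t$ one has $[t,A]=[A,t]\subseteq[C_L(A),L]=W$, because $A\subseteq C_L(A)$.

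Next I would split into cases according to the value of $b_A(t)$. If $t\in C_L(A)$, then $[t,L]\subseteq[C_L(A),L]=W$. If $t\notin C_L(A)$ and $b_A(t)=2$, then $2=b_A(t)\le b(t)\le b(L)=2$ forces $b(t)=2$, so $[t,L]=[t,A]\subseteq W$. The only remaining possibility for $t\notin C_L(A)$ is $b_A(t)=1$, and this is the crux. Assume for contradiction that in this case $[t,L]\not\subseteq W$; then choose $w'\in L$ with $[t,w']\notin W$ and $a\in A$ with $[t,a]\neq 0$, noting that $[t,a]\in W$.

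The hard step is to manufacture from $t$ an element of breadth at least $3$. Consider the pencil $ad_{t+\lambda x}|_A=ad_{t}|_A+\lambda\, ad_{x}|_A$ of maps $A\to W$. Since $ad_{x}|_A$ is onto $W$ (as $[x,A]=W$) and the family is affine in $\lambda$, its rank equals $2$ for all but finitely many $\lambda$ (the $2\times2$ minors are polynomials in $\lambda$ that are not all identically zero, and the ground field is infinite); fixing such a $\lambda$, I can pick $a_1,a_2\in A$ with $[t+\lambda x,a_1]$ and $[t+\lambda x,a_2]$ linearly independent in $W$. At the same time $[t+\lambda x,w']=[t,w']+\lambda[x,w']$ lies outside $W$, since $[x,w']\in[x,L]=W$ while $[t,w']\notin W$. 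Hence $[t+\lambda x,a_1]$, $[t+\lambda x,a_2]$, $[t+\lambda x,w']$ are three linearly independent elements, giving $b(t+\lambda x)\ge 3$, which contradicts $b(L)=2$. Therefore $[t,L]\subseteq W$ in every case, so $[L,L]=W=[C_L(A),L]$ and $\dim[L,L]=(r,s)$ with $r+s=2$. The main obstacle is precisely this last case: the argument goes through only because every cross term, namely $[t,A]$, $[x,A]$ and $[x,w']$, already sits inside $W$, so that perturbing $t$ by the $A$-surjective element $x$ recovers two independent directions inside $W$ while the genuinely new direction $[t,w']$ still escapes it.
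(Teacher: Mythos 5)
Your proof is correct, and while it shares the paper's skeleton---reduce everything to the equality $[L,L]=[C_L(A),L]$ and invoke Lemma~\ref{lem3.15}---the treatment of the decisive case $b_A(t)=1$ is genuinely different. The paper at this point proves the structural bound $\dim L/C_L(A)\leq 2$ (using an element $u\in A\setminus Z(L)$ and a dependence relation among $[x,u],[y,u],[z,u]$), rules out $\dim L/C_L(A)=1$, writes $L=C_L(A)\oplus\langle x,y\rangle$, and then runs a case analysis on $b(y)$ and on whether $[x,y]\neq 0$, treating the bracket $[y,y]$ separately by exhibiting elements such as $y+a$ or $y+b$ of breadth $3$. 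You replace all of this with a single perturbation argument: since the $A$-surjective element $x$ satisfies $[x,L]=[x,A]=W$ (a neat streamlining, by the way, of how Lemma~\ref{lem3.15} is used---your observation that $[x,A]\subseteq W$ with $\dim[x,A]=2=\dim W$ forces $[x,A]=W$ is cleaner than re-deriving it), the pencil $ad_{t+\lambda x}|_A$ has rank $2$ for generic $\lambda$, while $[t+\lambda x,w']\equiv[t,w']\not\equiv 0 \pmod W$ for every $\lambda$, giving $b(t+\lambda x)\geq 3$. This buys uniformity (the troublesome bracket $[t,t]$ is absorbed into the arbitrary $w'$, and no decomposition of $L$ or bound on $\dim L/C_L(A)$ is needed) at the cost of a field hypothesis the paper's dependence-only argument avoids: your minor polynomial has degree $\leq 2$, so you actually need only $|\mathbb{F}|\geq 3$ rather than an infinite field, but over $\mathbb{F}_2$ the genericity step can genuinely fail (e.g.\ with matrices $M_t=\mathrm{diag}(1,0)$, $M_x=I_2$ the pencil $M_t+\lambda M_x$ never attains rank $2$), so you should state this hypothesis---harmless in practice, since Lie superalgebras in characteristic $2$ are excluded anyway. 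One cosmetic slip: you call the element $x$ with $b_A(x)=2$ ``homogeneous,'' which is neither guaranteed by $b_A(L)=2$ nor used anywhere in your argument; simply drop the word.
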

\begin{proof}
Let $L $ be a Lie superalgebra with $ b(L)=b_A(L)=(r,s) $ such that $ r+s=2. $ We know that 
$ \dim [C_{L}(A),L]=(r,s) $ such that $ r+s=2 $  by using Lemma \ref{lem3.15} and  $ [C_{L}(A),L] \subseteq [L, L],$ so
it is sufficient to show that $ [L, L]\subseteq [C_{L}(A),L]. $   Assume that $ y\in L\setminus C_{L}(A). $ Then $ b_{A}(y)\leq 2. $ If $ b_{A}(y)=0, $ then $ y\in C_{L}(A) $ and it is a contradiction. 
If $ b_{A}(y)=2, $ then there are $ a, b\in A $ such that $ \lbrace [a, y], [b, y] \rbrace $ is linearly independent.  Since $  b_{A}(y)=b(y)=2, $
the set $ \lbrace [a, y], [b, y], [x,y] \rbrace  $ for $ x\in L $ such that $ [x, y]\neq 0 $ is linearly dependent. Hence $ [x, y]=\alpha [a, y]+ \beta [b, y] \in [C_{L}(A), L] $ for $ \alpha, \beta \in \mathbb{F}. $ 
Thus $ [y, L]\subseteq [C_{L}(A),L] $ for $ b_{A}(y)=2. $
 In the rest, we show that $ [y, L]\subseteq [C_{L}(A),L] $ for  $ b_{A}(y)=1. $ We claim that $ \dim L/C_{L}(A)\leq 2. $ Since $ b_{A}(L)=2, $  we have $ 2 \leq \dim A/Z(L) $ and so there is  $ u\in A\setminus Z(L). $ On the other hand, $ b(u)\leq 2 $ thus the set $ \lbrace [x, u], [y, u], [z, u]\rbrace $ is  linearly dependent for $ x,y,z\in L\setminus C_{L}(A). $  Hence $ [x, u]= \alpha [y, u]+\beta [z, u] $ and so $ x-\alpha y-\beta z \in C_{L}(A) $ for  $ \alpha, \beta \in \mathbb{F} $ and so $ \dim L/C_{L}(A)\leq 2. $ It is clear $ \dim L/C_{L}(A)\neq 0. $ Thus $1\leq \dim L/C_{L}(A)\leq 2. $ When $ \dim L/C_{L}(A)=1, $ we have $ L=C_{L}(A)\oplus \langle y \rangle.$ 
 But in this case we have $ b_A(L)=1. $
 Hence $ \dim L/C_{L}(A)=2 $ and so $ L=C_{L}(A)\oplus \langle x, y \rangle. $ Let $ b_{A}(x)=2 $ and $ b_{A}(y)=1. $ If $ b(y)=1, $  then $ [y, L]\subseteq [C_{L}(A),L]. $ 
 If $ b(y)=2 $ and $ [x, y]\neq 0, $ then $ \lbrace [x,a], [x, b] , [x, y] \rbrace$ is linearly dependent. Hence $ [x, y]=\alpha [x,a]+ \beta [x, b] \in [C_{L}(A), L].$ Also, if the set $ \lbrace [x,a], [x, b] , [y, y] \rbrace$ is linearly independent.  Thus $ b(y+a)=3 $ or $ b(y+b)=3. $ We get a contradiction. Hence  $ [y, y] \in [C_{L}(A), L]. $
 Assume that $ b(y)=2 $ and $ [x, y]=0. $ If $ [C_{L}(A), y]\neq [L,y], $ then 
   then $ [y, y]\neq 0 $ and the set $ \lbrace [x,a], [x, b] , [y, y] \rbrace$ is linearly independent.  Thus $ b(y+a)=3 $ or $ b(y+b)=3. $ We get a contradiction. Hence  the set $ \lbrace [x,a], [x, b] , [y, y] \rbrace$ is linearly dependent and $ [y,y] \in  [C_{L}(A), L].$
   If $ [C_{L}(A), y]= [L,y], $ then it is clear that  $ [y,y] \in  [C_{L}(A), L].$
Therefore $ [y, L]\subseteq [C_{L}(A),L] $ for all $ y\in L\setminus C_{L}(A) $ and so $ \dim [L,L]=\dim [C_{L}(A), L]=(r,s) $  such that $ r+s=2 $ by using Lemma \ref{lem3.15}. 
\end{proof}

\section{Main results}
In this section, we classify the structure of finite-dimensional  nilpotent Lie superalgebras $ L $ with $ b(L)=2. $
\begin{thm}\label{M1}
Let $ L $ be a finite-dimensional nilpotent Lie superalgebra. Then $ b(L)=(2, 0) $ if and only if one of the following conditions holds. 
\begin{itemize}
\item[(i).] $ \dim [L, L]=(2, 0), $
\item[(ii).] $ \dim [L,L]=(3,0) $ and $ \dim L/Z(L)=(0,2), $
\item[(iii).] $ \dim [L,L]=(3,0) $ and $ \dim L/Z(L)=(3,0). $
\end{itemize}   
\end{thm}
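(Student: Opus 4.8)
The theorem is an "if and only if" characterizing nilpotent Lie superalgebras with $b(L)=(2,0)$ by the structure of $[L,L]$ and $L/Z(L)$. The forward direction (from $b(L)=(2,0)$ to one of (i)--(iii)) is the substantive half; the converse should be a matter of checking that each listed structure actually realizes superbreadth $(2,0)$. My plan is to assume $b(L)=(2,0)$, fix a maximal abelian ideal $A$, and split the argument according to the value of $b_A(L)$, exactly paralleling the case machinery already built up in Section~3.

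First I would observe that since $b(L)=(2,0)$ we have $0\le b_A(L)\le(2,0)$, so $b_A(L)$ equals $(0,0)$, $(1,0)$, or $(2,0)$; the odd component must vanish because $[L,L]=[L_{\overline 1},L_{\overline 1}]\subseteq L_{\overline 0}$ in this regime. Each value is handled by a proposition already proved:
\begin{itemize}
\item[$\bullet$] If $b_A(L)=(0,0)$, then Proposition~\ref{Pr3.4} gives either $\dim[L,L]=(2,0)$ (case (i)) or $\dim[L,L]=(3,0)$ together with $\dim L/Z(L)=(0,2)$ (case (ii)).
\item[$\bullet$] If $b_A(L)=(1,0)$, then Proposition~\ref{pro3.10} gives either $\dim[L,L]=(2,0)$ (case (i)) or the alternative $\dim A/Z(L)=1$ with $\dim L/Z(L)\le 3$, which I must still resolve into case (iii).
\item[$\bullet$] If $b_A(L)=(2,0)$, then $b(L)=b_A(L)=(2,0)$, so Proposition~\ref{pr3.16} forces $\dim[L,L]=(2,0)$, i.e.\ case (i).
\end{itemize}

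The key remaining work is the $b_A(L)=(1,0)$ branch when Proposition~\ref{pro3.10}(i) holds rather than (ii): there I have $\dim A/Z(L)=1$ and $\dim L/Z(L)\le 3$, and I need to show this pins down $\dim[L,L]=(3,0)$ with $\dim L/Z(L)=(3,0)$, i.e.\ case (iii). The plan is to note $\dim L/Z(L)\ge 2$ from Theorem~\ref{th2.6}, and then argue that if $\dim L/Z(L)=2$ the superbreadth could be at most $1$, a contradiction, so $\dim L/Z(L)=3$; combined with $\dim A/Z(L)=1$ this reconstructs the three generators and lets me compute $\dim[L,L]=(3,0)$ directly. I expect this normalization step to be the main obstacle, since it requires carefully ruling out the mixed-parity degenerate configurations rather than just quoting a prior proposition.

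For the converse I would take each of (i)--(iii) and verify $b(L)=(2,0)$: exhibit an element $x$ with $b(x)=(2,0)$ and show no element achieves breadth $\ge 3$. For (i) and (ii) this follows from the dimension counts via Theorem~\ref{th2.6}, while for (iii) one checks directly on the three-dimensional quotient $L/Z(L)=(3,0)$ that the bracket has image of dimension exactly $(2,0)$ for a generic element and never larger. Assembling the forward cases together with the converse then yields the stated equivalence.
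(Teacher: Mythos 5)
Your overall skeleton coincides with the paper's: fix a maximal abelian ideal $A$, split on $b_A(L)\in\{(0,0),(1,0),(2,0)\}$, and quote Propositions \ref{Pr3.4}, \ref{pro3.10} and \ref{pr3.16}, with the converse handled via Lemma \ref{l0}, Proposition \ref{pr1.1} and Theorem \ref{th2.6}. The genuine gap is in your resolution of the $b_A(L)=(1,0)$ branch when Proposition \ref{pro3.10}(i) holds. You assert that $\dim L/Z(L)=2$ would force superbreadth at most $1$, hence a contradiction. That is the purely even (Lie-algebra) intuition, and it fails in the super setting: if $\dim L/Z(L)=(0,2)$ with odd generators $v_1,v_2$ modulo the center, the brackets $[v_1,v_1]$, $[v_1,v_2]$, $[v_2,v_2]$ can span up to three even dimensions, so $b(L)=(2,0)$ is entirely possible there. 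Indeed the paper's proof shows that in exactly this subcase one gets $\dim[L,L]=(2,0)$ or $\dim[L,L]=(3,0)$ with $\dim L/Z(L)=(0,2)$; so conclusion (ii) of the theorem arises inside the $b_A(L)=(1,0)$ branch as well, not only from Proposition \ref{Pr3.4}, and your contradiction step would throw away genuine algebras.

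The second half of your claim --- that once $\dim L/Z(L)=3$ the branch ``pins down'' $\dim[L,L]=(3,0)$, i.e.\ case (iii) --- is also false. The parity of $L/Z(L)$ can be $(3,0)$, $(2,1)$, $(1,2)$ or $(0,3)$, and the paper shows: parity $(2,1)$ forces $\dim[L,L]=(2,0)$ (since $[L_{\overline 0},L_{\overline 1}]=0$ kills the mixed brackets); parity $(1,2)$ is impossible (the even generator would be central); parity $(0,3)$ requires a nontrivial argument using $\dim A/Z(L)=1$ to show $[v_1,v_1]=0$ and $\dim\langle[v_1,v_3],[v_2,v_3]\rangle=1$, then rules out $\dim[L,L]=4$ and $3$ by exhibiting elements of breadth $3$, landing again in case (i); and even parity $(3,0)$ permits both $\dim[L,L]=(2,0)$ and $(3,0)$. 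So the branch can terminate in any of (i), (ii), (iii), and a ``direct computation'' yielding only (iii) cannot succeed; this parity-by-parity bracket analysis is the real content of the paper's Case 2 and is missing from your sketch. Your converse plan is essentially sound and matches the paper: for (ii) and (iii) one excludes $b(L)=(3,0)$ via Theorem \ref{th2.6} (using, in case (iii), that $L/Z(L)$ purely even implies $[x,x]=0$ for all $x$, so part (ii) of that theorem gives $\dim L/Z(L)\geq 4$), and excludes $b(L)\leq 1$ via Lemma \ref{l0} and Proposition \ref{pr1.1}.
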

\begin{proof}
First, let $ b(L)=(2,0). $ Since $ b(L)\neq (0,0), $  $ L $ is non-abelian and so $ Z(L)\subset L. $ On the other hand, $ L $ is nilpotent and so
$ Z(L)\neq 0 $ and $ L $ has a maximal abelian ideal. Let $ A $ be a maximal abelian ideal.
We know that $ b_{A}(L)\leq b(L)=(2,0), $ hence one can consider the following cases.\\
Case 1. If $ b_{A}(L)=(0,0), $ then $ \dim [L, L]=(2, 0) $ or $ \dim [L,L]=(3,0) $ and $ \dim L/Z(L)=(0,2) $ by using Proposition \ref{Pr3.4}.\\
Case 2. If $ b_{A}(L)=(1,0), $ then either  $ \dim L/Z(L)\leq 3 $ or $ \dim [L, L]=(2,0) $  by using Proposition \ref{pro3.10}.
  Assume that  $ \dim L/Z(L)\leq 3. $ If $ \dim L/Z(L)=1 $ and $ L/Z(L)=\langle \overline{v} \rangle, $ then $ L=\langle Z(L), v \rangle $ and so $ \dim [L,L]\leq 1. $ It is contradiction by using Lemma \ref{l0} and Proposition \ref{pr1.1}.
 Let $ \dim L/Z(L)=2. $ Then $ \dim L/Z(L) $ can be equal to $ (2,0), $ $ (1,1) $ or $ (0,2). $  If $ \dim L/Z(L)=(2,0) $ and 
 $ L/Z(L)=\langle \overline{v_1}, \overline{v_2}  \rangle, $ then $ L=\langle Z(L), v_1, v_2 \vert v_1, v_2 \in  L_{\overline{0}} \rangle$
  and so $ \dim [L,L]=(1,0). $ We get a contradiction by using  Proposition \ref{pr1.1}.\\
  If $ \dim L/Z(L)=(1,1) $ and $ L/Z(L)=\langle \overline{v_1}, \overline{v_2} \rangle, $ then 
  \begin{equation*}
  L=\langle Z(L), v_1, v_2 \vert   v_1\in L_{\overline{0}},  v_2 \in L_{\overline{1}} \rangle
  \end{equation*}
   and since $ \dim [L_{\overline{0}}, L_{\overline{1}}]=0, $ we have $ \dim [L,L]=(1,0). $ It is a contradiction by using  Proposition \ref{pr1.1}.
 Consider  $ \dim L/Z(L)=(0,2) $ and $ L/Z(L)=\langle \overline{v_1}, \overline{v_2} \rangle. $ Then 
 $ L=\langle Z(L), v_1, v_2 \vert v_1, v_2 \in  L_{\overline{1}} \rangle$
  and $ \dim [L,L]\leq 3. $ Since $ b(L)=(2,0), $ we have $ \dim [L,L]=(2,0) $ or  $   \dim [L,L]=(3,0) $ and  $ \dim L/Z(L)=(0,2). $ \\
Assume that $ \dim L/Z(L)=3.$ Hence once can see that $ \dim L/Z(L) $ is equal to $ (3,0), $ $ (2,1), $ $ (1,2) $ or $ (0,3). $ Consider the following cases.\\
Let $ \dim L/Z(L)=(3,0) $ and $ L/Z(L)=\langle \overline{v_1}, \overline{v_2}, \overline{v_3} \rangle. $ Then
\begin{equation*}
L=\langle Z(L), v_1, v_2, v_3 \vert  v_1,v_2, v_3\in L_{\overline{0}}  \rangle,
\end{equation*}
 so $ \dim [L,L]\leq 3. $ Since $ b(L)=(2,0), $ we have $ 2\leq \dim [L,L] \leq 3. $ Hence $ \dim [L, L]=(2,0) $ or $ \dim [L,L]=(3,0) $ and 
 $ \dim L/Z(L)=(3,0). $\\
If $ \dim L/Z(L)=(2,1) $ and $ L/Z(L)=\langle \overline{v_1}, \overline{v_2}, \overline{v_3} \rangle, $ then
\begin{equation*}
L=\langle Z(L), v_1, v_2 \vert  v_1, v_2 \in L_{\overline{0}}, v_3 \in L_{\overline{1}}  \rangle.
\end{equation*}
 On the other hand, $ \dim [L_{\overline{0}}, L_{\overline{1}}]=0 $ thus  $ [v_1, v_3]=[v_2, v_3]=0 $ and $ \dim [L, L]=(2,0). $  In the case  $\dim L / Z(L)=(1,2) $ and  $ L/Z(L)=\langle \overline{v_1}, \overline{v_2}, \overline{v_3} \rangle, $  then 
 $ L=\langle Z(L), v_1, v_2, v_3 \vert  v_1\in L_{\overline{0}}, v_2, v_3 \in L_{\overline{1}}  \rangle$
  and since $ \dim [L_{\overline{0}}, L_{\overline{1}}]=0, $ we have $ [v_1, v_2]=[v_1, v_3]=0. $ Hence $ v_1\in Z(L), $ we get a contradiction. Therefore this case does not happen. 
Assume that  $ \dim L/Z(L)=(0,3) $ and $ L/Z(L)=\langle \overline{v_1}, \overline{v_2}, \overline{v_3} \rangle. $  Then 
 $ L=\langle Z(L), v_1, v_2, v_3 \vert  v_1, v_2, v_3 \in L_{\overline{1}}  \rangle$ and $ \lbrace [v_1, v_1], [v_2, v_2], [v_3, v_3], [v_1, v_2], [v_1, v_3], [v_2, v_3] \rbrace $ is all brackets. 
Since $ b_{A}(L)=(1,0) $ and $ b(L)=(2,0), $ we have $ \dim A/Z(L)=1 $ by using Proposition \ref{pro3.10}. 
Let $ v_1\in A\setminus Z(L). $ Then $ [v_1, v_1]=0. $ 
Since  $ b_{A}(L)=(1,0), $  we have $ \dim \langle [v_1, v_3], [v_2, v_3] \rangle=1.$ Hence $ \dim [L,L]\leq 4. $ If $ \dim [L,L]=(4,0), $ then  $ b(v_3)=(3,0) $ or  $ b(v_2)=(3,0), $ It is a contradiction.
Let $ \dim [L, L]=(3,0). $ If $ [v_2, v_3]=0, $ then $ b(v_2+v_3) $ or $ b(v_1+v_3) $ is equal to $ (3,0). $ We get a contradiction. 
If $ [v_2, v_3]\neq 0, $ then  by considering different cases on brackets there exists $ x\in L/ Z(L)$ such that $ b(x)=(3,0) $  which is a contradiction.  Also, if $ \dim [L,L]\leq 1, $ we have contradiction by using Lemma \ref{l0} and Proposition \ref{pr1.1}.
Therefore $ \dim [L,L]=(2,0). $\\
Case 3. Let $ b_{A}(L)=(2,0). $ Then $ \dim [L, L]=(2,0) $ by using Proposition \ref{pr3.16}.\\
Conversely, let $ \dim [L,L]=(2,0). $ We know that $ b(L)\leq  \dim [L,L],$ hence $ b(L)\leq 2. $ Also, $ b(L)$ is not equal to $ 0 $ and $ 1 $ by using Lemma \ref{l0} and Proposition \ref{pr1.1}. Thus $ b(L)=(2,0). $\\
 If $ \dim [L,L]=(3,0) $ and $ \dim L/Z(L)=(0,2), $ then $ b(L)\leq 3. $ By using Lemma \ref{l0} and Proposition \ref{pr1.1} $ b(L) $ is not equal to  $  0 $ and $ 1. $ Now, if $ b(L)=(3,0) $ then $ 3\leq \dim L/Z(L) $ by using Theorem \ref{th2.6} (i). Since $ \dim L/Z(L)=(0,2), $ it is a contradiction. Hence $ b(L)=(2,0). $ \\
  Let $ \dim [L,L]=(3,0) $ and $ \dim L/Z(L)=(3,0). $ Then $ b(L)\leq 3 $ and $ L/Z(L)=\langle \overline{v_1}, \overline{v_2}, \overline{v_3} \rangle. $ Hence   $ L=\langle Z(L),   v_1, v_2, v_3 \mid   v_1, v_2, v_3 \in L_{\overline{0}} \rangle$ and so there does not exist $ x\in L $ such that $ [x, x]\neq 0. $ If $ b(L)=(3,0), $ then  $4\leq \dim L/Z(L) $ by using Theorem \ref{th2.6} (ii) which a contradiction. If $ b(L)\leq 1, $ then it is contradiction by using Lemma \ref{l0} and Proposition \ref{pr1.1}. Therefore $ b(L)=(2,0).$
   The result follows.  
\end{proof}
\begin{thm}\label{M2}
Let $ L $ be a finite-dimensional nilpotent Lie superalgebra. Then $ b(L)=(0, 2) $ if and only if $ \dim [L, L]=(0, 2). $
\end{thm}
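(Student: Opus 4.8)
The plan is to mirror the structure of the proof of Theorem \ref{M1}, exploiting that the much smaller collection of auxiliary propositions already covers every case once the breadth is forced into the odd part. For the forward implication, suppose $b(L)=(0,2)$. Since $b(L)\neq(0,0)$, Lemma \ref{l0} shows $L$ is non-abelian, so $Z(L)\subset L$; as $L$ is nilpotent, $Z(L)\neq 0$ and $L$ possesses a maximal abelian ideal $A$. Because $b_A(L)\leq b(L)=(0,2)$, only three possibilities arise: $b_A(L)=(0,0)$, $(0,1)$, or $(0,2)$. First I would dispose of $b_A(L)=(0,0)$ immediately: Proposition \ref{pr3.3} asserts that no nilpotent Lie superalgebra with $b(L)=(0,2)$ admits a maximal abelian ideal with $b_A(L)=(0,0)$, so this case does not occur. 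For $b_A(L)=(0,1)$, Proposition \ref{pro.131} gives $\dim[L,L]=(0,2)$ directly, and for $b_A(L)=(0,2)$, Proposition \ref{pr3.16} (with $r+s=2$ and $r=0$) likewise yields $\dim[L,L]=(0,2)$. This settles the forward direction.

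For the converse, assume $\dim[L,L]=(0,2)$. The main structural observation is that $\dim[L,L]=(0,2)$ forces $[L_{\overline{0}},L_{\overline{0}}]=[L_{\overline{1}},L_{\overline{1}}]=0$ and $\dim[L_{\overline{0}},L_{\overline{1}}]=2$, so every homogeneous bracket lands in $L_{\overline{1}}$. Consequently, for each $x\in L$ the image of $ad_x$ is contained in $L_{\overline{1}}$, whence $b(x)=(0,t)$ with $t\leq\dim[L,L]=2$; taking the maximum gives $b(L)=(0,s)$ for some $s\leq 2$. It remains to pin down $s$. Since $[L,L]\neq 0$, $L$ is non-abelian and Lemma \ref{l0} gives $b(L)\neq(0,0)$. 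Moreover, $\dim[L,L]=(0,2)$ has total dimension $2$, so by Proposition \ref{pr1.1} the breadth cannot have total value $1$, ruling out $b(L)=(0,1)$. Therefore $s=2$ and $b(L)=(0,2)$, completing the equivalence.

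The hard part, conceptually, is not in this theorem itself but in the supporting propositions it invokes; once Proposition \ref{pr3.3} eliminates the central case and Propositions \ref{pro.131} and \ref{pr3.16} handle the remaining ones, the argument is essentially bookkeeping over the three values of $b_A(L)$. The only subtle point to verify carefully is that the parity constraint in the converse, namely that $\dim[L,L]=(0,2)$ genuinely forces all homogeneous brackets into $L_{\overline{1}}$, is applied correctly, since it is precisely this observation that prevents $b(L)$ from acquiring an even component and thereby being $(1,1)$ or $(2,0)$.
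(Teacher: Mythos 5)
Your proposal is correct and follows essentially the same route as the paper: the forward direction is the identical three-way case analysis on $b_A(L)\in\{(0,0),(0,1),(0,2)\}$ via Propositions \ref{pr3.3}, \ref{pro.131} and \ref{pr3.16}, and your converse (bounding $b(L)$ by $\dim[L,L]$, excluding totals $0$ and $1$ via Lemma \ref{l0} and Proposition \ref{pr1.1}, and noting that $\dim[L,L]=(0,2)$ forces all images of $ad_x$ into $L_{\overline{1}}$) is exactly the argument the paper invokes by reference to Theorem \ref{M1}, with the parity observation usefully made explicit.
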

\begin{proof}
Similar to the proof of Theorem \ref{M1}, there exists a maximal abelian ideal $ A. $ Then $ b_{A}(L)\leq b(L). $ If $ b_{A}(L)=(0,0), $ then there does not exist such a Lie superalgebra $ L $  by using Proposition \ref{pr3.3}. \\
Let $  b_{A}(L)=(0,1).$ Then $ \dim [L,L]=(0,2) $
  by using Proposition \ref{pro.131}. \\
Let $ b_{A}(L)=(0,2). $ Then $ \dim [L, L]=(0,2) $ by using Proposition \ref{pr3.16}.\\
  The proof of converse is obtained by using a similar way  in the proof of Theorem \ref{M1}.
\end{proof}
\begin{thm}\label{M3}
Let $ L $ be a finite-dimensional nilpotent Lie superalgebra. Then $ b(L)=(1, 1) $ if and only if one of the following conditions holds. 
\begin{itemize}
\item[(i).] $ \dim [L,L]=(1,1), $
\item[(ii).] $ \dim [L, L]=(1,2), $   $ \dim L/Z(L)=(1,2) $ and there is no element $ w\in L $ such that $ [w,w]\neq 0.$   
\end{itemize}   
\end{thm}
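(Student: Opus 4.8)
The plan is to follow the same structural template used for Theorem \ref{M1}, splitting into the forward and backward directions and reducing everything to the already-proved propositions about $b_A(L)$. First I would establish that a maximal abelian ideal $A$ exists: since $b(L)=(1,1)\neq(0,0)$, Lemma \ref{l0} gives that $L$ is non-abelian, and nilpotency forces $Z(L)\neq 0$, so such an $A$ exists and $b_A(L)\leq b(L)=(1,1)$. This leaves finitely many cases for $b_A(L)$, namely $(0,0)$, $(r,s)$ with $r+s=1$, and $(1,1)$, and each is handled by a prior result.

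For the forward direction I would run the case analysis on $b_A(L)$. If $b_A(L)=(0,0)$, Proposition \ref{pr3.6} yields directly either $\dim[L,L]=(1,1)$ (conclusion (i)) or the triple conclusion $\dim[L,L]=(1,2)$, $\dim L/Z(L)=(1,2)$ with no $w\in L$ having $[w,w]\neq 0$ (conclusion (ii)). If $b_A(L)=(r,s)$ with $r+s=1$, Proposition \ref{pr3.14} gives either $\dim[L,L]=(1,1)$, or the bound $\dim A/Z(L)=(r,s)$ with $r+s=1$ and $\dim L/Z(L)\leq 3$; in the latter subcase I would mimic the dimension-counting argument from Theorem \ref{M1}, enumerating the possible gradings of $L/Z(L)$ of dimension at most $3$ and using $\dim[L_{\bar0},L_{\bar0}]=0$ (Lemma \ref{lem3.3}) together with Proposition \ref{pr1.1} to rule out the low-dimensional quotients, eventually forcing $\dim[L,L]=(1,1)$. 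Finally, if $b_A(L)=(1,1)$, Proposition \ref{pr3.16} immediately gives $\dim[L,L]=(1,1)$.

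For the converse I would argue as in Theorem \ref{M1}: in case (i), since $b(L)\leq\dim[L,L]=(1,1)$ we get $b(L)\leq 2$, and Lemma \ref{l0} and Proposition \ref{pr1.1} exclude $b(L)=(0,0)$ and superbreadth one; the grading then pins $b(L)=(1,1)$. In case (ii), $\dim[L,L]=(1,2)$ gives $b(L)\leq 3$, the same two results exclude superbreadth $0$ and $1$, and the hypothesis that no $w$ satisfies $[w,w]\neq 0$ lets me invoke Theorem \ref{th2.6}(ii): if $b(L)$ had total size $3$ we would need $\dim L/Z(L)\geq 4$, contradicting $\dim L/Z(L)=(1,2)$, so $b(L)=(1,1)$.

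The main obstacle I expect is the middle subcase of the forward direction, where $b_A(L)$ has breadth one and Proposition \ref{pr3.14} only bounds $\dim L/Z(L)\leq 3$ rather than delivering a clean conclusion. There I must carefully enumerate the $\mathbb{Z}_2$-graded quotients $L/Z(L)$ of dimension $1$, $2$, and $3$ and eliminate each one that does not force $\dim[L,L]=(1,1)$, being attentive to which brackets vanish because $\dim[L_{\bar0},L_{\bar0}]=0$ and because the even-odd bracket structure is constrained by $b(L)=(1,1)$. The bookkeeping here parallels the $(2,0)$ computation in Theorem \ref{M1} but with the mixed grading, so the risk is in correctly tracking the parity of the relevant brackets rather than in any deep new idea.
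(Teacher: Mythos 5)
Your overall architecture coincides with the paper's: produce a maximal abelian ideal $A$ from nilpotency, split on $b_A(L)\in\{(0,0),\ (r,s)\ \text{with}\ r+s=1,\ (1,1)\}$, dispatch the first and last cases by Propositions \ref{pr3.6} and \ref{pr3.16}, reduce the middle case via Proposition \ref{pr3.14} to an enumeration of the graded quotients $L/Z(L)$ of dimension at most $3$, and prove the converse exactly as the paper does, with $b(L)\leq \dim[L,L]$, Lemma \ref{l0}, Proposition \ref{pr1.1}, and Theorem \ref{th2.6}(ii) (where the hypothesis that no $w$ satisfies $[w,w]\neq 0$ forces $\dim L/Z(L)\geq 4$, contradicting $\dim L/Z(L)=(1,2)$). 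All of that is faithful to the paper's proof.

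The one genuine flaw is in your middle case: you propose to use Lemma \ref{lem3.3} to conclude $\dim[L_{\overline{0}},L_{\overline{0}}]=0$. That lemma's hypothesis is $b_A(L)=(0,0)$, which fails precisely in this case (here $b_A(L)$ has total breadth $1$), and its conclusion is false in general when $b(L)=(1,1)$: the even component of the superbreadth can be produced by $[L_{\overline{0}},L_{\overline{0}}]$ just as well as by $[L_{\overline{1}},L_{\overline{1}}]$ (take a Heisenberg even part acting nontrivially on an odd module; then $b(L)=(1,1)$ with $[L_{\overline{0}},L_{\overline{0}}]\neq 0$), and conclusion (i) of the theorem imposes no such vanishing. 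What the paper actually extracts from $b(L)=(1,1)$ in this enumeration is the parity constraint in the other direction: some $ad_x$ has one-dimensional odd image, so $\dim[L_{\overline{0}},L_{\overline{1}}]\neq 0$, and one of $\dim[L_{\overline{0}},L_{\overline{0}}]$, $\dim[L_{\overline{1}},L_{\overline{1}}]$ is nonzero. It is this nonvanishing of the mixed bracket that kills the quotient shapes $(2,0)$, $(0,2)$, $(3,0)$, $(0,3)$, while $(2,1)$ and $(1,2)$ are handled by exhibiting elements of superbreadth $3$ among the surviving brackets, leaving only $\dim[L,L]=(1,1)$. As written, your assumption would mis-enumerate the bracket possibilities (for instance, it would wrongly delete $[v_1,v_2]$ in the $(2,1)$ quotient case); replacing the appeal to Lemma \ref{lem3.3} by this parity bookkeeping restores agreement with the paper's argument.
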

\begin{proof}
By using a similar way is used in the proof of  Theorem \ref{M1}, there exists a maximal abelian ideal   $ A. $  Since $ b_{A}(L)\leq b(L) $ and  $ b(L)=(1,1), $ we have the following cases.\\
Case 1. If $ b_{A}(L)=(0,0), $ then $ \dim[L, L]=(1,1) $  or  $ \dim[L, L]=(1,2), $ $ \dim L/Z(L)=(1,2) $ and there is no element $ w\in L $ such that $ [w,w]\neq 0,$ 
or $ \dim [L,L]=(1,1)$ by using Proposition \ref{pr3.6}. \\
Case 2. Assume that $ b_{A}(L)=(r,s) $ such that $ r+s=1. $  Then $ \dim [L,L]=(1,1) $ or $ \dim L/Z(L)\leq 3 $ by using Proposition \ref{pr3.14}. If $ \dim L/Z(L)$ is equal to $(1,0) $ or $ (0,1), $ then $ \dim [L,L]\leq 1 $ and so it is contradiction by using Lemma \ref{l0} and Proposition \ref{pr1.1}. \\
Let $ \dim L/Z(L)=2. $ Then $ \dim L/Z(L) $ is equal to $ (2,0), $ $ (0,2) $ or $ (1,1). $ Since $ b(L)=(1,1), $ we have $ \dim [L_{\overline{0}}, L_{\overline{0}}] $ or $ \dim [L_{\overline{1}}, L_{\overline{1}}] $ and  $ \dim [L_{\overline{0}}, L_{\overline{1}}]  $ are non-zero. When $ \dim L/Z(L)=(2,0) $ and $ L/Z(L)=\langle  \overline{v_1}, \overline{v_2} \rangle,  $ we have
$ L=\langle Z(L), v_1, v_2 \mid v_1, v_2 \in L_{0} \rangle.$
 Hence $ \dim [L_{\overline{0}}, L_{\overline{1}}]=0 $ which is a  contradiction. If $ \dim L/Z(L)=(0,2), $ then we get a contradiction by using a similar way. Consider $ \dim L/Z(L)=(1,1) $ and $ L/Z(L)=\langle \overline{v_1}, \overline{v_2} \rangle.$ Then 
$ L=\langle Z(L), v_1, v_2 \mid v_1\in L_{\overline{0}}, v_2 \in L_{\overline{1}} \rangle.$
 Thus $ \dim [L,L]=(1,1). $ \\
Let $ \dim L/Z(L)=3 $ and $ L/Z(L)=\langle   \overline{v_1}, \overline{v_2}, \overline{v_3} \rangle. $ Then $ \dim L/Z(L) $ is equal to $ (3,0), $ $ (0,3), $ $ (2,1) $ or $ (1,2). $ If $ \dim L/Z(L)=(3,0), $  then
$ L=\langle Z(L), v_1, v_2, v_3\mid v_1, v_2, v_3\in L_{\overline{0}} \rangle. $
 Hence $ \dim [L_{\overline{0}}, L_{\overline{1}}]=0  $ and it is a contradiction. 
By using a similar method if $ \dim L/Z(L)=(0,3), $ then we get a contradiction.\\
Assume that $ \dim L/Z(L)=(2,1). $  Then 
$ L=\langle Z(L), v_1, v_2, v_3\mid  v_1,v_2\in L_{\overline{0}},  v_3\in L_{\overline{1}}\rangle $ 
and so $ \dim [L,L] \leq 4. $ If $ \dim [L,L]=4, $ then $ b(v_3)=3 $ and it is a contradiction. Let $ \dim [L,L]=3. $ 
Since $ b(L)=(1,1), $  $ \dim [L,L] $ is equal to $ (2,1) $ or $ (1,2). $
On the other hand,  $ \lbrace [v_1, v_2], [v_1, v_3], [v_2, v_3], [v_3,v_3] \rbrace $ is all brackets. If $ \dim [L,L]=(2,1), $ then 
$ \lbrace [v_1, v_2], [v_3,v_3] \rbrace \subseteq L_{\overline{0}} $ is linearly independent and $ \lbrace [v_1, v_3], [v_2, v_3] \rbrace \subseteq L_{\overline{1}}$ is linearly dependent.
Now, if $ [v_1, v_3]\neq 0$ and $ [v_2, v_3]=0 $ or  $ [v_1, v_3]=[v_2, v_3]\neq 0, $ then $ b(v_1+v_3)=3. $ It is a contradiction.  If $ [v_1, v_3]=0$ and $ [v_2, v_3]\neq 0, $ then $ b(v_2+v_3)=3 $ which is a contradiction. 
In the case  $ \dim [L,L]=(1,2) $  we have a contradiction by using a similar way. Thus this case does not occur.
 \\
Let $ \dim L/Z(L)=(1,2) $. Then
$ L=\langle Z(L), v_1, v_2, v_3 \mid   v_1\in L_{\overline{0}}, v_2, v_3 \in L_{\overline{1}} \rangle.$
   Hence $ \dim [L,L]\leq 5. $
 If $ \dim [L,L]=5, $ then $ b(v_2)=4 $ and it is a contradiction. 
 Assume that $ \dim [L,L]=4. $ If $ [v_2, v_3]\neq 0, $ we have $ b(v_1)=3, $ $ b(v_2)=3 $ or $ b(v_3)=3 $ and since $ b(L)=(1,1), $ it is a contradiction. Hence  $ [v_2, v_3]= 0. $ Thus $ b(v_2+v_3)=3 $ which is a contradiction. Hence this case does not occur.\\
  Let $ \dim [L,L]=3. $ Then $ \dim [L,L] $ is equal to $ (2,1) $ or $ (1,2). $ If $ \dim [L,L]=(2,1), $ then by considering different  cases, we have $ b(v_1+v_2+v_3)=3, $ $ b(v_1+v_2)=3 $ or $ b(v_1+v_2)=3. $ It is a contradiction. By using a similar way if $ \dim [L,L]=(1,2), $ we get a contradiction. 
Therefore $ \dim [L,L]=(1,1). $
\\
Case 3. Let $ b_{A}(L)=(1,1). $ Then $ \dim [L,L]=(1,1) $ by using Proposition \ref{pr3.16}.\\
Conversely, let $ \dim [L,L]=(1,1). $ We know that $ b(L)\leq  \dim [L,L],$ hence $ b(L)\leq 2. $ Also, $ b(L)$ is not equal to $ 0 $ and $ 1 $ by using Lemma \ref{l0} and Proposition \ref{pr1.1}. Thus $ b(L)=(1,1). $\\
 If $ \dim [L,L]=(1,2) $ and $ \dim L/Z(L)=(1,2) $ and there is no element $ w\in L $ such that $ [w,w]\neq 0,$ then $ b(L)\leq 3. $ 
By using Lemma \ref{l0} and Proposition \ref{pr1.1}  $ b(L)$  is not equal to $ 0 $ and $ 1. $ Consider $ b(L)=3. $ Since 
 there is no element $ w\in L $ such that $ [w,w]\neq 0$ and  $ b(L)=3, $ we have  $ 4\leq \dim L/Z(L) $ by using Theorem \ref{th2.6} (ii). It is a contradiction. 
Hence $ b(L)=2. $  On the other hand,  $ \dim [L, L]=(1,2)$  thus $ b(L)=(1,1), $ we required. 
\end{proof}

\end{document}